\documentclass[preprint]{elsarticle} 
\usepackage{amssymb,latexsym,amsmath,amscd,graphicx,setspace,amsthm,verbatim,float}
\usepackage[margin = 3 cm]{geometry}

\usepackage{mathtools}
\usepackage{tikz}
\usetikzlibrary{chains}

\tikzset{node distance=2em, ch/.style={circle,draw,on chain,inner sep=2pt},chj/.style={ch,join},every path/.style={shorten >=4pt,shorten <=4pt},line width=1pt,baseline=-1ex}

\let\dlabel=\alabel

\newcommand{\dnode}[2][chj]{%
\node[#1,label={below:\dlabel{#2}}] {};
}

\newcommand{\dnodenj}[1]{%
\dnode[ch]{#1}
}

\newcommand{\dnodebr}[1]{%
\node[chj,label={below right:\dlabel{#1}}] {};
}

\newcommand{\dydots}{%
\node[chj,draw=none,inner sep=1pt] {\dots};
}

\input xy
\xyoption{all}

\theoremstyle{plain}

\newtheorem{The}{Theorem}[section]
\newtheorem{Pro}[The]{Proposition}
\newtheorem{Cor}[The]{Corollary}
\newtheorem{Lem}[The]{Lemma}

\theoremstyle{remark}

\theoremstyle{definition}
\newtheorem{Def}[The]{Definition}
\def\cF{\mathcal{F}}
\def\cW{\mathcal{W}}
\def\cS{\mathcal{S}}
\def\cP{\mathcal{P}}
\def\Z{\mathbb{Z}}

\begin{document} 

\title{Weyl groups of some hyperbolic Kac-Moody algebras}

\author[ajf]{Alex J. Feingold\corref{cor1}}
    \ead{alex@math.binghamton.edu}
\author[dv]{Daniel Valli\`{e}res}
    \ead{dvallieres@csuchico.edu}
\cortext[cor1]{Corresponding author} 
\address[ajf]{Department of Mathematical Sciences, Binghamton University, State University of New York, 
Binghamton, New York 13902-6000}
\address[dv]{Department of Mathematics and Statistics, California State University - Chico, 400 West First Street, Chico, California 95929}


\begin{abstract} 
We use the theory of Clifford algebras and Vahlen groups to study Weyl groups of hyperbolic Kac-Moody algebras $T_{n}^{++}$, obtained by a process of double extension from a Cartan matrix of finite type $T_{n}$, whose corresponding generalized Cartan matrices are symmetric.
\end{abstract} 
\begin{keyword}Weyl group\sep hyperbolic Kac-Moody algebra\sep Clifford algebra\sep Vahlen group
\MSC 20F55 (Primary)\sep 11E88\sep 20H25\sep 17B67 (Secondary)
\end{keyword}
\maketitle 
\tableofcontents

\begin{section}{Introduction}

In \cite{FeFr83}, Feingold and Frenkel gained significant new insight into the structure of a particularly interesting rank $3$ hyperbolic Kac--Moody algebra which they called $\cF$ (also known as $A_{1}^{++}$), along with some connections to the theory of Siegel modular forms of genus $2$.  The first vital step in their work was the discovery that the even part of the Weyl group of that Kac-Moody algebra is $\cS\cW(\cF) \cong PSL(2,\Z)$.  (If $\cW$ is a Weyl group, we will denote its even part by $\cS\cW$.)

In \cite{FeKlNi09}, a coherent picture of Weyl groups was presented for many higher rank hyperbolic Kac--Moody algebras using lattices and subrings of the four normed division algebras. Specifically, the Weyl groups of all hyperbolic algebras of ranks $4$, $6$ and $10$ which can be obtained by a process of double extension, admit realizations in terms of generalized modular groups over the complex numbers $\mathbb{C}$, the quaternions $\mathbb{H}$, and the octonions $\mathbb{O}$, respectively.  In particular, the authors found in the rank four situation that the even part of the Weyl groups of the Kac-Moody algebra $A_{2}^{++}$ is the Bianchi group $PSL(2,O_{-3})$, where $O_{-3}$ the ring of integers of $\mathbb{Q}(\sqrt{-3})$. 

One could ask if there is a similar phenomenon for all the hyperbolic Kac-Moody algebras $T_{n}^{++}$, where $T_{n}$ is any finite type root system, but it is not clear what to take instead of a normed division algebra.  In \cite{KlNiPa}, the authors used the quaternions and the octonions in their study of some Weyl groups.  In this paper, we adopt another approach.  We use the theory of Vahlen groups and Clifford algebras in order to study the Weyl groups of the hyperbolic Kac-Moody algebras $T_{n}^{++}$ whose Cartan matrices are symmetric. A key ingredient needed to obtain our main result in Theorem \ref{main} is 
Corollary $5.10$ of \cite{Ka90}, which only applies to that class of Cartan matrices. We plan to study more general cases
where the Cartan matrices are Lorentzian, and believe that our methods will yield interesting results with connections to 
number theory.

Our paper is organized as follows.  In \S \ref{generalities}, we remind the reader about generalities on orthogonal geometries.  Then, we gather some results on Clifford algebras, Pin and Spin groups in \S \ref{clifpinspin}.  Section \S \ref{Vahlen1} introduces Vahlen groups.  In the literature, Vahlen groups have been defined for the paravector case as well as for the non-paravector case.  In \S \ref{Vahlen1}, we place ourselves in the non-paravector case, whereas the paravector situation is treated in \S \ref{paravector} and \S \ref{vahlenpara}.  Section \S \ref{cartanmatrices} contains a useful, though very simple, introduction to generalized Cartan matrices, systems of simple roots and Weyl groups.  The core of this paper in contained in \S \ref{symmetric}, where we give a description of several Weyl groups.  At last, we explain in \S \ref{connections} the connections between our approach and the one adopted previously in \cite{FeKlNi09}.

\vskip10pt
\noindent
{\bf Acknowledgements:}
We would like to thank Igor Frenkel for suggesting this direction of research. AJF gratefully acknowledges the hospitality 
of the Albert Einstein Institute on various visits, and the IH\'ES.  We would also like to thank Joel Dodge for useful 
discussions at the beginning of this project, and we thank the referee for interesting suggestions for further research.

\end{section}

\section{Generalities on orthogonal geometries} \label{generalities}
\emph{Throughout this paper, $F$ will denote a field with characteristic different from $2$.  In fact, all the fields considered in this paper have characteristic zero.}  The results of this section are well-known and we will not repeat the proofs.  We refer the reader to \cite{Ar1988} and \cite{Di1971}.

Let $V$ be a finite dimensional $F$-vector space of dimension $n$.  If $V$ is equipped with a symmetric $F$-bilinear form $S$, then we say that $(V,S)$ is an orthogonal geometry.  If $S$ is clear from the context, we will refer to an orthogonal geometry just by $V$.  Instead of working with the symmetric $F$-bilinear form $S$, one can work with the associated quadratic form given by $q(v) = S(v,v)$ for all $v \in V$.  A pair $(V,q)$, where $V$ is a finite dimensional vector space over $F$ and $q$ is a quadratic form on $V$ is called a quadratic space.  We have a one-to-one correspondence between symmetric $F$-bilinear forms $S$ and quadratic forms $q$.  Given a quadratic form $q$, one recovers $S$ via the formula
$$S(v_{1},v_{2}) = \frac{1}{2}\left(q(v_{1} + v_{2}) - q(v_{1}) - q(v_{2}) \right).$$

Given an orthogonal geometry $V$, the radical of $V$, denoted by ${\rm Rad}(V)$, is defined as usual, i.e. it is the kernel of the linear transformation $V \longrightarrow V^{*}$ defined by $v \mapsto S(v, \cdot)$.  
\begin{Def}
Let $V$ be an orthogonal geometry.  Then $V$ is called non-singular if ${\rm Rad}(V) = 0$ and isotropic if ${\rm Rad}(V) = V$.  A vector $v \in V$ is called isotropic if $q(v) = 0$, otherwise non-isotropic.
\end{Def}
An orthogonal geometry $V$ is isotropic if and only if every vector $v \in V$ is isotropic.

Let $(V_{1},S_{1})$ and $(V_{2},S_{2})$ be two orthogonal geometries.  A linear transformation $f:V_{1} \longrightarrow V_{2}$ is called an orthogonal map if $S_{2}(f(v_{1}),f(v_{1}')) = S_{1}(v_{1},v_{1}')$ for all $v_{1},v_{1}' \in V_{1}$.  An orthogonal map $f:V_{1}\longrightarrow V_{2}$ is called an isometry if there exists an orthogonal map $g:V_{2}\longrightarrow V_{1}$ satisfying $f \circ g = id_{V_{2}}$ and $g \circ f = id_{V_{1}}$.  Note that if $f:V_{1} \longrightarrow V_{2}$ is a bijective orthogonal map, then it is an isometry.  More generally, an $F$-linear transformation $f:V_{1} \longrightarrow V_{2}$ is called an orthogonal similitude if there exists $\lambda \in F^{\times}$ such that $S_{2}(f(v_{1}),f(v_{1}')) = \lambda S_{1}(v_{1},v_{1}')$ for all $v_{1},v_{1}' \in V_{1}$.  The constant $\lambda \in F^{\times}$ is called the factor of similitude of $f$.  It is simple to check that if $f:V_{1} \longrightarrow V_{2}$ is an orthogonal similitude between two orthogonal geometries and $V_{1}$ is non-singular, then $f$ is necessarily injective.  

The set of isometries of an orthogonal geometry $V$ into itself forms a subgroup of the general linear group $GL(V)$ which is denoted by $O(V,S)$ or $O(V)$ if $S$ is understood from the context.  Moreover, we let $GO(V)$ be the group of orthogonal similitudes, that is
$$GO(V) = \{g \in GL(V) \, | \, S(g(v_{1}),g(v_{2})) = \lambda(g) S(v_{1},v_{2}) \text{ for some } \lambda(g) \in F^{\times} \}. $$
Note that the map $\lambda:GO(V) \longrightarrow F^{\times}$ is a group morphism and $O(V) = {\rm ker}(\lambda)$.  If $V$ is an orthogonal geometry over $F$ with symmetric $F$-bilinear form $S$ and $\lambda \in F^{\times}$, then we let $V^{\lambda}$ be the orthogonal geometry obtained from $V$ by rescaling the symmetric $F$-bilinear form $S$ by a factor $\lambda$.  
 
If $V$ is non-singular, then it is simple to check that any $\sigma \in O(V)$ satisfies ${\rm det}(\sigma) = \pm 1$.  The ones satisfying ${\rm det}(\sigma) = 1$ are called rotations and they form a subgroup of $O(V)$ which is denoted by $SO(V,S)$ or more simply by $SO(V)$.  The following result is well known.
\begin{Pro} \label{existenceort}
If $V$ is an orthogonal geometry, then $V = L_{1} \perp \ldots \perp L_{r}$, where $L_{i} = {\rm Span}(v_{i})$ are lines.  Moreover, $V$ is non-singular if and only if $v_{i}$ is a non-isotropic vector for all $i=1,\ldots,r$.
\end{Pro}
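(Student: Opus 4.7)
The plan is to prove both assertions simultaneously by induction on $n = \dim V$. The base cases $n = 0$ (empty decomposition) and $n = 1$ (take $L_1 = V$) are immediate, and in the latter case $V$ is non-singular precisely when its single basis vector is non-isotropic.

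For the inductive step, the first move is a dichotomy: either $V$ contains some non-isotropic vector, or $q$ vanishes identically on $V$. In the latter situation the polarization identity
$$S(v_{1},v_{2}) = \frac{1}{2}\left(q(v_{1} + v_{2}) - q(v_{1}) - q(v_{2}) \right)$$
forces $S \equiv 0$, so any basis $v_{1},\ldots,v_{n}$ furnishes the orthogonal decomposition trivially. Note that in this case $V$ is isotropic (hence not non-singular, unless $n=0$), and correspondingly every $v_i$ is isotropic, which is consistent with the biconditional.

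In the non-trivial case, pick a non-isotropic $v_1 \in V$ and set $L_1 = {\rm Span}(v_1)$. Using that $q(v_1) \neq 0$, I would check that the map $\pi : V \to V$ given by $\pi(v) = v - \frac{S(v,v_1)}{q(v_1)} v_1$ lands in $L_1^{\perp}$ and that this yields the orthogonal direct sum decomposition $V = L_1 \perp L_1^{\perp}$. Applying the inductive hypothesis to $L_1^{\perp}$, which has dimension $n-1$, produces the desired decomposition $V = L_1 \perp L_2 \perp \ldots \perp L_r$.

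Finally, for the biconditional: if every $v_i$ is non-isotropic, then the Gram matrix of $S$ with respect to the orthogonal basis $\{v_1,\ldots,v_r\}$ is diagonal with entries $q(v_i) \neq 0$, so its determinant is nonzero and $V$ is non-singular. Conversely, if some $v_i$ is isotropic, then $v_i$ is orthogonal both to every $v_j$ with $j \neq i$ (by the orthogonality of the decomposition) and to itself (since $q(v_i) = 0$), hence $v_i \in {\rm Rad}(V)$, so $V$ is singular. The main obstacle, such as it is, is really just the careful bookkeeping around the splitting $V = L_1 \oplus L_1^{\perp}$, which rests on the non-degeneracy of $S$ restricted to the non-singular line $L_1$; everything else is formal.
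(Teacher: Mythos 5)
Your proof is correct, and it is the standard inductive argument (pick a non-isotropic vector if one exists, split off the line it spans, and recurse; handle the totally isotropic case via polarization). The paper itself omits the proof for this proposition, deferring to the references \cite{Ar1988} and \cite{Di1971}, and your argument is essentially the one found in those texts.
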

The set $\{v_{1},\ldots,v_{r} \}$ of the last proposition is called an orthogonal basis.  We will now recall the definition of some important isometries in $O(V)$.  An isometry $\sigma \in O(V)$ is called an involution if $\sigma^{2} = 1$.  If $\sigma$ is an involution, then we let
$$U = \left(\frac{1 - \sigma}{2}\right)  V \qquad\hbox{and}\qquad W = \left(\frac{1 + \sigma}{2}\right)  V. $$
(Recall that we are staying away from characteristic $2$.)  It is then a simple matter to show that $V = U \perp W$ and $\sigma = -id_{U} \perp id_{W}$.  The dimension of $U$ is called the type of $\sigma$.  An involution of type $1$ is called a symmetry with respect to the hyperplane $W$ or less precisely an hyperplane reflection or even more simply a reflection. 

If $\sigma = -id_{L} \perp id_{H}$ is an hyperplane reflection and $v \in L$ is a non-zero vector, then $v$ is a non-isotropic vector, since $L$ is non-singular.  On the other hand, if we start with a non-isotropic vector $v \in V$, then it is simple to check that $r_{v} \in O(V)$ given by
$$r_{v}(w) = w - 2 \cdot \frac{ S(w,v)}{S(v,v)}v, $$
whenever $w \in V$, is a symmetry with respect to the hyperplane $L^{\perp}$ where $L = {\rm Span}(v)$.  Conversely, every hyperplane reflection $-id_{L} \perp id_{H}$ is of the form $r_{v}$ for some non-isotropic vector $v \in L$.  Theorem \ref{CartanDieudonne} below is fundamental, but we first need the following lemma whose proof is left to the reader.

\begin{Lem} \label{goto}
Let $V$ be an orthogonal geometry and let $v,w \in V$.  If $q(v) = q(w) \neq 0$, then either
\begin{enumerate}
\item $q(v-w) \neq 0$, \label{unone}
\item $q(v+w) \neq 0$. \label{deuxtwo}
\end{enumerate}
In case (\ref{unone}), we have $r_{v-w}(v) = w$, and in case (\ref{deuxtwo}), we have $r_{w} \circ r_{v + w}(v) = w$.
\end{Lem}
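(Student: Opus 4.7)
The plan is to first establish the dichotomy, and then verify each of the two formulas by a direct computation with the reflection formula $r_{u}(x) = x - 2\frac{S(x,u)}{S(u,u)}u$.

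For the dichotomy, the key identity is the parallelogram-type relation
\[
q(v-w) + q(v+w) = 2q(v) + 2q(w) = 4q(v),
\]
which follows by expanding $S(v\pm w, v\pm w)$. Since $q(v) \neq 0$ and the characteristic is not $2$, the right-hand side is nonzero, so at least one of $q(v-w)$ and $q(v+w)$ must be nonzero. This is what gives the ``either\ldots or'' statement.

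For case (\ref{unone}), assume $q(v-w) \neq 0$ so that $r_{v-w}$ is defined. Expanding gives
\[
S(v-w, v-w) = 2q(v) - 2S(v,w) = 2\bigl(S(v,v) - S(v,w)\bigr) = 2 S(v, v-w).
\]
Hence the coefficient $2 S(v, v-w)/S(v-w, v-w)$ appearing in $r_{v-w}(v)$ equals $1$, and the formula collapses to $r_{v-w}(v) = v - (v-w) = w$. The analogous computation in case (\ref{deuxtwo}) shows that $2S(v, v+w)/S(v+w, v+w) = 1$, so $r_{v+w}(v) = v - (v+w) = -w$. Applying $r_{w}$ (which sends $w$ to $-w$ and hence $-w$ to $w$, since $w$ is non-isotropic by hypothesis) yields $r_{w}\circ r_{v+w}(v) = w$.

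There is no real obstacle here; the whole argument is an elementary bilinear computation once the parallelogram identity is written down. The only minor point to watch is the need for $q(w) \neq 0$ when applying $r_{w}$ in case (\ref{deuxtwo}), but this is guaranteed by the hypothesis $q(w) = q(v) \neq 0$.
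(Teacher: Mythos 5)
Your proof is correct, and the paper in fact leaves this lemma's proof to the reader, so there is nothing to compare against. The parallelogram identity $q(v-w)+q(v+w)=4q(v)$ for the dichotomy and the observation that $S(v\pm w, v\pm w)=2S(v,v\pm w)$ (so that the reflection coefficient collapses to $1$) is the standard argument, and you have handled the minor point about $r_w$ being well-defined.
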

We can now show:
\begin{The} \label{CartanDieudonne}
Let $V$ be a non-singular orthogonal geometry.  Then, every $\sigma \in O(V)$ is a product of hyperplane reflections.
\end{The}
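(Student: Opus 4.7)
The plan is to prove Theorem \ref{CartanDieudonne} by induction on $n = \dim V$. The base case $n = 1$ is immediate: since $V$ is non-singular, $O(V) = \{\pm id_V\}$, and both elements are products of hyperplane reflections ($-id_V$ is itself a reflection, and $id_V = r_v \circ r_v$ for any non-isotropic $v \in V$).

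For the inductive step, I would assume the theorem for all non-singular orthogonal geometries of dimension strictly less than $n$, take $\sigma \in O(V)$, and use Proposition \ref{existenceort} to pick a non-isotropic vector $v \in V$. Setting $w = \sigma(v)$, the isometry property gives $q(w) = q(v) \neq 0$, so Lemma \ref{goto} applies and produces an isometry $\tau$ sending $v$ to $w$, where $\tau$ is either the single reflection $r_{v-w}$ or the two-fold product $r_{w} \circ r_{v+w}$. In either case $\tau$ is a product of hyperplane reflections and $\tau^{-1} \circ \sigma$ fixes $v$.

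Next I would reduce to the hyperplane $H = v^{\perp}$. Because $v$ is non-isotropic, $V = \mathrm{Span}(v) \perp H$ and $H$ is a non-singular orthogonal geometry of dimension $n-1$. Since $\tau^{-1} \circ \sigma$ fixes $v$, it preserves $H$, and the restriction $\sigma' = (\tau^{-1} \circ \sigma)|_H$ lies in $O(H)$. By the inductive hypothesis, $\sigma' = r_{u_1} \circ \cdots \circ r_{u_k}$ for non-isotropic vectors $u_i \in H$. Each $r_{u_i}$, viewed inside $V$ rather than $H$, is still the hyperplane reflection in $V$ associated to $u_i$: the defining formula is insensitive to the ambient space, and it acts as the identity on $\mathrm{Span}(v) \subset u_i^{\perp}$. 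Hence $\tau^{-1} \circ \sigma = r_{u_1} \circ \cdots \circ r_{u_k}$ as elements of $O(V)$, and $\sigma = \tau \circ r_{u_1} \circ \cdots \circ r_{u_k}$ is a product of hyperplane reflections.

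The proof is essentially routine once Lemma \ref{goto} is in hand; the only subtle point, and really the heart of the argument, is the existence of the auxiliary isometry $\tau$ mapping $v$ to $w$ as a product of at most two reflections. The two cases of the lemma are needed precisely because $q(v-w)$ can vanish (for instance when $v = w$, in which case one falls back on the trivial product $r_v \circ r_v$ via the second case). No special attention is needed for isotropic directions in $V$, since $v$ was chosen non-isotropic from the start, and no issue arises with extending reflections from $H$ to $V$ because of the orthogonal decomposition $V = \mathrm{Span}(v) \perp H$.
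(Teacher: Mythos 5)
Your proof is correct and rests on the same two ingredients the paper uses, namely Proposition \ref{existenceort} to produce a non-isotropic vector and Lemma \ref{goto} to move $\sigma(v)$ back to $v$ by at most two reflections. The paper presents the argument as an explicit iteration over an orthogonal basis (constructing $\psi_1,\ldots,\psi_n$ so that $\psi_i\cdots\psi_1\sigma$ fixes $e_1,\ldots,e_i$, working entirely inside $V$) rather than your induction on dimension with restriction to $H=v^{\perp}$, but these are just the unrolled and rolled versions of the same argument.
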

\begin{proof}
Let $\sigma \in O(V)$.  By Proposition \ref{existenceort}, we know that $V = Fe_{1} \perp \ldots \perp Fe_{n}$ for some non-isotropic vectors $e_{i} \in V$.  Define $\psi_{i} \in O(V)$ inductively as follows:
\begin{equation*}
\psi_{i}=
\begin{cases}
r_{\psi_{i-1}\cdot \ldots \cdot \psi_{1} \cdot \sigma(e_{i}) - e_{i}}, & \text{if } q(\psi_{i-1}\cdot\ldots\psi_{1}\cdot \sigma(e_{i}) - e_{i}) \neq 0; \\
r_{e_{i}} \circ r_{\psi_{i-1}\cdot \ldots \cdot \psi_{1} \cdot \sigma(e_{i}) + e_{i}}, & \text{otherwise}.
\end{cases}
\end{equation*}
One checks using Lemma \ref{goto} that $\psi_{i}\psi_{i-1} \ldots \psi_{1} \sigma(e_{j}) = e_{j}$ for all $j=1,\ldots,i$.  It follows that $\sigma$ is a product of hyperplane reflections, and this is what we wanted to show.
\end{proof}

Assume now that $F$ is an ordered field, so that we can talk about the signature of an orthogonal geometry.  If $(V,S_{V})$ and $(W,S_{W})$ are two non-singular orthogonal geometries having the same signature, then they are not necessarily isometric in general.  But if every positive element of $F$ is a square in $F$ (for instance, if $F = \mathbb{R}$), then the possible signatures $(r,s)$ are in bijection with the isometry classes of non-singular orthogonal geometries.  In other words, if two non-singular orthogonal geometries $(V,S_{V})$ and $(W,S_{W})$ have the same signature, then they are isometric.

\section{Clifford algebras, Pin and Spin groups} \label{clifpinspin}
\emph{By an $F$-algebra $\mathcal{A}$, we always mean a unital associative $F$-algebra and its unit element will be denoted by $1_{\mathcal{A}}$.  By a morphism of $F$-algebras, we always mean a morphism of unital associative $F$-algebras.  Throughout this section, $V$ will stand for a non-singular orthogonal geometry.  The symmetric $F$-bilinear form is denoted by $S$ and the associated quadratic form by $q$.}  
\begin{Def}
A universal Clifford algebra for the non-singular orthogonal geometry $V$ is an $F$-algebra $\mathcal{C}$ with an injective $F$-linear map $i:V \hookrightarrow \mathcal{C}$ satisfying
\begin{equation} \label{defClifford}
i(v)^{2} = - q(v) \cdot 1_{\mathcal{C}}
\end{equation}
and such that the following universal property holds true:  Given any $F$-algebra $\mathcal{A}$ and a $F$-linear map $f:V \longrightarrow \mathcal{A}$ satisfying $f(v)^{2} = - q(v) \cdot 1_{\mathcal{A}}$, there exists a unique $F$-algebra morphism $\tilde{f}:\mathcal{C} \longrightarrow \mathcal{A}$ such that $\tilde{f} \circ i = f$.
\end{Def}
We warn the reader that there is no consensus about the negative sign in (\ref{defClifford}).  Moreover, via the monomorphism $i:V \hookrightarrow \mathcal{C}$, we will identify $V$ as a linear subspace of $\mathcal{C}$.  We also identify $F$ with $F \cdot 1_{\mathcal{C}}$.  With these two identifications, the identity (\ref{defClifford}) becomes $v^{2} = -q(v)$.  We also have
\begin{equation} \label{commutatorrelation}
v \cdot w + w \cdot v = - 2S(v,w)
\end{equation}
for all $v,w \in V$ which reduces to (\ref{defClifford}) when $v = w$.  The existence of a universal Clifford algebra is standard and can be realized as a quotient of the tensor algebra $T(V)$.  Let $\{e_{1},\ldots,e_{n} \}$ be an orthogonal basis for $V$.  Then, (\ref{defClifford}) implies 
\begin{equation} \label{square}
e_{i}^{2} = - q(e_{i}),
\end{equation}
for all $i=1,\ldots,n$.  Since $V$ is assumed to be non-singular, we have $q(e_{i}) \neq 0$, and therefore $e_{i} \in \mathcal{C}^{\times}$ for all $i=1,\ldots,n$.  Moreover, (\ref{commutatorrelation}) implies 
\begin{equation} \label{orto}
e_{i} \cdot e_{j} = -e_{j} \cdot e_{i},
\end{equation}
whenever $i \neq j$.  We let $\Omega = \{(i_{1},\ldots,i_{s}) \, | \, 1 \le s \le n  \text{ and } 1 \le i_{1} < \ldots < i_{s} \le n\} \cup \{\varnothing \}$.  Clearly, $|\Omega| = 2^{n}$.  Given $I = (i_{1},\ldots,i_{s}) \in \Omega$, we set $e_{I} = e_{i_{1}} \cdot \ldots \cdot e_{i_{s}} \in \mathcal{C}^{\times}$ and we agree that $e_{\varnothing} = 1_{\mathcal{C}}$.  It is well-known that $\{e_{I} \, | \, I \in \Omega\}$ is a basis for $\mathcal{C}$ considered as an $F$-vector space.  Therefore, ${\rm dim}_{F}(\mathcal{C}) = 2^{n}$.

As a consequence of the universal property satisfied by a universal Clifford algebra, we have the following result.
\begin{The} \label{orthomap}
Let $V_{1}$ and $V_{2}$ be two non-singular orthogonal geometries and let $f:V_{1} \longrightarrow V_{2}$ be an orthogonal map.  If $\mathcal{C}_{i}$ is a universal Clifford algebra for $V_{i}$ for $i=1,2$, then there is a unique $F$-algebra morphism $\tilde{f}:\mathcal{C}_{1} \longrightarrow \mathcal{C}_{2}$ making the following diagram commutative:
\begin{equation*}
\xymatrix{
V_{1}\, \ar[d]^f \ar@{^{(}->}[r] & \mathcal{C}_{1} \ar[d]^{\tilde{f}} \\
V_{2}\, \ar@{^{(}->}[r] & \mathcal{C}_{2}
}
\end{equation*}
\end{The}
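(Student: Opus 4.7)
The plan is to apply the universal property of $\mathcal{C}_{1}$ to the composition $i_{2} \circ f : V_{1} \to \mathcal{C}_{2}$, where $i_{2}:V_{2} \hookrightarrow \mathcal{C}_{2}$ denotes the structural inclusion of the universal Clifford algebra of $V_{2}$. Both existence and uniqueness of $\tilde{f}$ will then follow immediately from the universal property, provided we verify the defining quadratic identity for $i_{2}\circ f$.

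First I would set $g = i_{2} \circ f$, which is plainly $F$-linear as a composition of $F$-linear maps. The one content-bearing step is to check that $g(v)^{2} = -q_{1}(v)\cdot 1_{\mathcal{C}_{2}}$ for every $v \in V_{1}$. Since $f$ is orthogonal, we have $S_{2}(f(v),f(v)) = S_{1}(v,v)$, hence $q_{2}(f(v)) = q_{1}(v)$. Applying identity (\ref{defClifford}) inside $\mathcal{C}_{2}$ to the vector $f(v) \in V_{2}$ yields
\[
g(v)^{2} = i_{2}(f(v))^{2} = -q_{2}(f(v))\cdot 1_{\mathcal{C}_{2}} = -q_{1}(v)\cdot 1_{\mathcal{C}_{2}},
\]
which is exactly the hypothesis needed to invoke the universal property of $\mathcal{C}_{1}$.

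By that universal property, there exists a unique $F$-algebra morphism $\tilde{f}:\mathcal{C}_{1}\to \mathcal{C}_{2}$ satisfying $\tilde{f}\circ i_{1} = g = i_{2}\circ f$, which is precisely the commutativity of the displayed diagram. Uniqueness of $\tilde{f}$ as an $F$-algebra morphism making the diagram commute is also part of the universal property, so nothing further needs to be argued.

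There is no real obstacle here; the statement is essentially a formal consequence of the universal property, and the only computational input is the orthogonality of $f$ translated through the quadratic form identity. If one wanted a more concrete description, one could note that on an orthogonal basis $\{e_{1},\ldots,e_{n}\}$ of $V_{1}$ the morphism $\tilde{f}$ is forced to act by $\tilde{f}(e_{I}) = f(e_{i_{1}})\cdots f(e_{i_{s}})$, but this is not needed for the proof.
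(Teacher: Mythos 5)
Your proof is correct and matches the paper's (implicit) argument exactly: the paper simply remarks that the theorem is "a consequence of the universal property," and you have filled in precisely the one computation — that orthogonality of $f$ gives $q_{2}(f(v)) = q_{1}(v)$, so $i_{2}\circ f$ satisfies the defining relation and the universal property of $\mathcal{C}_{1}$ applies. Nothing to add.
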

We point out in passing that both $f$ and $\tilde{f}$ are necessarily injective, since $V_{1}$ is assumed to be non-singular.

From Theorem \ref{orthomap} and a slight variation of it follow the existence of the principal involution and anti-involution.  The principal involution on $\mathcal{C}$ will be denoted by $x \mapsto x'$.  We remind the reader that it is the unique $F$-algebra automorphism on $\mathcal{C}$ satisfying $v' = -v$, whenever $v \in V$.  The principal anti-involution on $\mathcal{C}$ will be denoted by $x \mapsto x^{*}$. It is the unique $F$-algebra anti-automorphism of $\mathcal{C}$ satisfying $v^{*} = v$, whenever $v \in V$.  It is simple to check that the principal involution and anti-involution commute, that is $(x^{*})' = (x')^{*}$.  At last, we will make use of the Clifford conjugation defined for $x \in \mathcal{C}$ by $\overline{x} = (x^{*})'$.  The Clifford conjugation is also an anti-involution and if $v \in V$, then $\overline{v} = -v$.

Given a universal Clifford algebra $\mathcal{C}$ for $V$, we let
$$\mathcal{C}^{0} = \{x \in \mathcal{C} \, | \, x' = x \} \qquad\hbox{and}\qquad \mathcal{C}^{1} = \{x \in \mathcal{C} \, | \, x' = -x\}. $$
Note that $V \subseteq \mathcal{C}^{1}$ and that $\mathcal{C}^{0}$ is a $F$-subalgebra of $\mathcal{C}$.  Hence, $\mathcal{C}$ has a $\mathbb{Z}/2\mathbb{Z}$-grading.  If $\{e_{1},\ldots,e_{n} \}$ is an orthogonal basis for $V$, then $e_{I} \in \mathcal{C}^{0}$ if and only if $|I|$ is even and $e_{I} \in \mathcal{C}^{1}$ if and only if $|I|$ is odd.

It is well-known that $Z_{\mathcal{C}}(C^{0}) = F + F \cdot e_{\Sigma}$, where $\Sigma = (1,\ldots,n)$.  Moreover, the center of $\mathcal{C}$ is
\begin{equation} \label{center}
Z(\mathcal{C}) = 
\begin{cases}
F + Fe_{\Sigma}, &\text{ if } n \text{ is odd}, \\
F, &\text{ if } n \text{ is even}.
\end{cases}
\end{equation}

\subsection{The Clifford group} \label{sectionClifford}
As before, let $V$ be a non-singular orthogonal geometry and let $\mathcal{C} = \mathcal{C}(V)$ be a universal Clifford algebra for $V$.  We have an obvious action of $\mathcal{C}^{\times}$ on $\mathcal{C}$ given by conjugation, that is $x * y = x y x^{-1}$, whenever $x \in \mathcal{C}^{\times}$ and $y \in \mathcal{C}$.  The associated Clifford group is $R(V) = \{x \in \mathcal{C}^{\times} \, | \, x * V \subseteq V \}$.  Given $x \in R(V)$, we get a $\chi(x) \in GL(V)$ defined by $\chi(x)(v) = xvx^{-1}$, whenever $v \in V$.  In fact, it is simple to check that $\chi(x) \in O(V)$ so that we get an $F$-linear representation $\chi:R(V) \longrightarrow O(V)$.  This linear representation is surjective if ${\rm dim}(V)$ is even, but if ${\rm dim}(V)$ is odd, then $\chi(R(V)) \subseteq SO(V)$.  Moreover, the kernel of $\chi$ is different depending on the parity of ${\rm dim}(V)$.  This phenomena is due to the fact that if $v \in V$ is a non-isotropic vector, then 
\begin{equation} \label{negconjugation}
\chi(v) = -r_{v}.
\end{equation}
Since the landmark paper \cite{AtBoSh64}, it has been realized that it is nicer to work with a different action than the one given by conjugation.  This will give us a different Clifford group, denoted by $\Gamma(V)$, and a representation $\rho:\Gamma(V) \longrightarrow O(V)$ which will always be surjective and the kernel will be the same independently of the parity of ${\rm dim}(V)$.  The key point is that instead of (\ref{negconjugation}), we will have $\rho(v) = r_{v}$, whenever $v$ is a non-isotropic vector of $V$.  

The action of Atiyah, Bott and Shapiro of $\mathcal{C}^{\times}$ on $\mathcal{C}$ is given by $x * y = x y (x')^{-1}$, whenever $x \in \mathcal{C}^{\times}$ and $y \in \mathcal{C}$.  The Clifford group $\Gamma(V)$ is given by $\Gamma(V) = \{x \in \mathcal{C}^{\times} \, | \, x * V \subseteq V \}$.  Hence $\Gamma(V)$ acts on $V$ and it is simple to check that the action of $\Gamma(V)$ on $V$ is $F$-linear.  We then get an $F$-linear representation $\Gamma(V) \longrightarrow GL(V)$.  Note that if $x \in \Gamma(V)$ and $v \in V$, then by definition $xv(x')^{-1} \in V$.  Applying the principal involution gives $x'vx^{-1} = xv(x')^{-1}$ whenever $x \in \Gamma(V)$ and $v \in V$.  It is then a simple calculation to show that $\rho(\Gamma(V)) \subseteq O(V)$; hence, we have an $F$-linear representation
\begin{equation} \label{Cliffordrep}
\rho:\Gamma(V) \longrightarrow O(V).
\end{equation}
Moreover, it is simple to check that any non-isotropic vector is in $\Gamma(V)$, and given such a vector $v$, one has $\rho(v) = r_{v}$.  It follows from Theorem \ref{CartanDieudonne} that the representation (\ref{Cliffordrep}) is surjective.
\begin{Pro} \label{kernel}
With the notation as above, we have ${\rm ker}(\rho) = F^{\times}$.  Therefore, we have a short exact sequence
$$1 \longrightarrow F^{\times} \longrightarrow \Gamma(V) \stackrel{\rho}{\longrightarrow} O(V) \longrightarrow 1.$$
\end{Pro}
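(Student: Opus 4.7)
The plan is to split the verification into the easy inclusion $F^{\times} \subseteq \ker(\rho)$ and the harder converse. For any $\lambda \in F^{\times}$, since $F \subseteq \mathcal{C}^{0}$ we have $\lambda' = \lambda$, so $\rho(\lambda)(v) = \lambda v \lambda^{-1} = v$ for all $v \in V$; this shows $F^{\times} \subseteq \Gamma(V)$ and lies in $\ker(\rho)$.

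For the converse, take $x \in \Gamma(V)$ with $\rho(x) = \operatorname{id}_{V}$, i.e. $x v = v x'$ for every $v \in V$. I would write $x = x_{0} + x_{1}$ with $x_{i} \in \mathcal{C}^{i}$, so $x' = x_{0} - x_{1}$, and equate the components of $(x_{0}+x_{1})v = v(x_{0}-x_{1})$ in $\mathcal{C}^{1}$ and $\mathcal{C}^{0}$ respectively. This gives the two conditions $x_{0} v = v x_{0}$ and $x_{1} v = -v x_{1}$ for all $v \in V$.

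For the even part: since $x_{0}$ commutes with every $v \in V$ and $V$ generates $\mathcal{C}$ as an $F$-algebra, $x_{0} \in Z(\mathcal{C})$. Using (\ref{center}) and the fact that $e_{\Sigma} \in \mathcal{C}^{0}$ iff $n$ is even, the intersection $Z(\mathcal{C}) \cap \mathcal{C}^{0}$ equals $F$ in both parities, so $x_{0} \in F$. For the odd part: the anticommutation $x_{1} v = -v x_{1}$ implies, by an immediate induction on the number of factors, that $x_{1}$ commutes with every product of an even number of elements of $V$; hence $x_{1} \in Z_{\mathcal{C}}(\mathcal{C}^{0}) = F + F e_{\Sigma}$. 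When $n$ is even, $e_{\Sigma} \in \mathcal{C}^{0}$ forces $(F + F e_{\Sigma}) \cap \mathcal{C}^{1} = 0$, so $x_{1} = 0$.

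The main obstacle is the odd-dimensional case, where $e_{\Sigma} \in \mathcal{C}^{1} \cap Z(\mathcal{C})$ leaves open the possibility $x_{1} = \lambda e_{\Sigma}$ for some $\lambda \in F$. To rule this out I would substitute back into $x_{1} v + v x_{1} = 0$: for $n$ odd, $e_{\Sigma}$ is central, so $\lambda e_{\Sigma} v + v (\lambda e_{\Sigma}) = 2 \lambda v e_{\Sigma}$, which must vanish for every $v \in V$. Since $e_{\Sigma} \in \mathcal{C}^{\times}$ and $V \neq 0$, this forces $\lambda = 0$ and hence $x_{1} = 0$. Combining, $x = x_{0} \in F$, and since $x \in \mathcal{C}^{\times}$ we conclude $x \in F^{\times}$. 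The resulting short exact sequence then follows from the surjectivity of $\rho$ already established after (\ref{Cliffordrep}).
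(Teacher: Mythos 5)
Your proof is correct and follows essentially the same route as the paper's: decompose $x=x_0+x_1$ into even and odd parts, reduce to the two conditions $x_0v=vx_0$ and $x_1v=-vx_1$, and use the centralizer facts $Z(\mathcal{C})\cap\mathcal{C}^0=F$ and $Z_{\mathcal{C}}(\mathcal{C}^0)=F+Fe_\Sigma$. The only difference is that you spell out the ``simple computation'' the paper invokes for the odd part (in particular the $n$ odd case, where centrality of $e_\Sigma$ forces $\lambda=0$), and you correctly postpone the conclusion $x\in F^\times$ until after $x_1=0$ is established, whereas the paper prematurely writes $x_0\in F^\times$ at an intermediate step.
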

\begin{proof}
Let $x \in {\rm ker}(\rho)$, then we have $xv = v x'$ for all $v \in V$.  Writing $x = x_{0} + x_{1}$ for some $x_{i} \in \mathcal{C}^{i}$, we then have $x_{0}v = vx_{0}$ and $x_{1}v = -vx_{1}$ for all $v \in V$.  This means that $x_{0} \in Z(\mathcal{C}) \cap \mathcal{C}^{0}$.  Using (\ref{center}), one concludes that $x_{0} \in F^{\times}$. On the other hand, a simple computation shows that if $x \in \mathcal{C}$ and $xv = -vx$ for all $v \in V$, then $x = 0$ if $n$ is odd and $x = \lambda e_{\Sigma}$ for some $\lambda \in F$ if $n$ is even.  Since in our situation $x_{1} \in \mathcal{C}^{1}$, we necessarily have $x_{1} = 0$.  This completes the proof.
\end{proof}
It is also worthwhile to point out the following corollary.
\begin{Cor} \label{clifford_generate}
The Clifford group $\Gamma(V)$ is generated by the non-isotropic vectors in $V$.
\end{Cor}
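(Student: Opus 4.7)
The plan is to combine the short exact sequence of Proposition \ref{kernel} with the Cartan--Dieudonné theorem (Theorem \ref{CartanDieudonne}). Let $x \in \Gamma(V)$ and set $\sigma = \rho(x) \in O(V)$. By Cartan--Dieudonné, we can write $\sigma = r_{v_1} \cdots r_{v_k}$ for some non-isotropic vectors $v_1,\ldots,v_k \in V$. Since each non-isotropic vector $v_i$ lies in $\Gamma(V)$ with $\rho(v_i) = r_{v_i}$, the element $y = v_1 \cdots v_k \in \Gamma(V)$ satisfies $\rho(y) = \sigma = \rho(x)$.

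Next, I would use the kernel computation from Proposition \ref{kernel}: $x y^{-1} \in \ker(\rho) = F^\times$, so there exists $\lambda \in F^\times$ with $x = \lambda\, v_1 \cdots v_k$. It then remains only to show that any scalar $\lambda \in F^\times$ is itself a product of non-isotropic vectors in $V$. For this, pick any non-isotropic vector $v \in V$ (which exists because $V$ is non-singular, by Proposition \ref{existenceort}); then $v^2 = -q(v) \in F^\times$, and the vector $w = -\lambda q(v)^{-1} v$ is again non-isotropic with
$$v \cdot w = -\lambda q(v)^{-1} v^2 = -\lambda q(v)^{-1} \cdot (-q(v)) = \lambda.$$
Thus $x = v \cdot w \cdot v_1 \cdots v_k$ is a product of non-isotropic vectors, as desired.

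There is no real obstacle here; the argument is essentially bookkeeping that leverages the two tools already in hand (the exact sequence and Cartan--Dieudonné). The only mildly nontrivial point is recognizing that $F^\times$ itself sits inside the subgroup generated by non-isotropic vectors, which is handled by the two-vector trick above.
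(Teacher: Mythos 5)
Your proof is correct, and it follows the argument the paper clearly intends (the corollary is stated without proof, immediately after Proposition~\ref{kernel}, with the surjectivity of $\rho$ onto $O(V)$ already established via Cartan--Dieudonn\'e). The two-vector trick for absorbing the scalar $\lambda \in F^{\times}$ is exactly the small point the paper leaves implicit, and you handle it cleanly.
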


As is customary, we define $\Gamma^{0}(V) = \Gamma(V) \cap \mathcal{C}^{0}$ and $\Gamma^{1}(V) = \Gamma(V) \cap \mathcal{C}^{1}$.
\begin{Pro}
The Clifford group $\Gamma(V)$ is the disjoint union of $\Gamma^{0}(V)$ and $\Gamma^{1}(V)$.
\end{Pro}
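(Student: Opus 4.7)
The plan is to reduce the statement to the corollary just proved, which identifies $\Gamma(V)$ with the subgroup generated by non-isotropic vectors, and then exploit the $\mathbb{Z}/2\mathbb{Z}$-grading on $\mathcal{C}$.

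First I would dispose of disjointness. Since $\mathcal{C} = \mathcal{C}^{0} \oplus \mathcal{C}^{1}$ as an $F$-vector space, the intersection $\mathcal{C}^{0} \cap \mathcal{C}^{1}$ equals $\{0\}$. Because $\Gamma(V) \subseteq \mathcal{C}^{\times}$ and $0 \notin \mathcal{C}^{\times}$, it follows immediately that $\Gamma^{0}(V) \cap \Gamma^{1}(V) = \varnothing$.

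The content of the statement is therefore the inclusion $\Gamma(V) \subseteq \Gamma^{0}(V) \cup \Gamma^{1}(V)$, i.e.\ that every element of $\Gamma(V)$ is homogeneous for the grading. I would invoke Corollary \ref{clifford_generate}: any $x \in \Gamma(V)$ can be written as a product $x = v_{1} v_{2} \cdots v_{k}$ of non-isotropic vectors $v_{i} \in V$. Since $V \subseteq \mathcal{C}^{1}$ and the grading relations $\mathcal{C}^{i}\cdot \mathcal{C}^{j} \subseteq \mathcal{C}^{i+j \bmod 2}$ hold, the product $v_{1}\cdots v_{k}$ lies in $\mathcal{C}^{0}$ when $k$ is even and in $\mathcal{C}^{1}$ when $k$ is odd. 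In either case $x$ is homogeneous and so lies in $\Gamma^{0}(V) \cup \Gamma^{1}(V)$.

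I do not foresee any obstacle here: the statement is a direct corollary of the previous result together with the grading on $\mathcal{C}$. Had Corollary \ref{clifford_generate} not been available, one could argue directly by writing $x = x_{0} + x_{1}$ with $x_{i} \in \mathcal{C}^{i}$ and using that $w := x v (x')^{-1} \in V \subseteq \mathcal{C}^{1}$ for each $v \in V$; comparing the two homogeneous components of the relation $(x_{0} + x_{1})v = w(x_{0} - x_{1})$ would yield $x_{0} v = w x_{0}$ and $x_{1} v = -w x_{1}$, forcing both $x_{0}$ and $x_{1}$ individually to belong to $\Gamma(V) \cup \{0\}$, which again gives the decomposition after invoking Proposition \ref{kernel} on their ratio. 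The approach via the corollary is, however, considerably shorter and is the one I would present.
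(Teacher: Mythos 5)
Your proof is correct and takes exactly the approach the paper intends: the paper's entire proof is ``This follows from Corollary~\ref{clifford_generate},'' and you have simply filled in the (straightforward) details — disjointness from $\mathcal{C}^{0}\cap\mathcal{C}^{1}=\{0\}$ and $0\notin\mathcal{C}^{\times}$, and homogeneity from writing any element as a product of non-isotropic vectors and using $V\subseteq\mathcal{C}^{1}$ together with the $\mathbb{Z}/2\mathbb{Z}$-grading.
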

\begin{proof}
This follows from Corollary \ref{clifford_generate}.
\end{proof}
\begin{Pro}
With the notation as above, the restriction of $\rho$ to $\Gamma^{0}(V)$ (which is a subgroup of $\Gamma(V)$), denoted by $\rho_{0}$, induces a short exact sequence
$$1 \longrightarrow F^{\times} \longrightarrow \Gamma^{0}(V) \stackrel{\rho_{0}}{\longrightarrow} SO(V) \longrightarrow 1. $$
\end{Pro}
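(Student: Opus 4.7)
The plan is to assemble the short exact sequence from three ingredients already provided in the excerpt: Corollary~\ref{clifford_generate} (generation of $\Gamma(V)$ by non-isotropic vectors), Proposition~\ref{kernel} ($\ker(\rho)=F^\times$), and the Cartan--Dieudonn\'e Theorem~\ref{CartanDieudonne} together with the identity $\rho(v)=r_v$ for non-isotropic $v$. First, note that $F^{\times}$ sits inside $\Gamma^{0}(V)$ because every nonzero scalar is a unit in $\mathcal{C}$ fixed by the principal involution. Since $\ker(\rho_{0})=\ker(\rho)\cap\Gamma^{0}(V)=F^{\times}\cap\mathcal{C}^{0}=F^{\times}$, the kernel part is immediate from Proposition~\ref{kernel}.

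Next I would establish that $\rho_{0}(\Gamma^{0}(V))\subseteq SO(V)$. By Corollary~\ref{clifford_generate}, every $x\in\Gamma(V)$ can be written as a finite product $x=v_{1}\cdots v_{k}$ of non-isotropic vectors. The $\mathbb{Z}/2\mathbb{Z}$-grading of $\mathcal{C}$ (together with $V\subseteq\mathcal{C}^{1}$) forces $x\in\mathcal{C}^{0}$ precisely when $k$ is even. Applying $\rho$ to such an $x$ yields
$$\rho(x)=r_{v_{1}}\cdots r_{v_{k}},$$
whose determinant equals $(-1)^{k}=1$ when $k$ is even. Hence $\rho_{0}$ does land in $SO(V)$.

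For surjectivity, take $\sigma\in SO(V)$. By Theorem~\ref{CartanDieudonne}, $\sigma=r_{w_{1}}\cdots r_{w_{\ell}}$ for some non-isotropic $w_{i}\in V$, and $\det(\sigma)=(-1)^{\ell}=1$ forces $\ell$ to be even. Setting $x=w_{1}\cdots w_{\ell}$, each $w_{i}\in\Gamma(V)$, so $x\in\Gamma(V)\cap\mathcal{C}^{0}=\Gamma^{0}(V)$, and $\rho_{0}(x)=\sigma$. Combining these three observations gives the asserted short exact sequence.

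No serious obstacle is anticipated: every step is essentially a parity bookkeeping argument on top of results already proved. The only point requiring a little care is the implicit claim that $\Gamma^{0}(V)$ is a subgroup of $\Gamma(V)$, but this is clear because $\mathcal{C}^{0}$ is an $F$-subalgebra of $\mathcal{C}$ closed under inverses of its units, and the intersection of a subgroup with a subalgebra's unit group is a subgroup.
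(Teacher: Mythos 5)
Your proof is correct and follows essentially the same route as the paper: Cartan--Dieudonn\'e, the identity $\rho(v)=r_v$, and parity bookkeeping. The only difference is in how the argument is organized. You explicitly verify both containments, namely that $\rho_0(\Gamma^0(V))\subseteq SO(V)$ and that $\rho_0$ is onto $SO(V)$, each by a direct parity count. The paper instead observes that, given the already-established sequence $1\to F^\times\to\Gamma(V)\to O(V)\to 1$, it suffices to show $\rho^{-1}(SO(V))\subseteq\Gamma^0(V)$, and proves exactly that via Cartan--Dieudonn\'e plus the kernel description to write $x=\lambda v_1\cdots v_m$ with $m$ even. Strictly speaking the paper leaves the reverse containment $\Gamma^0(V)\subseteq\rho^{-1}(SO(V))$ implicit (it follows immediately from the same parity argument, or from the disjoint-union statement proved just before), so your write-up is slightly more complete while using the same underlying tools.
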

\begin{proof}
Since $F^{\times} \subseteq \Gamma^{0}(V)$, the only thing we have to show is that if $x \in \Gamma(V)$ is such that $\rho(x) = \sigma \in SO(V)$, then $x \in \Gamma^{0}(V)$.  But, if so, then Theorem \ref{CartanDieudonne} implies that there exist non-isotropic vectors $v_{1},\ldots,v_{m} \in V$ such that $\sigma = r_{v_{1}} \cdot \ldots \cdot r_{v_{m}}$.  Since, $\sigma \in SO(V)$, we have ${\rm det}(\sigma) = 1$, and thus $(-1)^{m} =1$ meaning that $m$ is even.  Now, there exists $\lambda \in F^{\times}$ such that $x = \lambda v_{1} \cdot \ldots \cdot v_{m}$.  The integer $m$ being even, we conclude that $x \in \Gamma^{0}(V)$ as we wanted to show.
\end{proof}

\subsection{Abstract Pin and Spin groups} \label{abstract}
Pin and Spin groups will appear in various disguises throughout this paper, and it is convenient to give their axiomatic definitions.
\subsubsection{Pin groups} \label{abstractpin}
Suppose we are given:
\begin{enumerate}
\item A group $G$ and a non-singular orthogonal geometry $V$ over $F$.
\item A short exact sequence of groups $1 \longrightarrow F^{\times} \longrightarrow G \stackrel{\rho}{\longrightarrow} O(V) \longrightarrow 1$.
\item A group morphism $N:G \longrightarrow F^{\times}$ satisfying $N(\lambda) = \lambda^{2}$ whenever $\lambda \in F^{\times}$.
\end{enumerate}
Then, we get a commutative diagram
\begin{equation*}
        \begin{CD}
                1 @>>> F^{\times} @>>> G @>{\rho}>> O(V) @>>>1 \\
                & & @VV{f}V @VV{N}V @VV{\vartheta}V \\
                1  @>>> F^{\times 2}@>>> F^{\times} @>>> F^{\times}/F^{\times 2} @>>> 1,\\
        \end{CD}
\end{equation*}
where $f:F^{\times} \longrightarrow F^{\times 2}$ is given by $x \mapsto f(x) = x^{2}$ and $\vartheta$ is induced from $N$ so that if $\rho(g) = \sigma$, then $\vartheta(\sigma) = N(g) \cdot F^{\times 2}$.
\begin{Def}
One defines the group $Pin^{+}(\rho,N) = {\rm ker}(N)$.
\end{Def}
Since the map $f$ is surjective, the snake lemma gives the following diagram
\begin{equation*}
        \begin{CD}
                1 @>>> \{\pm 1 \} @>>> Pin^{+}(\rho,N) @>>> {\rm ker}(\vartheta) @>>> 1\\
                & & @VVV @VVV @VVV \\
                1 @>>> F^{\times} @>>> G @>{\rho}>> O(V) @>>>1 \\
                & & @VV{f}V @VV{N}V @VV{\vartheta}V \\
                1  @>>> F^{\times 2}@>>> F^{\times} @>>> F^{\times}/F^{\times 2} @>>> 1,\\
        \end{CD}
\end{equation*}
whose rows are exact.  This last diagram induces in turns the following important exact sequence:
\begin{equation} \label{pinplusexact}
1 \longrightarrow \{\pm 1 \} \longrightarrow Pin^{+}(\rho,N) \stackrel{\rho}{\longrightarrow} O(V) \stackrel{\vartheta}{\longrightarrow} F^{\times}/F^{\times 2}.
\end{equation}
The group morphism $\vartheta:O(V) \longrightarrow F^{\times}/F^{\times 2}$ is called the spinor norm morphism.

\subsubsection{Spin groups} \label{abstractspin}
A similar theory can be developed for the group $SO(V)$ instead of $O(V)$.  Suppose we are given:
\begin{enumerate}
\item A group $G$ and a non-singular orthogonal geometry $V$ over $F$.
\item A short exact sequence of groups $1 \longrightarrow F^{\times} \longrightarrow G \stackrel{\rho}{\longrightarrow} SO(V) \longrightarrow 1$.
\item A group morphism $N:G \longrightarrow F^{\times}$ satisfying $N(\lambda) = \lambda^{2}$ whenever $\lambda \in F^{\times}$.
\end{enumerate}
If one defines the group $Spin^{+}(\rho,N) = {\rm ker}(N)$, then we have the important exact sequence
\begin{equation} \label{spinplusexact}
1 \longrightarrow \{\pm 1 \} \longrightarrow Spin^{+}(\rho,N) \stackrel{\rho}{\longrightarrow} SO(V) \stackrel{\vartheta}{\longrightarrow} F^{\times}/F^{\times 2},
\end{equation}
and the group morphism $\vartheta:SO(V) \longrightarrow F^{\times}/F^{\times 2}$ is also called the spinor norm.

\subsection{Pin and Spin groups} \label{pinspin}
Let $V$ be a non-singular orthogonal geometry and let $\mathcal{C}$ be a universal Clifford algebra for $V$.  As in the previous sections, we let $\Gamma(V)$ be the Clifford group.  If $x \in \Gamma(V)$, then $x \overline{x} \in F^{\times}$.  Indeed, it follows from Corollary \ref{clifford_generate} that any $x \in \Gamma(V)$ can be written as $x = v_{1} \cdot \ldots \cdot v_{m}$ for some non-isotropic vectors $v_{i} \in V$.  Hence, $x \overline{x} = q(v_{1}) \cdot \ldots \cdot q(v_{m}) \in F^{\times}$.  We obtain a group morphism $N:\Gamma(V) \longrightarrow F^{\times}$ defined by $x \mapsto N(x) = x \overline{x}$.  Note that if $x \in \Gamma(V)$, then $x  \overline{x} = \overline{x}  x$.  Moreover, if $\lambda \in F^{\times}$, then $N(\lambda) = \lambda^{2}$.  We are now in the setting of \S \ref{abstract}, and we let $Pin^{+}(V) = Pin^{+}(\rho,N)$ and $Spin^{+}(V) = Spin^{+}(\rho_{0},N)$.

\begin{Def}
From now on, we let 
$$O^{+}(V) = \rho(Pin^{+}(V))\qquad\hbox{and}\qquad SO^{+}(V) = \rho(Spin^{+}(V)).$$
\end{Def}
The groups $O^{+}(V)$ and $SO^{+}(V)$ are sometimes called the spinorial kernels.

\subsection{Lorentzian geometry over $\mathbb{R}$}
Let $V$ be the real vector space $\mathbb{R}^{m}$ and let $p,q \in \mathbb{Z}_{\ge 0}$ be such that $p+q = m$.  The function $S:V \times V \longrightarrow \mathbb{R}$ defined by
\begin{equation} \label{pq}
S(x,y) = \sum_{i=1}^{p}x_{i}y_{i} - \sum_{j=p+1}^{m}x_{j}y_{j}, 
\end{equation}
where $x = (x_{1},\ldots,x_{m})$ and $y = (y_{1},\ldots,y_{m})$, is easily seen to be a symmetric $\mathbb{R}$-bilinear form of signature $(p,q)$.  Moreover, the orthogonal geometry $(V,S)$ is non-singular.  As we pointed out is \S \ref{generalities}, these are the only non-singular orthogonal geometries over $\mathbb{R}$ (up to isometry).  These orthogonal geometries will be denoted by $\mathbb{R}^{p,q}$.

One of them will be particularly important for us.  It is $\mathbb{R}^{n,1}$ which will be referred to as the Lorentzian geometry.  For the reminder of this section, we let $S$ denote the symmetric bilinear form (\ref{pq}) when $p = n$ and $q = 1$.  The corresponding quadratic form will be denoted by $q$.

In this situation, we have $O(\mathbb{R}^{n,1})$ and if one chooses the standard basis $(e_{1},\ldots,e_{n})$ for $\mathbb{R}^{n+1}$ which is an orthogonal basis for $\mathbb{R}^{n,1}$, then we we have the corresponding matrix groups.  If 
$$J = diag(1,\ldots,1,-1) \in M_{n+1}(\mathbb{R}), $$
then we have
$$O(n,1) = \{M \in GL(n+1,\mathbb{R}) \, | \, M^{t}JM = J \}. $$
The set $C = \{x \in \mathbb{R}^{n+1} \, | \, q(x) \le 0 \}$ is a double-cone, and with
$$C_{\pm} = \{x \in C \, | \, \pm x_{n+1} \ge ||(x_{1},x_{2},\ldots,x_{n}) || \}, $$
one defines
$$O^{+}(\mathbb{R}^{n,1}) = \{g \in O(\mathbb{R}^{n,1}) \, | \, g(C_{+}) = C_{+} \}. $$
The reader will notice that we already defined a group $O^{+}(\mathbb{R}^{n,1})$ in \S \ref{pinspin}, but there are no ambiguities, since one can check that both groups are the same.  Moreover, it is simple to check that if $M = (m_{ij}) \in O(n,1)$, then $m_{n+1,n+1} \ge 1$ or $m_{n+1,n+1} \le -1$, and one has
$$O^{+}(n,1) = \{M \in O(n,1) \, | \, m_{n+1,n+1} \ge 1 \}. $$
One also lets $SO^{+}(\mathbb{R}^{n,1})= O^{+}(\mathbb{R}^{n,1}) \cap SL(\mathbb{R}^{n+1})$ which one can check is equal to the group $SO^{+}(\mathbb{R}^{n,1})$ defined in \S \ref{pinspin}.  As explained in \S \ref{pinspin}, we have the two $2$-covers
\begin{equation} \label{Oplus}
1 \longrightarrow \{\pm 1 \} \longrightarrow Pin^{+}(\mathbb{R}^{n,1}) \stackrel{\rho}{\longrightarrow} O^{+}(\mathbb{R}^{n,1}) \longrightarrow 1, 
\end{equation}
and
\begin{equation} \label{SOplus}
1 \longrightarrow \{\pm 1 \} \longrightarrow Spin^{+}(\mathbb{R}^{n,1}) \stackrel{\rho}{\longrightarrow} SO^{+}(\mathbb{R}^{n,1}) \longrightarrow 1.
\end{equation}

\subsection{Change of fields} \label{changeoffields}
Throughout this section, we let $F$ be our fixed ground field, and we let $E$ be a field extension of $F$.  If $V$ is an $F$-vector space, then we let $V_{E} = E \otimes_{F} V$ be the $E$-vector space obtained from $V$ by extending the scalars to $E$.  Given a $K$-vector space $V$, where $K$ is a field, we let $\mathcal{L}(V)$ denote the $K$-algebra of $K$-linear endomorphisms of $V$.  The universal property satisfied by $V_{E}$ induces an injective morphism of $F$-algebras
\begin{equation} \label{algebraemb}
\mathcal{L}(V) \hookrightarrow \mathcal{L}(V_{E}). 
\end{equation}
The image of an $f \in \mathcal{L}(V)$ via this map will be denoted by $f_{E}$ which satisfies on pure tensors the formula $f_{E}(e \otimes v) = e \otimes f(v)$.  The morphism (\ref{algebraemb}) induces in turn an injective group morphism
\begin{equation} \label{embedofgenerallineargroup}
GL(V) \hookrightarrow GL(V_{E}). 
\end{equation}
Now, if $V$ is an orthogonal geometry over $F$, say with associated symmetric $F$-bilinear form $S$ and quadratic form $q$, then $V_{E}$ becomes an orthogonal geometry over $E$ whose symmetric $E$-bilinear form $S_{E}$ is given on pure tensors by $S_{E}(e_{1} \otimes v_{1},e_{2} \otimes v_{2}) = e_{1} \cdot e_{2} \cdot S(v_{1},v_{2})$.  The associated quadratic form $q_{E}$ satisfies $q_{E}(e \otimes v) = e^{2} q(v)$.  If $V$ is non-singular, then so is $V_{E}$, and we now assume that.  Given $f \in O(V)$, it is simple to check that $f_{E} \in O(V_{E})$, and therefore the group morphism (\ref{embedofgenerallineargroup}) induces an injective group morphism
\begin{equation} \label{embedorthogonal}
\tau_{E}:O(V) \hookrightarrow O(V_{E}).
\end{equation}

We now let $\mathcal{C}(V)$ be a universal Clifford algebra for $V$, and $\mathcal{C}(V_{E})$ will be one for $V_{E}$.  The injective morphism of $F$-vector spaces $V \hookrightarrow V_{E} \hookrightarrow \mathcal{C}(V_{E})$, satisfies $(1 \otimes v)^{2} =  - q_{E}(1 \otimes v) = - q(v)^{2}$, and therefore, we get from the universal property satisfied by the Clifford algebra $\mathcal{C}(V)$ a morphism of $F$-algebras $\psi_{E}:\mathcal{C}(V) \hookrightarrow \mathcal{C}(V_{E})$ which is in fact injective.  Hence, we can view $\mathcal{C}(V)$ inside $\mathcal{C}(V_{E})$ using $\psi_{E}$.  Moreover, the map $\psi_{E}$ behaves well with respect to the three involutions, namely
$$\psi_{E}(x') = \psi_{E}(x)', \psi_{E}(x^{*}) = \psi_{E}(x)^{*}, \text{ and } \psi_{E}(\overline{x}) = \overline{\psi_{E}(x)},  $$
for all $x \in \mathcal{C}(V)$.  Since $\psi_{E}$ behaves well in particular with respect to the principal involution, we have $\psi_{E}(\Gamma(V)) \subseteq \Gamma(V_{E})$.  Therefore, we get the following commutative diagram:
\begin{equation*}
        \begin{CD}
                1 @>>> F^{\times} @>>> \Gamma(V) @>{\rho}>> O(V) @>>>1 \\
                & & @VVV @VV{\psi_{E}}V @VV{\tau_{E}}V \\
                1  @>>> E^{\times }@>>> \Gamma(V_{E}) @>{\rho_{E}}>> O(V_{E}) @>>> 1\\
        \end{CD}
\end{equation*}
where the vertical arrows are all injective. 

Now, since $\psi_{E}$ behaves well with respect to the Clifford conjugation, we obviously have
$$ \psi_{E}(Pin^{+}(V)) \subseteq Pin^{+}(V_{E}).$$ 
Moreover, we get the following commutative diagram:
\begin{equation*}
        \begin{CD}
                1 @>>> \{\pm 1\} @>>> Pin^{+}(V) @>{\rho}>> O(V) @>{\vartheta}>> F^{\times}/F^{\times 2} \cdot \{ \pm 1 \} \\
                & & @VVV @VV{\psi_{E}}V @VV{\tau_{E}}V @VVV \\
                1  @>>> \{\pm 1 \} @>>> Pin^{+}(V_{E}) @>{\rho_{E}}>> O(V_{E}) @>{\vartheta_{E}}>> E^{\times}/E^{\times 2}\cdot \{\pm 1\}\\
        \end{CD}
\end{equation*}
where the first three vertical arrows are injective, but not necessarily the last one.  One has a similar diagram for $Spin^{+}(V)$.

\section{Vahlen groups} \label{Vahlen1}
In \cite{Vahlen02}, Vahlen described the group of orientation preserving isometries of the $n$-dimensional hyperbolic space as the central quotient of a certain group of two by two matrices with entries in the Clifford algebra of $\mathbb{R}^{n-2,0}$.  These groups can be viewed as generalizations of both $SL(2,\mathbb{R})$ and $SL(2,\mathbb{C})$, since $PSL(2,\mathbb{R})$ and $PSL(2,\mathbb{C})$ are the groups of orientation preserving isometries of the $2$-dimensional and $3$-dimensional hyperbolic spaces respectively.  Vahlen's work had been forgotten for a while until Maass used them in his fundamental paper \cite{Maass49}.  These groups, now called Vahlen groups, were studied later by Ahlfors in \cite{Ahlfors84}, \cite{Ahlfors85}, \cite{Ahlfors85a}, \cite{Ahlfors86} and \cite{Ahlfors86a} in connection with the group of M\"{o}bius transformations $M(\mathbb{R}^{n})$.  In \cite{ElGrMe87}, it was shown how to define a Vahlen group for any non-singular orthogonal geometry over any field of characteristic different from $2$, and not just over $\mathbb{R}$ as it had been done previously.  Moreover, they showed that a Vahlen group is isomorphic to a certain spin group, and this systematically gave isomorphisms between classical groups in small dimensions, a subject which had been previously studied by van der Waerden and Dieudonn\'{e} among others.  

In the literature, one can find the definition of Vahlen groups for the so-called paravectors and also for non-paravectors.  In this section, we place ourselves in the latter situation, and we define Vahlen groups in the non-paravector situation for any non-singular orthogonal geometry over any field of characteristic different from $2$.  Our approach is via pin and spin groups.  This might be known to the experts, but we have not found it in the literature, so we include these results here.  In the recent preprint \cite{McInroy}, McInroy describes Vahlen groups over commutative rings, not only fields, using an approach which is very similar to ours.  The paravector case and the relationship between the two setups will be explained in \S \ref{connections} below.

\subsection{Vahlen groups} \label{Vahlen}
We start with a base field $F$ of characteristic different from $2$ and we recall the following important definition.
\begin{Def}
A non-singular plane (meaning a two dimensional $F$-vector space) with an orthogonal geometry is called a hyperbolic plane if it contains a non-zero isotropic vector.
\end{Def}
The following lemma is simple and the proof is left to the reader.
\begin{Lem}
Let $P$ be a hyperbolic plane $P$ and assume that $f_{1} \in P$ is a non-zero isotropic vector.  Then, there exists a unique non-zero isotropic vector $f_{2}$ satisfying
$$S(f_{1},f_{2}) = -\frac{1}{2}.$$
\end{Lem}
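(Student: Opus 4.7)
The plan is to use the non-singularity of $P$ together with a change of basis adapted to $f_{1}$, reducing the statement to an explicit (and easy) linear-algebra calculation in coordinates.

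First, since $P$ is non-singular and $f_{1}\ne 0$, there exists some $v\in P$ with $S(f_{1},v)\ne 0$; otherwise $f_{1}$ would lie in $\mathrm{Rad}(P)=0$. Such a $v$ cannot be a scalar multiple of $f_{1}$ (since $S(f_{1},f_{1})=0$), so $\{f_{1},v\}$ is a basis of $P$. Next I would look for $f_{2}$ in the form $f_{2}=af_{1}+bv$ and translate the two requirements into equations in $a,b$:
\begin{align*}
S(f_{1},f_{2}) &= b\,S(f_{1},v) = -\tfrac{1}{2},\\
q(f_{2}) &= 2ab\,S(f_{1},v) + b^{2}q(v) = 0.
\end{align*}
The first equation forces $b=-1/\bigl(2S(f_{1},v)\bigr)\ne 0$, and then the second determines $a=q(v)/\bigl(4S(f_{1},v)^{2}\bigr)$. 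This gives a concrete $f_{2}$ satisfying both conditions, and since $S(f_{1},f_{2})\ne 0$ the vector $f_{2}$ is automatically non-zero. This settles existence.

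For uniqueness, suppose $f_{2}$ and $f_{2}'$ are two non-zero isotropic vectors with $S(f_{1},f_{2})=S(f_{1},f_{2}')=-\tfrac{1}{2}$. The key observation is that in the $2$-dimensional non-singular plane $P$ containing the isotropic vector $f_{1}$, one has $f_{1}^{\perp}=\mathrm{Span}(f_{1})$: the orthogonal complement is one-dimensional by non-singularity, and it contains $f_{1}$ because $f_{1}$ is isotropic. Since $S(f_{1},f_{2}-f_{2}')=0$, it follows that $f_{2}'=f_{2}+cf_{1}$ for some $c\in F$. Imposing $q(f_{2}')=0$ gives
\[
0=q(f_{2}')=q(f_{2})+2c\,S(f_{1},f_{2})+c^{2}q(f_{1}) = -c,
\]
so $c=0$ and $f_{2}'=f_{2}$.

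The argument is essentially routine; the only place where one has to pay attention is the description of $f_{1}^{\perp}$ used in the uniqueness step, which is what makes the hypothesis that $P$ is \emph{both} two-dimensional and non-singular essential.
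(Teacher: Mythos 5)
Your proof is correct and complete. Note that the paper itself does not supply a proof of this lemma (it is explicitly left to the reader), so there is nothing to compare against; your argument — producing $f_{2}$ in coordinates $a f_{1}+bv$ with $S(f_{1},v)\ne 0$, then using $f_{1}^{\perp}=\mathrm{Span}(f_{1})$ in the non-singular plane to force uniqueness — is exactly the routine computation the authors had in mind.
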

We will call such a pair $(f_{1},f_{2})$ a hyperbolic pair.  Note that a hyperbolic pair $(f_{1},f_{2})$ is necessarily a basis for the hyperbolic plane $P$ and that the isotropic vectors in $P$ consist precisely of ${\rm Span}(f_{1}) \cup {\rm Span}(f_{2})$.  We remark as well that two hyperbolic planes are necessarily isometric.

Let $(V,S_{1})$ be a non-singular orthogonal geometry (with associated quadratic form $q_{1}$) and let $(P,S_{2})$ be a hyperbolic plane (with associated quadratic form $q_{2}$).  For the remainder of \S \ref{Vahlen}, we set
$$W = V \perp P, $$
and we fix a hyperbolic pair $(f_{1},f_{2})$ for $P$.  Note that $W$ is also a non-singular orthogonal geometry.  We let $S$ denote its symmetric $F$-bilinear form, and we let $q$ denote the corresponding quadratic form.  Every $w \in W$ can be uniquely written as $w = v + \lambda_{1}f_{1} + \lambda_{2}f_{2}$ for some $v \in V$ and $\lambda_{1},\lambda_{2} \in F$.  Note that $q_{2}(\lambda_{1}f_{1} + \lambda_{2}f_{2}) = - \lambda_{1}\lambda_{2}$, and therefore
$$q(v + \lambda_{1}f_{1} + \lambda_{2}f_{2}) = q_{1}(v) - \lambda_{1}\lambda_{2}. $$
Following \cite{Sh04}, we define a map $\phi:W \longrightarrow M_{2}(\mathcal{C}(V))$ via 
\begin{equation*}
w \mapsto\phi(w) =
\begin{pmatrix}
v & \lambda_{1} \\
\lambda_{2} & \bar{v}
\end{pmatrix},
\end{equation*}
where $w = v + \lambda_{1}f_{1} + \lambda_{2}f_{2}$.  The map $\phi$ is clearly $F$-linear, and a simple computation shows that
\begin{equation*}\phi(w)^{2} = -q(w) \cdot I_{2} \qquad\hbox{where}\qquad
I_{2} =
\begin{pmatrix}
1 & 0 \\
0 & 1
\end{pmatrix}
\end{equation*}
is the unit element of the $F$-algebra $M_{2}(\mathcal{C}(V))$.  By the universal property satisfied by universal Clifford algebras, we get an $F$-algebra morphism
$$\phi:\mathcal{C}(W) \longrightarrow M_{2}(\mathcal{C}(V)), $$
which we denote by the same symbol $\phi$.  It is simple to check that $\phi$ is surjective, and since $\mathcal{C}(W)$ and $M_{2}(\mathcal{C}(V))$ have the same dimensions as $F$-vector spaces, the morphism $\phi$ is an isomorphism.  The map $\phi$ being an isomorphism of $F$-algebras, we have in particular $\phi(\mathcal{C}(W)^{\times}) = M_{2}(\mathcal{C}(V))^{\times}$.  We can now define the notion of Vahlen groups.
\begin{Def}
We define the following subgroups of $M_{2}(\mathcal{C}(V))^{\times}$.
\begin{enumerate}
\item $\mathcal{V}(V) = \phi(\Gamma(W))$,
\item $\mathcal{V}^{0}(V) = \phi(\Gamma^{0}(W))$
\end{enumerate}
\end{Def}
We warn the reader that this notation is our own  and we have not seen it in the literature.  Our goal now is to give a more explicit description of Vahlen groups.  The three (anti) involutions $',^{*}, \bar{}$  of $\mathcal{C}(W)$ correspond to some (anti) involutions of $M_{2}(\mathcal{C}(V))$ which we will denote by $\alpha, \beta, \gamma$ respectively.  Our next task is to find formulas for $\alpha, \beta$ and $\gamma$.  

Given 
\begin{equation*}
A =
\begin{pmatrix}
a & b \\
c & d
\end{pmatrix} \in M_{2}(\mathcal{C}(V))
\end{equation*}
we set
\begin{equation*}
\alpha(A)
=
\begin{pmatrix}
a' & -b' \\
-c' & d'
\end{pmatrix},
\quad
\beta(A)
=
\begin{pmatrix}
\bar{d} & \bar{b} \\
\bar{c} & \bar{a}
\end{pmatrix}
\quad\hbox{and}\quad
\gamma(A)
=
\begin{pmatrix}
d^{*} & -b^{*} \\
-c^{*} & a^{*}
\end{pmatrix}.
\end{equation*}

\begin{Lem} \label{alpbetgam}
We have $\phi(x') = \alpha(\phi(x))$, $\phi(x^{*}) = \beta(\phi(x))$ and $\phi(\bar{x}) = \gamma(\phi(x))$ 
for all $x \in \mathcal{C}(W)$. 
\end{Lem}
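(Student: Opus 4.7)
The plan is to leverage the universal property of the Clifford algebra $\mathcal{C}(W)$, which gives a clean characterization of the principal involution, principal anti-involution, and Clifford conjugation as the \emph{unique} $F$-algebra (anti-)automorphisms extending $w\mapsto -w$, $w\mapsto w$, and $w\mapsto -w$ respectively on the generating subspace $W$. Since $\phi$ is an isomorphism of $F$-algebras, it suffices to transport these characterizations across $\phi$ and verify the corresponding behavior of $\alpha,\beta,\gamma$ on the explicit image $\phi(W)\subset M_{2}(\mathcal{C}(V))$.

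First, I would verify that $\alpha$ is an $F$-algebra automorphism of $M_{2}(\mathcal{C}(V))$ satisfying $\alpha^{2}=\mathrm{id}$, while $\beta$ and $\gamma$ are $F$-algebra anti-automorphisms with $\beta^{2}=\gamma^{2}=\mathrm{id}$. These are direct matrix computations: $F$-linearity and sending $I_{2}$ to $I_{2}$ are immediate, and multiplicativity (resp.\ anti-multiplicativity) is checked by multiplying two generic $2\times 2$ matrices and applying that $'$ is an automorphism of $\mathcal{C}(V)$ while $^{*}$ and $\bar{\phantom{x}}$ are anti-automorphisms (reversing the order of factors in each entry).

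Next, since $\phi:\mathcal{C}(W)\stackrel{\sim}{\to}M_{2}(\mathcal{C}(V))$ is an $F$-algebra isomorphism, the map $\phi^{-1}\circ\alpha\circ\phi$ is an $F$-algebra automorphism of $\mathcal{C}(W)$, and $\phi^{-1}\circ\beta\circ\phi$ is an $F$-algebra anti-automorphism of $\mathcal{C}(W)$. By the universal characterizations of $'$ and $^{*}$ on $\mathcal{C}(W)$, the first identity $\phi(x')=\alpha(\phi(x))$ will follow once I check that $\alpha(\phi(w))=-\phi(w)$ for all $w\in W$, and the second will follow once I check $\beta(\phi(w))=\phi(w)$. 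Writing $w=v+\lambda_{1}f_{1}+\lambda_{2}f_{2}$ with $v\in V$, and using $v'=-v$, $v^{*}=v$, $\bar{v}=-v$ (which hold because $V$ sits inside $\mathcal{C}(V)$ as degree-one elements), a one-line matrix evaluation yields
\begin{equation*}
\alpha(\phi(w))=\begin{pmatrix} -v & -\lambda_{1} \\ -\lambda_{2} & v\end{pmatrix}=-\phi(w),\qquad \beta(\phi(w))=\begin{pmatrix} v & \lambda_{1} \\ \lambda_{2} & -v\end{pmatrix}=\phi(w),
\end{equation*}
as required.

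For $\gamma$, rather than redo the argument, I would use the relation $\bar{x}=(x^{*})'$ on $\mathcal{C}(W)$: then $\phi(\bar{x})=\phi((x^{*})')=\alpha(\phi(x^{*}))=\alpha(\beta(\phi(x)))$, so it suffices to verify the purely combinatorial identity $\alpha\circ\beta=\gamma$ on $M_{2}(\mathcal{C}(V))$, which follows immediately by applying $\alpha$ to the definition of $\beta$ and using $\bar{y}'=y^{*}$ in each entry. The only real obstacle is bookkeeping: keeping track of the sign patterns from $\alpha$ and the order reversal from $\beta$ when checking multiplicativity, and making sure the images of $f_{1},f_{2}$ transform the $\lambda_{1},\lambda_{2}$ entries consistently. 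Everything else is forced by the universal property.
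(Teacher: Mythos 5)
Your proof is correct and follows essentially the same strategy as the paper: both establish that $\alpha$ is an algebra automorphism agreeing with $-\mathrm{id}$ on $\phi(W)$ and $\beta$ is an anti-automorphism agreeing with $\mathrm{id}$ on $\phi(W)$, invoke uniqueness via the universal property of $\mathcal{C}(W)$, and deduce the identity for $\gamma$ from $\bar{x}=(x^{*})'$. The paper leaves the computations implicit ("we leave these simple computations to the reader"); you spell out the transport across $\phi$ and the evaluation on $\phi(W)$ a bit more explicitly, but there is no substantive difference in method.
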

\begin{proof}
To check the first equation we just have to show the following three properties:
\begin{enumerate}
\item $\alpha^{2} = id_{M_{2}(\mathcal{C}(V))}$,
\item $\alpha(A) = -A$ whenever $A \in \phi(W)$,
\item $\alpha(A \cdot B) = \alpha (A) \cdot \alpha(B)$ for all $A, B \in M_{2}(\mathcal{C}(V))$.
\end{enumerate}
To check the second equation we just have to show the following three properties:
\begin{enumerate}
\item $\beta^{2} = id_{M_{2}(\mathcal{C}(V))}$,
\item $\beta(A) = A$ whenever $A \in \phi(W)$,
\item $\beta(A \cdot B) = \beta(B) \cdot \beta(A)$ for all $A, B \in M_{2}(\mathcal{C}(V))$.
\end{enumerate}
We leave these simple computations to the reader. The third equation follows from the first two.
\end{proof}

The following two results are now easy to check.

\begin{Lem} \label{evenodd}
Let 
\begin{equation*}
A =
\begin{pmatrix}
a & b \\
c & d
\end{pmatrix} \in M_{2}(\mathcal{C}(V)).
\end{equation*}
Then, 
\begin{enumerate}
\item $A \in \phi(\mathcal{C}^{0}(W))$ if and only if $a,d \in \mathcal{C}^{0}(V)$ and $b,c \in \mathcal{C}^{1}(V)$,
\item $A \in \phi(\mathcal{C}^{1}(W))$ if and only if $a,d \in \mathcal{C}^{1}(V)$ and $b,c \in \mathcal{C}^{0}(V)$.
\end{enumerate}
\end{Lem}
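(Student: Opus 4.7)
The plan is to translate the grading of $\mathcal{C}(W)$ into a statement about the matrix involution $\alpha$ on $M_{2}(\mathcal{C}(V))$ via the isomorphism $\phi$, and then read off the matrix entry conditions directly.

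Recall that by definition $\mathcal{C}^{0}(W)$ is the $+1$-eigenspace and $\mathcal{C}^{1}(W)$ is the $-1$-eigenspace of the principal involution $x \mapsto x'$. By Lemma~\ref{alpbetgam}, $\phi$ intertwines $x \mapsto x'$ with $\alpha$. Hence $x \in \mathcal{C}^{0}(W)$ if and only if $\alpha(\phi(x)) = \phi(x)$, and $x \in \mathcal{C}^{1}(W)$ if and only if $\alpha(\phi(x)) = -\phi(x)$. So the lemma reduces to characterizing the $\pm 1$-eigenspaces of $\alpha$ in terms of matrix entries.

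Given $A = \begin{pmatrix} a & b \\ c & d \end{pmatrix}$, the explicit formula
\[
\alpha(A) = \begin{pmatrix} a' & -b' \\ -c' & d' \end{pmatrix}
\]
shows that $\alpha(A) = A$ forces $a' = a$, $d' = d$, $b' = -b$, $c' = -c$, i.e.\ $a, d \in \mathcal{C}^{0}(V)$ and $b, c \in \mathcal{C}^{1}(V)$. Similarly, $\alpha(A) = -A$ forces $a' = -a$, $d' = -d$, $b' = b$, $c' = c$, i.e.\ $a, d \in \mathcal{C}^{1}(V)$ and $b, c \in \mathcal{C}^{0}(V)$. Conversely, each of these entry-wise conditions visibly produces an element of the corresponding eigenspace of $\alpha$.

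Combining the two steps, $A \in \phi(\mathcal{C}^{0}(W))$ if and only if $a, d \in \mathcal{C}^{0}(V)$ and $b, c \in \mathcal{C}^{1}(V)$, and similarly for $\phi(\mathcal{C}^{1}(W))$. There is essentially no obstacle: the whole content is packaged into Lemma~\ref{alpbetgam}, and once that is in hand the argument is a one-line eigenspace bookkeeping, which is presumably why the authors say the verification is easy.
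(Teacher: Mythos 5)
Your proof is correct and follows exactly the route the paper implies: the authors place Lemma~\ref{evenodd} immediately after Lemma~\ref{alpbetgam} and remark that it is "now easy to check," the point being precisely that $\phi$ intertwines the principal involution with $\alpha$, so the grading is read off from the $\pm 1$-eigenspaces of $\alpha$ entrywise, as you do. No gap; the bookkeeping of $a'=\pm a$, $b'=\mp b$, etc.\ is exactly what the paper leaves to the reader.
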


\begin{Lem} \label{invertible}
Let 
\begin{equation*}
A =
\begin{pmatrix}
a & b \\
c & d
\end{pmatrix} \in M_{2}(\mathcal{C}(V)),
\end{equation*}
be such that
\begin{enumerate}
\item $ba^{*} - ab^{*} = cd^{*} - dc^{*} = 0$,
\item $a^{*}c - c^{*}a = d^{*}b - b^{*}d = 0$.
\end{enumerate}
If $ad^{*} - bc^{*} = d^{*}a - b^{*}c = \lambda \in F^{\times}$, then $A \in M_{2}(\mathcal{C}(V))^{\times}$ and 
\begin{equation*}
A^{-1} =  \frac{1}{\lambda}
\begin{pmatrix}
d^{*} & -b^{*} \\
-c^{*} & a^{*}
\end{pmatrix}.
\end{equation*}
\end{Lem}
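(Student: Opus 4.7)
The plan is to verify directly that the matrix
$$B = \frac{1}{\lambda}\begin{pmatrix} d^{*} & -b^{*} \\ -c^{*} & a^{*} \end{pmatrix} = \frac{1}{\lambda}\gamma(A)$$
is a two-sided inverse for $A$, by computing $AB$ and $BA$ entry by entry and showing each equals $I_{2}$. Everything follows from the hypotheses together with two simple consequences obtained by applying the anti-involution $*$ (and using that $\lambda \in F$ is fixed by all three (anti-)involutions).

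First I would record the needed companion identities. Applying $*$ to $ad^{*} - bc^{*} = \lambda$, and using that $*$ is an anti-involution (so $(xy)^{*} = y^{*}x^{*}$) together with $\lambda^{*} = \lambda$, yields $da^{*} - cb^{*} = \lambda$. Similarly, applying $*$ to $d^{*}a - b^{*}c = \lambda$ yields $a^{*}d - c^{*}b = \lambda$. These will be exactly the identities needed to simplify the two diagonal entries of $AB$ and $BA$ that are not directly given by the hypotheses.

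Next I would carry out the multiplication $AB$. The $(1,1)$-entry is $\tfrac{1}{\lambda}(ad^{*} - bc^{*}) = 1$ by hypothesis. The $(1,2)$-entry is $\tfrac{1}{\lambda}(-ab^{*} + ba^{*}) = 0$ by the first condition of (1). The $(2,1)$-entry is $\tfrac{1}{\lambda}(cd^{*} - dc^{*}) = 0$, again by (1). The $(2,2)$-entry is $\tfrac{1}{\lambda}(-cb^{*} + da^{*}) = 1$ by the companion identity $da^{*} - cb^{*} = \lambda$ derived above. Hence $AB = I_{2}$. An entirely parallel calculation shows $BA = I_{2}$: the $(1,1)$-entry $\tfrac{1}{\lambda}(d^{*}a - b^{*}c) = 1$ is hypothesis (and the $(2,2)$-entry $\tfrac{1}{\lambda}(-c^{*}b + a^{*}d) = 1$ uses the second companion identity), while the off-diagonal entries vanish by the second conditions in (1) and (2).

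There is no real obstacle here; the only thing to watch is the noncommutativity of $\mathcal{C}(V)$, which is why one cannot simply reuse the commutative $2 \times 2$ adjugate formula and must verify $AB = I_{2}$ and $BA = I_{2}$ separately. The role of the hypotheses is precisely to force the four "off-diagonal" Clifford expressions to vanish and the two diagonal expressions on each side to equal $\lambda$; applying $*$ to the given identities supplies the two diagonal equalities that are not assumed outright.
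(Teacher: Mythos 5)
Your proof is correct and is essentially the intended argument: the paper declares this lemma ``easy to check'' with no written proof, and the expected check is exactly the direct matrix multiplication you carry out, using the two companion identities obtained by applying the anti-involution $*$ to the hypotheses (which is valid because $*$ is $F$-linear and $\lambda\in F$ satisfies $\lambda^*=\lambda$). One small remark: since $M_2(\mathcal{C}(V))$ is a finite-dimensional $F$-algebra, a one-sided inverse is automatically two-sided, so verifying $AB=I_2$ alone would already suffice; but your symmetric verification of both $AB$ and $BA$ is clean and makes transparent that all four hypotheses get used.
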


We will now give a characterization of the $A \in M_{2}(\mathcal{C}(V))$ which lie in $\mathcal{V}(V)$.
\begin{The} \label{vahlenexpand}
Let 
\begin{equation*}
A =
\begin{pmatrix}
a & b \\
c & d
\end{pmatrix} \in M_{2}(\mathcal{C}(V)).
\end{equation*}
Then $A \in \mathcal{V}(V)$ if and only if the following conditions are satisfied:
\begin{enumerate}
\item $ad^{*} - bc^{*} = d^{*}a - b^{*}c = \lambda \in F^{\times}$, \label{un}
\item $ba^{*} - ab^{*} = cd^{*} - dc^{*} = 0$, \label{deux}
\item $a^{*}c - c^{*}a = d^{*}b - b^{*}d = 0$, \label{trois}
\item $a \bar{a}, b \bar{b}, c \bar{c}, d \bar{d} \in F$, \label{quatre}
\item $b \bar{d}, a \bar{c} \in V$, \label{cinq}
\item $av\bar{b} + b\bar{v}\bar{a}, cv\bar{d} + d\bar{v}\bar{c} \in F$ for all $v \in V$, \label{six}
\item $av\bar{d} + b\bar{v} \bar{c} \in V$ for all $v \in V$. \label{sept}
\end{enumerate}
Moreover $A \in \mathcal{V}^{0}(V)$ if and only if (\ref{un}) through (\ref{sept}) are satisfied as well as:
\begin{enumerate}
\item[{\rm 8.}] $a$,$d \in \mathcal{C}^{0}(V)$ and $b$, $c \in \mathcal{C}^{1}(V)$.
\end{enumerate}
\end{The}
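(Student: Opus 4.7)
My plan is to unwind membership in $\mathcal{V}(V) = \phi(\Gamma(W))$ directly through the isomorphism $\phi$. Recall that $\Gamma(W) \subseteq \mathcal{C}(W)^{\times}$ consists of those invertible $x$ satisfying $x w (x')^{-1} \in W$ for every $w \in W$, and that for any such $x$ one has $x \bar x = \bar x x \in F^{\times}$ (per the beginning of \S\ref{pinspin}). Setting $A = \phi(x)$ and using Lemma \ref{alpbetgam} to identify $\phi(x') = \alpha(A)$ and $\phi(\bar x) = \gamma(A)$, these two properties translate respectively into the matrix identities $A\gamma(A) = \gamma(A)A = \lambda I_{2}$ for some $\lambda \in F^{\times}$, together with $A \phi(w) \alpha(A)^{-1} \in \phi(W)$ for every $w \in W$. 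My strategy is to expand each of these into the list (\ref{un})--(\ref{sept}).

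For the forward direction, I will first expand $A\gamma(A) = \lambda I_{2}$ and $\gamma(A) A = \lambda I_{2}$ entry by entry; this yields $ad^{*} - bc^{*} = d^{*}a - b^{*}c = \lambda$ (the remaining diagonal entries $da^{*} - cb^{*} = a^{*}d - c^{*}b = \lambda$ follow by applying the anti-involution $*$), together with $ba^{*} - ab^{*} = cd^{*} - dc^{*} = 0$ and $a^{*}c - c^{*}a = d^{*}b - b^{*}d = 0$, which are exactly conditions (\ref{un}), (\ref{deux}), (\ref{trois}). Next, $x \bar x = \lambda$ gives $\alpha(A)^{-1} = \beta(A)/\lambda$, and since $W = V \oplus Ff_{1} \oplus Ff_{2}$, $F$-linearity reduces the containment $A \phi(w) \alpha(A)^{-1} \in \phi(W)$ to the three cases $w = f_{1}$, $w = f_{2}$, and $w = v$ arbitrary in $V$. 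A direct multiplication yields
\begin{equation*}
A\phi(f_{1})\alpha(A)^{-1} = \tfrac{1}{\lambda}\begin{pmatrix} a\bar c & a\bar a \\ c\bar c & c\bar a\end{pmatrix}, \qquad A\phi(f_{2})\alpha(A)^{-1} = \tfrac{1}{\lambda}\begin{pmatrix} b\bar d & b\bar b \\ d\bar d & d\bar b\end{pmatrix},
\end{equation*}
and
\begin{equation*}
A\phi(v)\alpha(A)^{-1} = \tfrac{1}{\lambda}\begin{pmatrix} av\bar d + b\bar v \bar c & av\bar b + b\bar v \bar a \\ cv\bar d + d\bar v \bar c & cv\bar b + d\bar v \bar a\end{pmatrix}.
\end{equation*}
A matrix lies in $\phi(W)$ iff its $(1,1)$-entry lies in $V$, its off-diagonals lie in $F$, and its $(2,2)$-entry is the Clifford conjugate of its $(1,1)$-entry; reading off each of the three matrices above will then supply conditions (\ref{quatre}), (\ref{cinq}), (\ref{six}), and (\ref{sept}).

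For the converse, Lemma \ref{invertible} will do the work of showing that (\ref{un})--(\ref{trois}) already force $A \in M_{2}(\mathcal{C}(V))^{\times}$ with $A^{-1} = \gamma(A)/\lambda$, whence $\alpha(A)^{-1} = \beta(A)/\lambda$. Substituting this inverse into the three products displayed above and invoking (\ref{quatre})--(\ref{sept}) will show that $A \phi(w) \alpha(A)^{-1} \in \phi(W)$ on the spanning set and hence on all of $W$ by $F$-linearity, so $\phi^{-1}(A) \in \Gamma(W)$ and $A \in \mathcal{V}(V)$. The final claim about $\mathcal{V}^{0}(V)$ is then immediate from $\mathcal{V}^{0}(V) = \mathcal{V}(V) \cap \phi(\mathcal{C}^{0}(W))$ together with the even/odd description of $\phi(\mathcal{C}^{0}(W))$ given by Lemma \ref{evenodd}, which is exactly item 8.

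The main subtle point I expect to navigate is the $(2,2)$-entries of the three displayed matrices: to avoid generating conditions beyond (\ref{quatre})--(\ref{sept}), I will need to verify that each $(2,2)$-entry is \emph{automatically} the Clifford conjugate of the corresponding $(1,1)$-entry, independently of whether the seven conditions hold. A short application of the identities $\overline{xyz} = \bar z \bar y \bar x$ and $\bar v = -v$ for $v \in V$ confirms this in each case, and that is what keeps the final list at seven items rather than nine or ten.
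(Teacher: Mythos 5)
Your proposal is correct and takes essentially the same route as the paper: both translate membership in $\Gamma(W)$ through the isomorphism $\phi$, recover (\ref{un})--(\ref{trois}) from $A\gamma(A)=\gamma(A)A=\lambda I_{2}$, and obtain (\ref{quatre})--(\ref{sept}) by expanding $A\phi(w)\alpha(A)^{-1}\in\phi(W)$ on the spanning set $f_{1},f_{2},V$, with the $\mathcal{V}^{0}(V)$ part handled by Lemma \ref{evenodd}. You have simply filled in the matrix computations and the converse (via Lemma \ref{invertible}) that the paper explicitly leaves to the reader; note only that checking the $(2,2)$-entry is the conjugate of the $(1,1)$-entry needs just the anti-involution property $\overline{xyz}=\bar z\bar y\bar x$ and $\overline{\bar x}=x$, not $\bar v=-v$.
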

\begin{proof}
If $A \in \mathcal{V}(V)$, then there exists $\lambda \in F^{\times}$ such that $A \cdot \gamma(A) = \gamma(A) \cdot A = \lambda \cdot I_{2}$.  Hence, (\ref{un}), (\ref{deux}) and (\ref{trois}) are satisfied.  Moreover,
\begin{equation*}
A^{-1} =  \frac{1}{\lambda} 
\begin{pmatrix}
d^{*} & -b^{*} \\
-c^{*} & a^{*}
\end{pmatrix}
= \frac{1}{\lambda}\gamma(A)
\qquad\hbox{so}\qquad
\alpha(A^{-1}) =  \frac{1}{\lambda}
\begin{pmatrix}
\bar{d} & \bar{b} \\
\bar{c} & \bar{a}
\end{pmatrix}= \frac{1}{\lambda}\beta(A).
\end{equation*}
Since $A \in \mathcal{V}(V)$, we necessarily have $A \cdot B \cdot \alpha(A)^{-1} \in \phi(W)$ for all $B \in \phi(W)$.  Expanding this out in terms of the entries of the matrix $A$ gives (\ref{quatre}), (\ref{cinq}), (\ref{six}) and (\ref{sept}).  The details are left to the reader.
\end{proof}

From now on, we let $H_{2}(V) = \phi(W)$, in other words
\begin{equation*}
H_{2}(V) = \left\{X= \begin{pmatrix}v & \lambda_{1} \\ \lambda_{2} & \bar{v} \end{pmatrix} \middle|\, v \in V \text{ and } \lambda_{1}, \lambda_{2} \in F \right\}.
\end{equation*}  
The $F$-vector space $H_{2}(V)$ has the structure of a non-singular orthogonal geometry coming from $W$ via the isomorphism $\phi$ whose quadratic form $Q$ is given by $Q(X) = v\bar{v} -\lambda_{1}\lambda_{2}$.  We will denote the symmetric $F$-bilinear form on $H_{2}(V)$ by $S$.  Also, given 
\begin{equation*}
A =
\begin{pmatrix}
a & b \\
c & d
\end{pmatrix} \in \mathcal{V}(V),
\qquad\hbox{we let}\qquad
A^{\sharp} = \alpha(A)^{-1} = \frac{1}{\lambda} 
\begin{pmatrix}
\bar{d} & \bar{b}\\
\bar{c} & \bar{a}
\end{pmatrix} = \frac{1}{\lambda}\beta(A),
\end{equation*}
where $\lambda = ad^{*} - bc^{*}$.  The Vahlen group $\mathcal{V}(V)$ acts on $H_{2}(V)$ via
$$A \cdot X = A  X A^{\sharp}, $$
whenever $A \in \mathcal{V}(V)$ and $X \in H_{2}(V)$.  We get a representation 
$$\eta:\mathcal{V}(V) \longrightarrow O(H_{2}(V)).$$  
Since $\phi$ restricted to $W$ gives us an isometry $W \stackrel{\simeq}{\longrightarrow} H_{2}(V)$,
we get an isomorphism of groups 
$$\Phi:O(W) \longrightarrow O(H_{2}(V))$$ 
given by $\Phi(\sigma) = \phi \circ \sigma \circ \phi^{-1}$ for $\sigma \in O(W)$.
\begin{The}
With the notation as above, we have the following commutative diagram
\begin{equation*}
  \begin{CD}
                 1    @>>>  F^{\times}      @>>>   \Gamma(W)       @>{\rho}>> O(W) @>>> 1  \\
                 & & @|   @VV{\phi}V       @VV{\Phi}V   \\
                 1    @>>>   F^{\times}      @>>>        \mathcal{V}(V)       @>{\eta}>> O(H_{2}(V)) @>>> 1 ,
        \end{CD}
\end{equation*}  
where the two vertical maps are isomorphisms of groups and the rows are exact.
\end{The}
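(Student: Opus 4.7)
The plan is to deduce the diagram from results already in place: the isomorphism $\phi:\mathcal{C}(W)\to M_2(\mathcal{C}(V))$, Lemma \ref{alpbetgam}, and the exact sequence in Proposition \ref{kernel} applied to $W$. I would first establish that the vertical arrows are isomorphisms, then verify commutativity by direct computation, and finally transport the exactness of the top row to the bottom row.

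For the vertical maps: since $\phi$ is an $F$-algebra isomorphism, it restricts to a group isomorphism $\mathcal{C}(W)^\times\to M_2(\mathcal{C}(V))^\times$. By the very definition $\mathcal{V}(V)=\phi(\Gamma(W))$, this restricts further to a group isomorphism $\phi:\Gamma(W)\to \mathcal{V}(V)$, and on $F^\times\subseteq\Gamma(W)$ it sends $\lambda\mapsto\lambda I_2\in F^\times\subseteq\mathcal{V}(V)$, which is exactly the identification making the leftmost vertical arrow the identity. The map $\phi|_W:W\to H_2(V)$ is an isometry by the construction of the quadratic form $Q$ on $H_2(V)$; thus conjugation by $\phi|_W$ yields a group isomorphism $\Phi:O(W)\to O(H_2(V))$, $\sigma\mapsto \phi\circ\sigma\circ\phi^{-1}$.

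For commutativity, fix $x\in\Gamma(W)$ and $w\in W$. By Lemma \ref{alpbetgam}, $\phi(x')=\alpha(\phi(x))$, whence $\phi((x')^{-1})=\alpha(\phi(x))^{-1}=\phi(x)^{\sharp}$. Using the Atiyah--Bott--Shapiro action,
\begin{equation*}
\Phi(\rho(x))(\phi(w)) \;=\; \phi(\rho(x)(w)) \;=\; \phi\bigl(xw(x')^{-1}\bigr) \;=\; \phi(x)\,\phi(w)\,\phi(x)^{\sharp} \;=\; \eta(\phi(x))(\phi(w)).
\end{equation*}
Since $\phi|_W$ maps $W$ bijectively onto $H_2(V)$, this yields $\Phi\circ\rho=\eta\circ\phi$. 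In particular $\eta(\mathcal{V}(V))\subseteq O(H_2(V))$, as required for the diagram to make sense.

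For exactness of the bottom row I would just chase the diagram. Injectivity of $F^\times\to\mathcal{V}(V)$ follows from injectivity of $F^\times\to\Gamma(W)$. If $\eta(A)=\mathrm{id}_{H_2(V)}$, write $A=\phi(x)$ with $x\in\Gamma(W)$; commutativity gives $\Phi(\rho(x))=\mathrm{id}$, hence $\rho(x)=\mathrm{id}$ since $\Phi$ is bijective, and Proposition \ref{kernel} then forces $x\in F^\times$, so $A\in F^\times\cdot I_2$. Conversely every scalar matrix clearly lies in $\ker(\eta)$. Finally, given $\tau\in O(H_2(V))$, Proposition \ref{kernel} produces $x\in\Gamma(W)$ with $\rho(x)=\Phi^{-1}(\tau)$, and then $\eta(\phi(x))=\Phi(\rho(x))=\tau$, giving surjectivity. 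There is no real obstacle here; the only point requiring care is tracking the identification $\phi(x')=\alpha(\phi(x))$ so that the Atiyah--Bott--Shapiro action $xw(x')^{-1}$ corresponds precisely to the Vahlen action $AXA^{\sharp}$.
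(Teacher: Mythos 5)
The paper itself only says ``The proof is simple and left to the reader,'' and your argument supplies precisely the expected verification: the key identity $\phi((x')^{-1}) = \alpha(\phi(x))^{-1} = \phi(x)^{\sharp}$ from Lemma \ref{alpbetgam} makes the Atiyah--Bott--Shapiro action $xw(x')^{-1}$ correspond to the Vahlen action $AXA^{\sharp}$, and the rest is a straightforward diagram chase using Proposition \ref{kernel} applied to $W$. Your proof is correct and matches the intended approach.
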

\begin{proof}
The proof is simple and left to the reader.
\end{proof}
Similarly, we have the following commutative diagram:
\begin{equation*}
  \begin{CD}
                 1    @>>>  F^{\times}      @>>>   \Gamma^{0}(W)       @>{\rho_{0}}>> SO(W) @>>> 1  \\
                 & & @|   @VV{\phi}V       @VV{\Phi}V   \\
                 1    @>>>   F^{\times}      @>>>        \mathcal{V}^{0}(V)       @>{\eta_{0}}>> SO(H_{2}(V)) @>>> 1 ,
  \end{CD}
\end{equation*}  
whose vertical arrows are isomorphisms, and where $\eta_{0}$ is the restriction of $\eta$ to $\mathcal{V}^{0}(V)$.

One can define a spinor norm for Vahlen groups as follows.  We let $N:\mathcal{V}(V) \longrightarrow F^{\times}$ be defined by $A \mapsto A \cdot \gamma(A)$.  We are now in the setting of \S \ref{abstract}, and we let
$$\mathcal{V}^{+}(V) = Pin^{+}(\eta,N), \qquad\hbox{and}\qquad S\mathcal{V}^{+}(V) = Spin^{+}(\eta_{0},N).$$
It is simple to check that
$$\mathcal{V}^{+}(V) = \phi(Pin^{+}(W)) \qquad\hbox{and}\qquad S\mathcal{V}^{+}(V) = \phi(Spin^{+}(W)).$$
In terms of matrices, we have the following result:
\begin{The} \label{vahlenfurther}
Let 
\begin{equation*}
A =
\begin{pmatrix}
a & b \\
c & d
\end{pmatrix} \in M_{2}(\mathcal{C}(V)).
\end{equation*}
Then,
\begin{enumerate}
\item $A \in \mathcal{V}^{+}(V)$ if and only if all conditions of Theorem \ref{vahlenexpand} are satisfied with $(\ref{un})$ is replaced by:
$$ad^{*} - bc^{*} = d^{*}a - b^{*}c = 1. $$ 
\item The matrix $A$ is in $S\mathcal{V}^{+}(V)$ if moreover the following condition is satisfied:
$$a,d \in \mathcal{C}^{0}(V) \text{ and } b,c \in \mathcal{C}^{1}(V).$$
\end{enumerate}

\end{The}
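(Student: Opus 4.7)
The plan is to unwind the definitions of $\mathcal{V}^{+}(V)$ and $S\mathcal{V}^{+}(V)$ and reduce the question to what was already established in Theorem \ref{vahlenexpand}. Recall that $\mathcal{V}^{+}(V) = Pin^{+}(\eta,N)$, which by the abstract definition of \S \ref{abstract} equals $\ker(N)$, where $N:\mathcal{V}(V) \longrightarrow F^{\times}$ is given by $N(A) = A \cdot \gamma(A)$. Thus the task is to identify $N(A)$ explicitly for $A \in \mathcal{V}(V)$.

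First I would invoke Theorem \ref{vahlenexpand}: if $A \in \mathcal{V}(V)$, then conditions (\ref{un})--(\ref{sept}) hold, and in particular (\ref{deux}) and (\ref{trois}) ensure via Lemma \ref{invertible} that
\begin{equation*}
A \cdot \gamma(A) = \gamma(A) \cdot A = \lambda \cdot I_{2}, \qquad \lambda = ad^{*}-bc^{*} = d^{*}a - b^{*}c \in F^{\times}.
\end{equation*}
Identifying $F^{\times}$ with its image $F^{\times} \cdot I_{2}$ inside $M_{2}(\mathcal{C}(V))^{\times}$, this says $N(A) = \lambda$. Hence $N(A) = 1$ precisely when $\lambda = 1$, which is exactly the statement that condition (\ref{un}) is replaced by $ad^{*}-bc^{*} = d^{*}a-b^{*}c = 1$. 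Conversely, any $A$ satisfying the modified (\ref{un}) together with (\ref{deux})--(\ref{sept}) lies in $\mathcal{V}(V)$ with $N(A) = 1$, hence in $\ker(N) = \mathcal{V}^{+}(V)$. This proves part (1).

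For part (2), by definition $S\mathcal{V}^{+}(V) = Spin^{+}(\eta_{0},N) = \ker(N|_{\mathcal{V}^{0}(V)}) = \mathcal{V}^{+}(V) \cap \mathcal{V}^{0}(V)$, using that $N$ restricted to $\mathcal{V}^{0}(V)$ is the same $N$. By Theorem \ref{vahlenexpand}, an element of $\mathcal{V}(V)$ lies in $\mathcal{V}^{0}(V)$ if and only if the parity condition 8 holds. Combining this with part (1) yields the second assertion.

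The argument is essentially formal once one has Theorem \ref{vahlenexpand} and Lemma \ref{invertible}; the only point that requires any care is the identification of $N(A)$ with the scalar $\lambda$, which was already obtained in the course of proving Theorem \ref{vahlenexpand}. No further calculation is required, so there is no substantial obstacle.
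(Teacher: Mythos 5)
Your proof is correct and follows essentially the same route as the paper's (very terse) proof, which simply appeals to the formula for $A\cdot\gamma(A)$ (the spinor norm) and to Lemma \ref{evenodd}; you have just spelled out the details of unwinding $\mathcal{V}^{+}(V)=\ker(N)$ and $S\mathcal{V}^{+}(V)=\ker(N|_{\mathcal{V}^{0}(V)})$.
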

\begin{proof}
This should be clear using Lemma \ref{evenodd} and the formula for $A \cdot \gamma(A)$ (which gives the spinor norm for matrices).
\end{proof}

\subsection{Change of fields} \label{changevahlen}
In this section, we let $F$ be a field of characteristic zero as before, and we let $E$ be a field extension of $F$.  Let $V$ be an orthogonal geometry over $F$, $P$ a hyperbolic plane and set $W = V \perp P$ as before.  Then, the morphism $\psi_{E}:\mathcal{C}(V) \hookrightarrow \mathcal{C}(V_{E})$ of \S \ref{changeoffields} induces an obvious injective morphism of $F$-algebras:
$$\widetilde{\psi}_{E}: M_{2}(\mathcal{C}(V)) \longrightarrow M_{2}(\mathcal{C}(V_{E})). $$
Note that $H_{2}(V_{E})$ is isometric to $H_{2}(V)_{E}$.  Also, we clearly have $\widetilde{\psi}_{E}(\mathcal{V}(V)) \subseteq \mathcal{V}(V_{E})$.  This leads to the following commutative diagram:
\begin{equation} \label{her1}
  \begin{CD}
                 1    @>>>  F^{\times}      @>>>   \mathcal{V}(V)       @>{\eta}>> O({H}_{2}(V)) @>>> 1  \\
                 & & @VVV @VV{\tilde{\psi}_{E}}V          @VVV   \\
                 1    @>>>   E^{\times}      @>>>        \mathcal{V}(V_{E})       @>{\eta_{E}}>> O(H_{2}(V_{E})) @>>> 1 ,
        \end{CD}
\end{equation}  
where the vertical arrows are all injective.  We also have the following important commutative diagram:
\begin{equation} \label{her2}
        \begin{CD}
                1 @>>> \{\pm 1 \} @>>> \mathcal{V}^{+}(V) @>{\eta}>> O(H_{2}(V)) @>{\vartheta}>> F^{\times}/F^{\times 2}  \\
                & & @VVV @VV{\tilde{\psi}_{E}}V @VVV @VVV \\
                1  @>>> \{\pm 1\} @>>> \mathcal{V}^{+}(V_{E}) @>{\eta_{E}}>> O(H_{2}(V_{E})) @>{\vartheta_{E}}>> E^{\times}/E^{\times 2},\\
        \end{CD}
\end{equation}
where only the far right vertical arrow is not injective.  One has a similar diagram for $S\mathcal{V}^{+}(V)$.

\section{Generalized Cartan matrices, system of simple roots and Weyl groups} \label{cartanmatrices}
Throughout this section, $F$ will be a field of characteristic zero; hence, in particular $\mathbb{Q} \subseteq F$.  Most of the proofs of this section will be omited, since they are standard or can be easily provided.

We remind the reader of the following definitions (see for instance page $1$ of \cite{Ka90}):
\begin{Def}
An $n \times n$ matrix $C = (c_{ij})$ is called a (generalized) Cartan matrix if it satisfies the following conditions:
\begin{enumerate}
\item $c_{ii} = 2$ for all $i = 1,\ldots,n$,
\item $c_{ij} \in \mathbb{Z}_{\le 0}$ for all $i,j$ satisfying $i \neq j$,
\item $c_{ij} = 0$ if and only if $c_{ji} = 0$.
\end{enumerate}
A Cartan matrix $C$ is called non-singular if it has full rank.  
\end{Def}

In order to study Weyl groups, we first introduce the notion of a system of simple roots.  Throughout this section, $V$ will be a finite dimensional orthogonal geometry over $F$ with a symmetric bilinear form $S$.  If $v \in V$ is a non-isotropic vector and $w \in W$ is arbitrary, then we defined the Cartan bracket $\langle v,w \rangle$ as usual by the formula
$$\langle v , w \rangle = \frac{2S(v,w)}{S(v,v)}.$$
\begin{Def}
A system of simple roots in $V$ consists of a basis $\Pi = \{\alpha_{1},\ldots,\alpha_{n} \}$ of $V$ such that
\begin{enumerate}
\item The elements of $\Pi$ are non-isotropic vectors,
\item The matrix $(\langle \alpha_{i},\alpha_{j}\rangle)$ is a Cartan matrix.
\end{enumerate}
A system of simple roots $(V,\Pi)$ is called non-singular if $V$ is a non-singular orthogonal geometry.
\end{Def}
A system of simple roots will typically be denoted by $(V,\Pi)$.  
\begin{Def} \label{sym}
A Cartan matrix $C$ is called symmetrizable over $F$ if $D \cdot C = B$, where $B,D \in M_{n}(F)$ are such that
\begin{enumerate}
\item $B$ is symmetric,
\item $D = {\rm diag}(\varepsilon_{1},\ldots,\varepsilon_{n})$ is diagonal,
\item ${\rm det}(D) \neq 0$.
\end{enumerate}
\end{Def}

\begin{Lem}
Let $(V,\Pi)$ be a system of simple roots with associated Cartan matrix $C$.  Then $C$ is symmetrizable.  Moreover, $C$ is non-singular if and only if $(V,\Pi)$ is non-singular.
\end{Lem}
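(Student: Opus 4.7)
The plan is to exhibit an explicit diagonal $D$ witnessing symmetrizability and to observe that $DC$ coincides with the Gram matrix of $S$ in the basis $\Pi$, from which the equivalence of non-singularities falls out.

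For the first claim, set $\varepsilon_i = \tfrac{1}{2}S(\alpha_i,\alpha_i)$ for $i=1,\dots,n$ and let $D=\mathrm{diag}(\varepsilon_1,\dots,\varepsilon_n)$. Since each $\alpha_i$ is non-isotropic, $\varepsilon_i \neq 0$, so $\det(D)\neq 0$. Writing out the $(i,j)$-entry of $D\cdot C$ gives
\[
(DC)_{ij}=\varepsilon_i\cdot \frac{2S(\alpha_i,\alpha_j)}{S(\alpha_i,\alpha_i)} = S(\alpha_i,\alpha_j),
\]
which is symmetric in $(i,j)$ because $S$ is. Thus $B:=DC$ satisfies the conditions of Definition \ref{sym}, and the matrix $C$ is symmetrizable over $F$.

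For the second claim, I would observe that the matrix $B=(S(\alpha_i,\alpha_j))$ just produced is nothing other than the Gram matrix of the symmetric bilinear form $S$ with respect to the basis $\Pi$. A standard fact (which can be recalled from \S\ref{generalities}): the map $V\to V^{*}$, $v\mapsto S(v,\cdot)$ is represented in the basis $\Pi$ (and its dual) by $B$, so its kernel is $\mathrm{Rad}(V)=0$ exactly when $\det(B)\neq 0$. Hence $(V,\Pi)$ is non-singular iff $\det(B)\neq 0$.

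Now combine: $\det(B)=\det(D)\det(C)=\varepsilon_1\cdots\varepsilon_n\cdot\det(C)$, and all $\varepsilon_i$ are nonzero, so $\det(B)\neq 0$ if and only if $\det(C)\neq 0$, that is, if and only if $C$ has full rank. Chaining the two equivalences yields: $C$ is non-singular iff $(V,\Pi)$ is non-singular.

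There is no real obstacle here; the only thing to be careful about is the direction of the scaling (choosing the $i$-th diagonal entry of $D$ to cancel $S(\alpha_i,\alpha_i)$ in the $i$-th row of $C$, not the $i$-th column), which is forced as soon as one writes out $(DC)_{ij}$.
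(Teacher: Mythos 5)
Your proof is correct, and since the paper omits the proof of this lemma as ``standard or easily provided,'' your argument is exactly the intended one: take $D=\mathrm{diag}\bigl(\tfrac12 S(\alpha_1,\alpha_1),\dots,\tfrac12 S(\alpha_n,\alpha_n)\bigr)$, observe that $DC$ is the Gram matrix $(S(\alpha_i,\alpha_j))$ of $S$ in the basis $\Pi$, and use the multiplicativity of the determinant together with $\det D\neq 0$ to transfer non-singularity between $C$ and $(V,S)$. Both parts, including the remark that the $\varepsilon_i$ are nonzero precisely because the $\alpha_i$ are non-isotropic, are handled cleanly.
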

Given any symmetrizable Cartan matrix $C$, there always exists a system of simple roots $(V,\Pi)$ with associated Cartan matrix $C$.  Indeed, let $V$ be a vector space of dimension $n$ over $F$, and let $\Pi = \{\alpha_{1},\ldots,\alpha_{n} \}$ be any basis for $V$.  Since $C$ is assumed to be symmetrizable, we can write 
\begin{equation} \label{symdec}
D \cdot C = B,
\end{equation} 
where $D = {\rm diag}(\varepsilon_{1},\ldots,\varepsilon_{n})$ is a diagonal matrix, $B$ a symmetric matrix and ${\rm det}(D) \neq 0$.  One can define a symmetric $F$-bilinear form $\kappa$ on $V$ via the formula $\kappa(\alpha_{i},\alpha_{j}) = b_{ij}$.  We then have an orthogonal geometry on $V$.  Now, one has $\kappa(\alpha_{i},\alpha_{i}) = 2\varepsilon_{i}$ and $\kappa(\alpha_{i},\alpha_{j}) = \varepsilon_{i}c_{ij}$.  It follows that the entries of the Cartan matrix $C$ satisfy
$$c_{ij} = \frac{2 \cdot \kappa(\alpha_{i},\alpha_{j})}{\kappa(\alpha_{i},\alpha_{i})} = \langle\alpha_{i},\alpha_{j} \rangle. $$
Therefore, $(V,\Pi)$ is a system of simple roots with associated Cartan matrix $C$.

\begin{Def}
A Cartan matrix $C = (c_{ij})$ is called reducible if there exists a permutation $\tau \in S_{n}$ such that $(c_{\tau(i) \tau(j)})$ is block diagonal with more than one block.  Otherwise, it is called irreducible.  
\end{Def}
Note that if $(V,\Pi)$ is a system of simple roots and $\Pi_{1} \subseteq \Pi$ is a non-empty subset, then $(W,\Pi_{1})$, where $W = {\rm Span}(\Pi_{1})$ is also a system of simple roots.
\begin{Def}
A system of simple roots $(V,\Pi)$ is called reducible if there exist $\Pi_{1},\Pi_{2} \subseteq \Pi$ satisfying the following conditions:
\begin{enumerate}
\item $\Pi_{1},\Pi_{2} \neq \varnothing$,
\item $\Pi = \Pi_{1} \sqcup \Pi_{2}$ (disjoint union),
\item $V = V_{1} \perp V_{2}$, where $V_{i} = Span(\Pi_{i})$, for $i=1,2$.
\end{enumerate}
Otherwise, $(V,\Pi)$ is called irreducible.
\end{Def}
It is clear that a system $(V,\Pi)$ of simple roots is irreducible if and only if the corresponding Cartan matrix is irreducible.
\begin{Pro}
Given any system of simple roots $(V,\Pi)$, one can find non-empty subsets $\Pi_{1},\ldots,\Pi_{s} \subseteq \Pi$ satisfying:
\begin{enumerate}
\item $\Pi = \Pi_{1} \sqcup \ldots \sqcup \Pi_{s}$,
\item $V = V_{1} \perp \ldots \perp V_{s}$, where $V_{i} = {\rm Span}(\Pi_{i})$, for $i=1,\ldots,s$,
\item $(V_{i},\Pi_{i})$ are irreducible systems of simple roots.
\end{enumerate}
\end{Pro}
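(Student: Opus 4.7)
The plan is to build the decomposition by introducing a connectivity graph on $\Pi$ and taking connected components. Concretely, define a relation on $\Pi$ by declaring $\alpha_i \sim \alpha_j$ whenever there exists a finite chain $\alpha_i = \beta_0, \beta_1, \ldots, \beta_k = \alpha_j$ of simple roots in $\Pi$ such that $S(\beta_\ell, \beta_{\ell+1}) \neq 0$ for every $\ell$. This is manifestly reflexive (empty chain), symmetric (reverse the chain), and transitive (concatenate chains), so it is an equivalence relation, and I take $\Pi_1, \ldots, \Pi_s$ to be its equivalence classes. Since $\Pi$ is a basis (hence finite and non-empty), the classes are non-empty and yield the disjoint union in (1).

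Next, I would verify condition (2). Setting $V_i = \mathrm{Span}(\Pi_i)$, the union $\Pi_1 \sqcup \cdots \sqcup \Pi_s = \Pi$ is a basis of $V$, so $V = V_1 + \cdots + V_s$ as a direct sum of subspaces. Orthogonality $V_i \perp V_j$ for $i \neq j$ reduces to checking $S(\alpha, \beta) = 0$ for $\alpha \in \Pi_i$, $\beta \in \Pi_j$; but if $S(\alpha,\beta) \neq 0$, then $\alpha$ and $\beta$ would be linked by a length-one chain, forcing $\Pi_i = \Pi_j$, a contradiction. Recalling that $\kappa(\alpha_i,\alpha_j) = \varepsilon_i c_{ij}$ up to the symmetrization of Definition~\ref{sym}, this equivalently says the Cartan matrix becomes block-diagonal after reordering the rows/columns according to the partition.

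For (3), I would first observe that each $(V_i, \Pi_i)$ is a system of simple roots: $\Pi_i$ is a basis of $V_i$ by construction, its elements are non-isotropic (inherited from $\Pi$), and the matrix $(\langle \alpha, \beta \rangle)_{\alpha,\beta \in \Pi_i}$ is a principal submatrix of the original Cartan matrix $C$, so the three defining properties of a Cartan matrix are inherited entrywise. Irreducibility of each $(V_i, \Pi_i)$ then follows from the maximality of the equivalence classes: if we had a further non-trivial partition $\Pi_i = \Pi_i' \sqcup \Pi_i''$ with $\mathrm{Span}(\Pi_i') \perp \mathrm{Span}(\Pi_i'')$, then no root in $\Pi_i'$ would be linked by a length-one chain to any root in $\Pi_i''$. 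Transitivity of $\sim$ would then prevent any chain at all connecting the two pieces, contradicting the fact that $\Pi_i$ is a single equivalence class.

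None of the steps look genuinely hard; the only thing requiring mild care is the irreducibility argument in the last step, where one must not confuse the absence of direct edges with the absence of a chain. Once the equivalence relation is set up, every other item reduces to a direct unwinding of definitions, and the only computation involved is the observation that a principal submatrix of a Cartan matrix is again a Cartan matrix (which is immediate from inspecting the three defining conditions).
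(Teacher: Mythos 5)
Your proof is correct. The paper deliberately omits a proof of this proposition (the section states that the standard proofs are omitted), and the connectivity/equivalence-class argument you give is precisely the standard one: it is complete, and in particular you handle the one subtle point correctly, namely that irreducibility of each $(V_i,\Pi_i)$ requires ruling out chains (not just single edges) between a putative splitting $\Pi_i'\sqcup\Pi_i''$, which you do via transitivity and the observation that a chain connecting two elements of $\Pi_i$ must stay inside $\Pi_i$.
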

Because of this last proposition, one can focus on irreducible Cartan matrices or equivalently on irreducible systems of simple roots.
\begin{Lem}
If $(V,\Pi)$ is an irreducible system of simple roots and $F$ a totally ordered field, then all the $S(\alpha_{i},\alpha_{i})$ have the same sign.
\end{Lem}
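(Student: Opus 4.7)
The plan is to reduce the sign-comparison problem to a connectivity argument on the Coxeter graph of $\Pi$, using the observation that adjacent simple roots automatically have $S(\alpha_{i},\alpha_{i})$ of the same sign.

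First I would compute, for $i\neq j$, the product
\[
c_{ij}\cdot c_{ji}=\frac{2S(\alpha_{i},\alpha_{j})}{S(\alpha_{i},\alpha_{i})}\cdot\frac{2S(\alpha_{j},\alpha_{i})}{S(\alpha_{j},\alpha_{j})}=\frac{4\,S(\alpha_{i},\alpha_{j})^{2}}{S(\alpha_{i},\alpha_{i})\,S(\alpha_{j},\alpha_{j})}.
\]
By the defining conditions of a Cartan matrix, $c_{ij},c_{ji}\in\mathbb{Z}_{\le0}$ with $c_{ij}=0$ iff $c_{ji}=0$, so $c_{ij}c_{ji}$ is a non-negative integer which is strictly positive exactly when $c_{ij}\neq 0$. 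In the latter case, since $F$ is totally ordered, $S(\alpha_{i},\alpha_{j})^{2}>0$ and hence $S(\alpha_{i},\alpha_{i})S(\alpha_{j},\alpha_{j})>0$, i.e.\ $S(\alpha_{i},\alpha_{i})$ and $S(\alpha_{j},\alpha_{j})$ have the same sign.

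Second, I would encode this as a graph. Define an equivalence relation on $\{1,\dots,n\}$ generated by $i\sim j$ whenever $c_{ij}\neq 0$ (this is symmetric by the third axiom of a Cartan matrix). The first step shows that within each equivalence class, all $S(\alpha_{k},\alpha_{k})$ carry the same sign, by propagating the sign along any chain $i=i_{0},i_{1},\dots,i_{r}=j$ of $\sim$-adjacent indices.

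Third, I would invoke irreducibility to conclude there is only one equivalence class. If there were a non-trivial partition $\{1,\dots,n\}=I_{1}\sqcup I_{2}$ into unions of equivalence classes, then $c_{ij}=0$ for all $i\in I_{1}$, $j\in I_{2}$; letting $\tau$ be the permutation listing the indices of $I_{1}$ followed by those of $I_{2}$ makes $(c_{\tau(i)\tau(j)})$ block diagonal with at least two blocks, contradicting irreducibility. Hence there is a single equivalence class, and all $S(\alpha_{i},\alpha_{i})$ share a common sign. The main (and really only) obstacle is the clean bookkeeping in the propagation step, but once the product formula for $c_{ij}c_{ji}$ is in hand, the argument is essentially forced.
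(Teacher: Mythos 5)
Your argument is correct and is precisely the standard one: the paper omits the proof of this lemma as routine, and your sign-propagation along the Dynkin/Coxeter graph, using $c_{ij}c_{ji}=4S(\alpha_i,\alpha_j)^2/\bigl(S(\alpha_i,\alpha_i)S(\alpha_j,\alpha_j)\bigr)\ge 0$ together with connectedness coming from irreducibility, is exactly the argument the authors intend.
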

Now, we introduce the notion of morphism between systems of simple roots.
\begin{Def}
A morphism between two systems of simple roots $(V_{1},\Pi_{1})$ and $(V_{2},\Pi_{2})$ is an $F$-linear transformation $\phi:V_{1} \longrightarrow V_{2}$ such that
\begin{enumerate}
\item $\phi(\Pi_{1}) \subseteq \Pi_{2}$,
\item $\langle \phi(\alpha),\phi(\beta) \rangle = \langle \alpha, \beta \rangle$ for all $\alpha, \beta \in \Pi_{1}$.
\end{enumerate}
\end{Def}
\begin{The} \label{morphismssystem}
If $\phi:(V_{1},\Pi_{1}) \longrightarrow (V_{2},\Pi_{2})$ is a morphism of systems of simple roots and $(V_{1},\Pi_{1})$ is irreducible, then $\phi$ is an orthogonal similitude.
\end{The}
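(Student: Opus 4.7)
The plan is to reduce the claim to checking proportionality on the basis $\Pi_1$, and then to use irreducibility to show that a single scalar works uniformly across all of $V_1$. Write $\Pi_1 = \{\alpha_1, \ldots, \alpha_n\}$ and let $S_1, S_2$ denote the bilinear forms on $V_1, V_2$. Because each $\alpha_i$ is non-isotropic by the definition of a system of simple roots, $S_1(\alpha_i,\alpha_i) \neq 0$; likewise $\phi(\alpha_i) \in \Pi_2$, so $S_2(\phi(\alpha_i), \phi(\alpha_i)) \neq 0$. Hence
\[
\lambda_i := \frac{S_2(\phi(\alpha_i),\phi(\alpha_i))}{S_1(\alpha_i,\alpha_i)} \in F^\times
\]
is well-defined and nonzero for every $i$, and my goal is to prove $\lambda_1 = \cdots = \lambda_n$.

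Unfolding the Cartan-bracket condition $\langle \phi(\alpha_i), \phi(\alpha_j)\rangle = \langle \alpha_i,\alpha_j\rangle$ and clearing the denominator on the left immediately gives
\[
S_2(\phi(\alpha_i), \phi(\alpha_j)) = \lambda_i \, S_1(\alpha_i,\alpha_j).
\]
Since $S_1$ and $S_2$ are symmetric, swapping $i$ and $j$ in this identity and comparing yields $\lambda_i\, S_1(\alpha_i,\alpha_j) = \lambda_j\, S_1(\alpha_i,\alpha_j)$, so $\lambda_i = \lambda_j$ whenever $S_1(\alpha_i, \alpha_j) \neq 0$. But, since $S_1(\alpha_i, \alpha_i) \neq 0$, the condition $S_1(\alpha_i,\alpha_j) \neq 0$ is equivalent to $c_{ij} \neq 0$ in the Cartan matrix of $(V_1, \Pi_1)$.

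The heart of the argument is now to propagate these equalities using irreducibility. The irreducibility of $(V_1, \Pi_1)$ is equivalent to its Cartan matrix not being permutation-similar to a nontrivial block-diagonal form, which in turn means precisely that the graph on $\{1, \ldots, n\}$ with an edge $\{i,j\}$ whenever $c_{ij} \neq 0$ is connected. Hence for any two indices $i, j$ there exists a chain $i = i_0, i_1, \ldots, i_k = j$ with each consecutive $c_{i_{r-1}i_r}$ nonzero; applying the previous step along the chain gives $\lambda_i = \lambda_{i_1} = \cdots = \lambda_j$. Let $\lambda$ denote this common value. Then $S_2(\phi(\alpha_i), \phi(\alpha_j)) = \lambda\, S_1(\alpha_i, \alpha_j)$ for all $i, j$, and since $\Pi_1$ is a basis of $V_1$ and $\phi$ is $F$-linear, extending bilinearly in both arguments yields $S_2(\phi(v), \phi(w)) = \lambda\, S_1(v, w)$ for all $v, w \in V_1$, so $\phi$ is an orthogonal similitude with factor $\lambda$.

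The only delicate step is the propagation through the connected graph; without irreducibility, each connected component of the Dynkin graph would contribute its own scalar, and a single similitude factor would generally not exist. Everything else is bookkeeping straight from the definitions, and neither an ordered-field hypothesis nor non-singularity of $V_2$ is required.
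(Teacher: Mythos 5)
The paper omits the proof of this theorem (the section's preamble says the proofs are standard), so there is nothing internal to compare against. Your argument is correct and is the standard one: compute the local scaling ratios $\lambda_i$ from the Cartan-bracket condition, observe that symmetry of the bilinear forms forces $\lambda_i=\lambda_j$ along any edge of the Dynkin graph (equivalently, whenever $c_{ij}\neq 0$), and use connectedness — which is precisely irreducibility — to propagate a single scalar $\lambda$ to all indices before extending bilinearly. Your closing observation that neither an ordered field nor non-singularity of $V_2$ is used is also accurate, as the paper's Definition of orthogonal similitude imposes no such hypotheses.
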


\begin{Lem} \label{symfactor}
Let $C$ be an irreducible symmetrizable Cartan matrix and assume that we are given two different symmetrizations $D_{1} \cdot C = B_{1}$ and $D_{2} \cdot C = B_{2}$, where the $D_{i}$ are non-singular and diagonal, and the $B_{i}$ are symmetric matrices.  Then, there exists $\lambda \in F^{\times}$ such that $\lambda D_{1} = D_{2}$.
\end{Lem}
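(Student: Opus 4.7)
The plan is to compare the two symmetrizations componentwise and use the irreducibility of $C$ to propagate a single scalar ratio across all diagonal entries.

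Write $D_{1} = \mathrm{diag}(\varepsilon_{1},\ldots,\varepsilon_{n})$ and $D_{2} = \mathrm{diag}(\delta_{1},\ldots,\delta_{n})$, with all $\varepsilon_{i},\delta_{i}\in F^{\times}$ since $D_{1},D_{2}$ are non-singular. The condition that $B_{k} = D_{k}C$ is symmetric translates, in entries, to the two families of relations
\begin{equation*}
\varepsilon_{i}c_{ij}=\varepsilon_{j}c_{ji}\qquad\hbox{and}\qquad\delta_{i}c_{ij}=\delta_{j}c_{ji}
\end{equation*}
for all $i,j$. Since $c_{ij}=0$ if and only if $c_{ji}=0$, both relations are automatic when $c_{ij}=0$. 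When $c_{ij}\neq 0$, we may divide and obtain
\begin{equation*}
\frac{\varepsilon_{i}}{\varepsilon_{j}} = \frac{c_{ji}}{c_{ij}} = \frac{\delta_{i}}{\delta_{j}},
\end{equation*}
which rearranges to $\delta_{i}/\varepsilon_{i} = \delta_{j}/\varepsilon_{j}$.

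Now I would package this as a connectedness argument on indices. Define the graph $G$ on vertex set $\{1,\ldots,n\}$ with an edge $\{i,j\}$ precisely when $c_{ij}\neq 0$ (equivalently $c_{ji}\neq 0$). The irreducibility of $C$ says exactly that $G$ is connected: if $G$ had components corresponding to a partition $\{1,\ldots,n\} = I_{1}\sqcup I_{2}$, then the permutation of rows and columns collecting $I_{1}$ before $I_{2}$ would put $C$ into block-diagonal form, contradicting irreducibility. The previous paragraph shows that the function $i\mapsto \delta_{i}/\varepsilon_{i}$ is constant on edges of $G$, hence constant on connected components, and hence globally constant by irreducibility. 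Call this common value $\lambda\in F^{\times}$; then $\delta_{i} = \lambda\varepsilon_{i}$ for every $i$, i.e.\ $D_{2} = \lambda D_{1}$, as required.

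The only non-routine point is the identification of irreducibility with connectedness of the graph $G$, and this is standard. Everything else is elementary entry-chasing, and no use of the specific Cartan-matrix axioms $c_{ii}=2$ or $c_{ij}\in\mathbb{Z}_{\le 0}$ is needed beyond the symmetry-of-support condition $c_{ij}=0\Leftrightarrow c_{ji}=0$, which is what lets the ratio $c_{ji}/c_{ij}$ be well-defined exactly when it is needed.
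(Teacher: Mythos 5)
Your proof is correct, and since the paper explicitly omits the proofs in this section as standard, this is exactly the expected argument: extract the entrywise relations $\varepsilon_i c_{ij} = \varepsilon_j c_{ji}$ and $\delta_i c_{ij} = \delta_j c_{ji}$ from symmetry of $D_kC$, observe that whenever $c_{ij}\neq 0$ they force $\delta_i/\varepsilon_i = \delta_j/\varepsilon_j$, and then use the identification of irreducibility of $C$ with connectedness of the graph on $\{1,\dots,n\}$ with edges where $c_{ij}\neq 0$ to propagate the constant ratio. Your observation that only the support condition $c_{ij}=0\Leftrightarrow c_{ji}=0$ (and not the full Cartan axioms) is used is a nice remark and correct.
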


\begin{Def}
Given $(V,\Pi)$, we define the Weyl group to be $\cW(\Pi) = \langle r_{\alpha} \, | \, \alpha \in \Pi  \rangle$.
\end{Def}

If $\phi:(V_{1},\Pi_{1}) \longrightarrow (V_{2},\Pi_{2})$ is an isomorphism of irreducible systems of simple roots, then it is simple to check that $\phi$ induces an isomorphism of groups $\tilde{\phi}:O(V_{1}) \stackrel{\simeq}{\longrightarrow} O(V_{2})$ defined by $\sigma \mapsto \tilde{\phi}(\sigma) = \phi \circ \sigma \circ \phi^{-1}$.
\begin{Lem}
If $\phi:(V_{1},\Pi_{1}) \longrightarrow (V_{2},\Pi_{2})$ is an isomorphism of irreducible systems of simple roots, then given any non-isotropic vector $v_{1} \in V_{1}$, the vector $\phi(v_{1})$ is also non-isotropic, and moreover
$$\tilde{\phi}(r_{v_{1}}) = r_{\phi(v_{1})}.$$
\end{Lem}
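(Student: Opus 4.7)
The plan is to reduce everything to the observation, guaranteed by Theorem~\ref{morphismssystem}, that $\phi$ is an orthogonal similitude. Concretely, there exists $\lambda \in F^\times$ with $S_2(\phi(v),\phi(w)) = \lambda\, S_1(v,w)$ for all $v,w \in V_1$. Everything else is a one-line calculation.

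First, for the non-isotropy claim: if $v_1 \in V_1$ is non-isotropic, then $q_1(v_1) = S_1(v_1,v_1) \neq 0$, so
\[
q_2(\phi(v_1)) = S_2(\phi(v_1),\phi(v_1)) = \lambda\, S_1(v_1,v_1) \neq 0,
\]
since $\lambda \in F^\times$. Hence $\phi(v_1)$ is non-isotropic, and in particular the reflection $r_{\phi(v_1)} \in O(V_2)$ is well-defined.

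Next, I would verify $\tilde{\phi}(r_{v_1}) = r_{\phi(v_1)}$ by checking that $\phi \circ r_{v_1} = r_{\phi(v_1)} \circ \phi$ on an arbitrary $w \in V_1$. Using $F$-linearity of $\phi$ and the defining formula for the reflections,
\[
\phi(r_{v_1}(w)) = \phi(w) - 2\cdot \frac{S_1(w,v_1)}{S_1(v_1,v_1)}\,\phi(v_1),
\]
while
\[
r_{\phi(v_1)}(\phi(w)) = \phi(w) - 2 \cdot \frac{S_2(\phi(w),\phi(v_1))}{S_2(\phi(v_1),\phi(v_1))}\,\phi(v_1) = \phi(w) - 2\cdot \frac{\lambda\, S_1(w,v_1)}{\lambda\, S_1(v_1,v_1)}\,\phi(v_1),
\]
and the two $\lambda$'s cancel, giving equality.

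There is really no obstacle beyond correctly citing Theorem~\ref{morphismssystem}; the irreducibility of $(V_1,\Pi_1)$ is used solely through that reference to upgrade $\phi$ from a mere morphism of systems of simple roots to an orthogonal similitude with a single global factor $\lambda$, which is what makes the numerator/denominator cancellation possible.
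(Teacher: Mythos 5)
The paper omits the proof of this lemma (the section states that proofs are ``standard or can be easily provided''), so there is no official proof to compare against. Your argument is correct and is exactly the intended one: Theorem~\ref{morphismssystem} upgrades $\phi$ to an orthogonal similitude, the scalar $\lambda$ makes non-isotropy transfer immediately, and $\lambda$ then cancels in the numerator and denominator of the reflection formula, so that $\phi \circ r_{v_1} = r_{\phi(v_1)} \circ \phi$, i.e.\ $\tilde\phi(r_{v_1}) = r_{\phi(v_1)}$.
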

As a corollary, we obtain
\begin{Cor}
If $\phi:(V_{1},\Pi_{1}) \longrightarrow (V_{2},\Pi_{2})$ is an isomorphism of irreducible systems of simple roots, then the group isomorphism $\tilde{\phi}$ of above induces an isomorphism $\cW(\Pi_{1}) \stackrel{\simeq}{\longrightarrow} \cW(\Pi_{2})$.
\end{Cor}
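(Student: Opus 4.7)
The plan is to show that $\tilde\phi$ carries the generating set of $\cW(\Pi_1)$ bijectively onto the generating set of $\cW(\Pi_2)$, whence its restriction yields the desired group isomorphism.

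First, I would unpack what ``isomorphism of systems of simple roots'' gives us. By definition a morphism $\phi$ satisfies $\phi(\Pi_1) \subseteq \Pi_2$. If $\phi$ is invertible, then applying the same property to $\phi^{-1}$ gives $\phi^{-1}(\Pi_2) \subseteq \Pi_1$, so $\phi(\Pi_1) = \Pi_2$ as sets. (Alternatively, both $\Pi_1$ and $\Pi_2$ are bases of $V_1$ and $V_2$ respectively, which must have the same dimension.) Moreover, Theorem \ref{morphismssystem} ensures that $\phi$ is an orthogonal similitude, so $\tilde\phi:O(V_1) \stackrel{\simeq}{\longrightarrow} O(V_2)$ is a well-defined isomorphism of groups via $\sigma \mapsto \phi\circ\sigma\circ\phi^{-1}$.

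Next, the key step is the preceding lemma, which tells us that for every non-isotropic $v \in V_1$,
\[
\tilde\phi(r_v) = r_{\phi(v)}.
\]
Specializing to $v = \alpha \in \Pi_1$ (the simple roots are non-isotropic by definition), we obtain $\tilde\phi(r_\alpha) = r_{\phi(\alpha)}$, and since $\phi(\alpha) \in \Pi_2$, this element lies in $\cW(\Pi_2) = \langle r_\beta \mid \beta \in \Pi_2 \rangle$. Thus $\tilde\phi$ sends each generator of $\cW(\Pi_1)$ to a generator of $\cW(\Pi_2)$, and being a group homomorphism it therefore restricts to a homomorphism $\cW(\Pi_1) \to \cW(\Pi_2)$.

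Finally, applying the same argument to $\phi^{-1}:(V_2,\Pi_2) \to (V_1,\Pi_1)$ (which is itself an isomorphism of systems of simple roots, since $\phi(\Pi_1)=\Pi_2$) gives a homomorphism $\widetilde{\phi^{-1}} = (\tilde\phi)^{-1}:\cW(\Pi_2) \to \cW(\Pi_1)$, which is inverse to the previous one. Hence the restriction $\tilde\phi|_{\cW(\Pi_1)}:\cW(\Pi_1)\stackrel{\simeq}{\longrightarrow}\cW(\Pi_2)$ is an isomorphism of groups. There is no real obstacle here; the only care required is to verify that $\phi(\Pi_1)=\Pi_2$ rather than mere containment, which is what promotes the homomorphism into an isomorphism.
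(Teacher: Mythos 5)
Your proof is correct and follows the same route the paper intends: the corollary is stated directly after the Lemma showing $\tilde\phi(r_{v_1}) = r_{\phi(v_1)}$, and one concludes by observing that $\tilde\phi$ carries the generating set $\{r_\alpha \mid \alpha \in \Pi_1\}$ onto $\{r_\beta \mid \beta \in \Pi_2\}$ (using $\phi(\Pi_1)=\Pi_2$), with $\widetilde{\phi^{-1}}$ providing the inverse. Nothing essential differs from what the paper has in mind.
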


In Chapter $4$ of \cite{Ka90}, Kac classifies the irreducible Cartan matrices in three distinct classes:  finite type, affine type, and indefinite type.  They correspond to finite-dimensional simple Lie algebras, affine Kac-Moody algebras and indefinite Kac-Moody algebras respectively.  The finite and affine type ones have been classified, and one can find their corresponding Dynkin diagrams in Chapter $4$ of \cite{Ka90}.  It is known that all finite and affine Cartan matrices are symmetrizable.  Among the indefinite Kac-Moody algebras, the so-called hyperbolic ones have been the most extensively studied.  
\begin{Def}
An irreducible Cartan matrix is called hyperbolic if it is symmetrizable of indefinite type and if every proper connected subdiagram of its corresponding Dynkin diagram is of finite or affine type.  The corresponding Kac-Moody algebras are called hyperbolic Kac-Moody algebras.
\end{Def}
The hyperbolic Cartan matrices of rank greater than or equal to $3$ and their corresponding Dynkin diagrams have also been classified.  It is known that there are no hyperbolic Cartan matrices of rank strictly greater than $10$.  See for instance \cite{Carbone:2010}.  For the purpose of this paper, we also introduce the following non-standard terminology.  If $C$ is an irreducible non-singular symmetrizable Cartan matrix, then one can write $D \cdot C = B$ for some non-singular matrices $D,B \in M_{n}(\mathbb{R})$ such that $D$ is diagonal and $B$ is symmetric with signature $(p,q)$.  Then, the number $\iota(C) := |p-q|$ does not depend on the choice of the decomposition $D \cdot C = B$ by Lemma \ref{symfactor}.
\begin{Def}
An irreducible non-singular symmetrizable Cartan matrix is called Lorentzian if $\iota(C) = 1$.
\end{Def}
We warn the reader that this is our own terminology, and the word Lorentzian can mean something else in the literature.  The corresponding Weyl groups and system of simple roots will be called of finite, affine, hyperbolic and Lorentzian type respectively.  Note that if $(V,\Pi)$ is a Lorentzian system of simple roots over $\mathbb{R}$, the Weyl group 
$\cW(\Pi)$ can be viewed as a subgroup of $O(\mathbb{R}^{n-1,1}) = O(\mathbb{R}^{1,n-1})$.

\subsection{A useful normalization} \label{normalization}
If $C$ is an irreducible symmetrizable Cartan matrix, then one can write $D \cdot C = B$ for some non-singular diagonal matrix $D$ and where $B$ is symmetric.  This decomposition is not unique, as Lemma \ref{symfactor} shows.  On the other hand, it follows from Lemma \ref{symfactor} that there is a unique matrix $D = {\rm diag}(\varepsilon_{1},\ldots,\varepsilon_{n})$ satisfying
\begin{enumerate}
\item $D$ is non-singular,
\item $\varepsilon_{i} \in \mathbb{Z}_{>0}$,
\item ${\rm gcd}(\varepsilon_{1},\ldots,\varepsilon_{n}) = 1$,
\item $D \cdot C$ is a symmetric matrix.  
\end{enumerate}
Note that $D \cdot C \in M_{n}(\mathbb{Z})$.  \emph{From now on, given an irreducible symmetrizable Cartan matrix $C$, we will always take the representation $D \cdot C = B$, where $D$ is normalized as above.}  (Note that if $C$ is symmetric, then $D$ is the identity matrix.)

\subsection{Canonical Lorentzian extensions}
As explained on page $71$ of \cite{Ka90}, starting with an irreducible system of simple roots of finite type, one can extend it to a Lorentzian system of simple roots.  Kac calls the resulting systems of simple roots ``canonical hyperbolic extensions''.  Unfortunately, they are not always hyperbolic systems of simple roots.  We shall rather refer to them as ``canonical Lorentzian extensions'' or ``canonical double extensions'', to avoid any ambiguity.  The purpose of this section is to recall how this is done.  

One starts with a Cartan matrix $C$ of finite type $T_{n}$ (where $T_{n} = A_{n} (n \ge 1),B_{n} (n \ge 2),C_{n} (n \ge 3),D_{n} (n \ge 4),E_{6},E_{7},E_{8},F_{4}$ or $G_{2}$).  The Cartan matrices of finite type are known to be symmetrizable; hence, we let $B = D \cdot C \in M_{n}(\mathbb{Z})$, where $D$ is the unique diagonal matrix normalized as explained in \S \ref{normalization}.  We let $V = F^{n}$ and we define an orthogonal geometry over $F$ via $\kappa(\alpha_{i},\alpha_{j})=b_{ij}$, where $\Phi = (\alpha_{1},\ldots,\alpha_{n})$ is the standard basis for $V$.  Then $(V,\Phi)$ is a system of simple roots with Cartan matrix $C$.  Letting $\theta$ be the highest root, we let $m = \kappa(\theta,\theta)$, and we set
$$W = V^{\frac{1}{m}} \perp P, $$
where $P$ is a hyperbolic plane.  We also fix a hyperbolic pair $(f_{1},f_{2})$ for $P$.  For the convenience of the reader, we list below the highest root for each type as well as the integer $m = \kappa(\theta,\theta)$.
\begin{align*}
A_{n} (n \ge 1) &&& \theta = \alpha_{1} + \ldots + \alpha_{n} && 2\\
B_{n} (n \ge 2) &&& \theta = \alpha_{1} + 2 \alpha_{2} + \ldots + 2 \alpha_{n} && 4\\
C_{n} (n \ge 3) &&& \theta = 2 \alpha_{1} + \ldots + 2\alpha_{n-1} + \alpha_{n} && 4\\
D_{n} (n \ge 4) &&& \theta = \alpha_{1} + 2\alpha_{2} + \ldots + 2 \alpha_{n-2} + \alpha_{n-1} + \alpha_{n} && 2\\
G_{2} &&& \theta = 2\alpha_{1} + 3\alpha_{2} && 6\\
F_{4} &&& \theta = 2 \alpha_{1} + 3 \alpha_{2} + 4 \alpha_{3} + 2 \alpha_{4} && 4\\
E_{6} &&& \theta = \alpha_{1} + 2 \alpha_{2} + 3\alpha_{3} + 2 \alpha_{4} + \alpha_{5} + 2\alpha_{6} && 2\\
E_{7} &&& \theta = 2 \alpha_{1} + 3\alpha_{2} + 4 \alpha_{3} + 3 \alpha_{4} + 2 \alpha_{5} + \alpha_{6} + 2 \alpha_{7} && 2\\
E_{8} &&& \theta = 2 \alpha_{1} + 3 \alpha_{2} + 4 \alpha_{3} + 5 \alpha_{4} + 6 \alpha_{5} + 4 \alpha_{6} + 2 \alpha_{7} + 3 \alpha_{8} && 2
\end{align*}
Setting
$$\alpha_{-1} = f_{1} - f_{2}\qquad\hbox{and}\qquad \alpha_{0} = -f_{1} - \theta, $$
and $\Pi = (\alpha_{-1},\alpha_{0},\alpha_{1},\ldots,\alpha_{n})$, one checks that $(W,\Pi)$ is a Lorentzian system of simple roots for a certain Cartan matrix which we will denote by $C^{++}$.  The corresponding system of simple roots will be said to be of type $T_{n}^{++}$.  

It turns out that the Dynkin diagram corresponding to $C^{++}$ can be obtained from the Dynkin diagram corresponding to the affine type root system $T_{n}^{(1)}$ after adding one vertex labelled $\alpha_{-1}$ connected to $\alpha_{0}$ by a single edge.  Again, for the convenience of the reader, we list below the Dynkin diagrams corresponding to $T_{n}^{(1)}$.  
\begin{align*}
A_1^{(1)} &&&
\begin{tikzpicture}[start chain]
\dnode{0}
\dnode{1}
\path (chain-1) -- node[anchor=mid] {\(\Longleftrightarrow\)} (chain-2);
\end{tikzpicture}
\end{align*}

\begin{align*}
A_n^{(1)} (n \ge 2) &&&
\begin{tikzpicture}[start chain,node distance=1ex and 2em]
\dnode{1}
\dnode{2}
\dydots
\dnode{n-1}
\dnode{n}
\begin{scope}[start chain=br going above]
\chainin(chain-3);
\node[ch,join=with chain-1,join=with chain-5,label={[inner sep=1pt]10:\(\alpha_{0}\)}] {};
\end{scope}
\end{tikzpicture}
\end{align*}

\begin{align*}
B_n^{(1)} (n \ge 3) &&&
\begin{tikzpicture}
\begin{scope}[start chain]
\dnode{0}
\dnode{2}
\dnode{3}
\dydots
\dnode{n-1}
\dnodenj{n}
\end{scope}
\begin{scope}[start chain=br going above]
\chainin(chain-2);
\dnodebr{1}
\end{scope}
\path (chain-5) -- node{\(\Rightarrow\)} (chain-6);
\end{tikzpicture}
\end{align*}

\begin{align*}
C_n^{(1)} (n \ge 2) &&&
\begin{tikzpicture}[start chain]
\dnodenj{0}
\dnodenj{1}
\dydots
\dnode{n-1}
\dnodenj{n}
\path (chain-1) -- node{\(\Rightarrow\)} (chain-2);
\path (chain-4) -- node{\(\Leftarrow\)} (chain-5);
\end{tikzpicture}
\end{align*}

\begin{align*}
D_n^{(1)} (n \ge 4) &&&
\begin{tikzpicture}
\begin{scope}[start chain]
\dnode{0}
\dnode{2}
\dnode{3}
\dydots
\dnode{n-2}
\dnode{n}
\end{scope}
\begin{scope}[start chain=br going above]
\chainin(chain-2);
\dnodebr{1};
\end{scope}
\begin{scope}[start chain=br2 going above]
\chainin(chain-5);
\dnodebr{n-1};
\end{scope}
\end{tikzpicture}
\end{align*}

\begin{align*}
G_2^{(1)} &&&
\begin{tikzpicture}[start chain]
\dnode{0}
\dnode{1}
\dnodenj{2}
\path (chain-2) -- node{\(\Rrightarrow\)} (chain-3);
\end{tikzpicture} 
\end{align*}

\begin{align*}
F_4^{(1)} &&&
\begin{tikzpicture}[start chain]
\dnode{0}
\dnode{1}
\dnode{2}
\dnodenj{3}
\dnode{4}
\path (chain-3) -- node[anchor=mid]{\(\Rightarrow\)} (chain-4);
\end{tikzpicture} 
\end{align*}
\begin{align*}
E_6^{(1)} &&&
\begin{tikzpicture}
\begin{scope}[start chain]
\foreach \dyi in {0,6,3,4,5} {
\dnode{\dyi}
}
\end{scope}
\begin{scope}[start chain=br going above]
\chainin(chain-3);
\dnodebr{2}
\dnodebr{1}
\end{scope}
\end{tikzpicture} 
\end{align*}

\begin{align*}
E_7^{(1)} &&&
\begin{tikzpicture}
\begin{scope}[start chain]
\foreach \dyi in {0,1,2,3,4,5,6} {
\dnode{\dyi}
}
\end{scope}
\begin{scope}[start chain=br going above]
\chainin(chain-4);
\dnodebr{7}
\end{scope}
\end{tikzpicture} 
\end{align*}

\begin{align*}
E_8^{(1)} &&&
\begin{tikzpicture}
\begin{scope}[start chain]
\foreach \dyi in {0,1,2,3,4,5,6,7} {
\dnode{\dyi}
}
\end{scope}
\begin{scope}[start chain=br going above]
\chainin(chain-6);
\dnodebr{8}
\end{scope}
\end{tikzpicture}
\end{align*}
As an example, one recovers from the table that the Cartan matrix corresponding to the canonical Lorentzian extension $B_{3}^{++}$ is given by
\begin{equation*}
\begin{pmatrix}
2 & -1 & 0 & 0 & 0 \\
-1 & 2 & 0 & -1 & 0 \\
0 & 0 & 2 & -1 & 0 \\
0 & -1 & -1 & 2 & -1 \\
0 & 0 & 0 & -2 & 2
\end{pmatrix}
\end{equation*}

Among the canonical Lorentzian extensions $T_{n}^{++}$, the following ones are hyperbolic:  $A_{n}^{++}$ for $n=1,2,3,4,5,6,7$, $B_{n}^{++}$ for $n=3,4,5,6,7,8$, $C_{n}^{++}$ for $n=2,3,4$, $D_{n}^{++}$ for $n=4,5,6,7,8$, $E_{n}^{++}$ for $n=6,7,8$, $F_{4}^{++}$, and $G_{2}^{++}$.  Among the hyperbolic Lorentzian canonical extensions $T_{n}^{++}$, the following ones have a symmetric Cartan matrix:  $A_{n}^{++}$ for $n=1,2,3,4,5,6,7$, $D_{n}^{++}$ for $n=4,5,6,7,8$, and $E_{n}^{++}$ for $n=6,7,8$.

\subsection{Change of fields}
As before, $F$ is a field of characteristic zero and $E/F$ is a field extension.  Throughout this section, $V$ will be a non-singular geometry.  Our goal in this section is to show that the Weyl group is preserved under base change, and this is the content of Proposition \ref{Weylpreservedunderbasechange} below.  If $V$ is an $F$-vector space, then we let $V_{E} = E \otimes_{F} V$.  

Suppose that $(V,\Pi)$ is a system of simple roots, where $\Pi = \{\alpha_{1},\ldots,\alpha_{n}\}$.  Define $\alpha_{i,E} \in V_{E}$ by $\alpha_{i,E} = 1 \otimes \alpha_{i}$ and let $\Pi_{E} = \{\alpha_{1,E},\ldots,\alpha_{n,E} \}$.  It is then simple to check that $(V_{E},\Pi_{E})$ is also a system of simple roots with associated Cartan matrix $C$.  We have an injective group morphism $\iota_{E}:O(V) \hookrightarrow O(V_{E})$.  If we let $r_{i,E}$ denote the simple reflection associated to $\alpha_{i,E}$ and $r_{i}$ the one associated to $\alpha_{i}$, then it is simple to check that $\iota_{E}(r_{i}) = r_{i,E}$.  Hence, we obtain the following result.
\begin{Pro} \label{Weylpreservedunderbasechange}
With the notation as above, we have $\iota_{E}(\cW(\Pi)) = \cW(\Pi_{E})$.
\end{Pro}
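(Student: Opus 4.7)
The plan is to piece this proposition together from the two observations recorded in the paragraph immediately preceding its statement: namely that $\iota_{E}:O(V)\hookrightarrow O(V_{E})$ is a group homomorphism, and that for each simple reflection one has $\iota_{E}(r_{i}) = r_{i,E}$. Given these, the proof is just the remark that a group homomorphism carries the subgroup generated by a set to the subgroup generated by its image, so
\begin{equation*}
\iota_{E}(\cW(\Pi)) \;=\; \iota_{E}\bigl(\langle r_{1},\ldots,r_{n}\rangle\bigr) \;=\; \langle \iota_{E}(r_{1}),\ldots,\iota_{E}(r_{n})\rangle \;=\; \langle r_{1,E},\ldots,r_{n,E}\rangle \;=\; \cW(\Pi_{E}).
\end{equation*}

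For completeness, I would verify $\iota_{E}(r_{i}) = r_{i,E}$ by a direct computation on pure tensors. Recall $r_{i}(v) = v - \frac{2S(v,\alpha_{i})}{S(\alpha_{i},\alpha_{i})}\alpha_{i}$ for $v\in V$. By the definition of $\iota_{E}$ (the restriction of the inclusion in \eqref{embedorthogonal}), $\iota_{E}(r_{i})(e\otimes v) = e\otimes r_{i}(v)$. Using the construction of $S_{E}$ on pure tensors one has $S_{E}(e\otimes v,1\otimes\alpha_{i}) = e\cdot S(v,\alpha_{i})$ and $S_{E}(\alpha_{i,E},\alpha_{i,E}) = S(\alpha_{i},\alpha_{i})$, so
\begin{equation*}
\iota_{E}(r_{i})(e\otimes v) \;=\; e\otimes v - \frac{2S_{E}(e\otimes v,\alpha_{i,E})}{S_{E}(\alpha_{i,E},\alpha_{i,E})}\,\alpha_{i,E} \;=\; r_{i,E}(e\otimes v),
\end{equation*}
and the equality extends from pure tensors to all of $V_{E}$ by $E$-linearity.

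There is essentially no hard step here; the content is purely formal bookkeeping. The only mild subtlety is confirming that base change is compatible with the reflection formula, which boils down to the fact that $S_{E}$ restricted to $1\otimes V$ agrees with $S$ under the identification $v\leftrightarrow 1\otimes v$. Once this is checked, the proposition is immediate.
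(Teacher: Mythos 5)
Your proof is correct and follows exactly the argument the paper sketches in the paragraph before the proposition: identify $\iota_{E}(r_{i})=r_{i,E}$ on simple reflections, then use that $\iota_{E}$ is a group homomorphism carrying generators to generators. The verification of $\iota_{E}(r_{i})=r_{i,E}$ on pure tensors is a nice filling-in of what the paper leaves as "simple to check."
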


\section{Weyl groups of the hyperbolic canonical Lorentzian extensions $T_{n}^{++}$ with symmetric Cartan matrices} \label{symmetric}
In this section, we let $F=\mathbb{Q}$, $V = \mathbb{Q}^{n}$ and we let $(\alpha_{1},\ldots,\alpha_{n})$ be the standard basis for $V$.  Starting with a Cartan matrix $C$ of finite type, we endow $V$ with an orthogonal geometry as follows.  We consider $B = D \cdot C$, where $D$ is the unique diagonal matrix normalized as in \S \ref{normalization}.  The orthogonal geometry on $V$ is given by $\kappa(\alpha_{i},\alpha_{j})=b_{ij}$.  We let 
$$\Lambda = \sum_{i = 1}^{n}\mathbb{Z} \cdot \alpha_{i} \subseteq V $$
be the corresponding root lattice.  Note that because of our choice of $D$, we have $\kappa(\Lambda,\Lambda) \subseteq \mathbb{Z}$.  If $\theta$ is the highest root, then we let $m = \kappa(\theta,\theta)$ and we consider $W = V^{\frac{1}{m}} \perp P$, where $P$ is a hyperbolic plane for which we fix a hyperbolic pair $(f_{1},f_{2})$.  We then have the universal Clifford algebra $\mathcal{C}(W)$ and an isomorphism of $\mathbb{Q}$-algebras $\Phi:\mathcal{C}(W) \stackrel{\simeq}{\longrightarrow} M_{2}(\mathcal{C}(V^{\frac{1}{m}}))$ induced by the $\mathbb{Q}$-linear transformation $W \longrightarrow M_{2}(\mathcal{C}(V^{\frac{1}{m}}))$ defined by
\begin{equation*}
v + \lambda_{1}f_{1} + \lambda_{2}f_{2} \mapsto  \begin{pmatrix} v & \lambda_{1} \\ \lambda_{2} & \bar{v}\end{pmatrix}.
\end{equation*}
As before, we let 
$$H_{2}(V^{\frac{1}{m}}) = \left\{X = \begin{pmatrix} v & \lambda_{1} \\ \lambda_{2} & \bar{v} \end{pmatrix} \middle| \, v \in V^{\frac{1}{m}} \text{ and } \lambda_{1}, \lambda_{2} \in \mathbb{Q} \right\}\subseteq M_{2}(\mathcal{C}(V^{\frac{1}{m}})). $$
The $\mathbb{Q}$-vector space $H_{2}(V^{\frac{1}{m}})$ has dimension $ n+ 2$ and its signature is $(n+1,1)$.  We recall that $H_{2}(V^{\frac{1}{m}})$ is an orthogonal geometry with quadratic form $Q$ given by $Q(X) = v \bar{v} - \lambda_{1}\lambda_{2}$.  Moreover, the group $\mathcal{V}(V^{\frac{1}{m}})$ acts on $H_{2}(V^{\frac{1}{m}})$ via $\eta(A)(X) =  AXA^{\sharp}$, and this gives the short exact sequence
$$1 \longrightarrow \mathbb{Q}^{\times} \longrightarrow \mathcal{V}(V^{\frac{1}{m}}) \stackrel{\eta}{\longrightarrow} O(H_{2}(V^{\frac{1}{m}})) \longrightarrow 1. $$
Moreover, if $X \in H_{2}(V^{\frac{1}{m}})$ is a non-isotropic vector, then
\begin{equation} \label{niceref}
\eta(X) = r_{X}. 
\end{equation}
The simple roots $\alpha_{i}$ ($i=-1,0,1,\ldots,n$) of the canonical Lorentzian extension are mapped to $X_{i} \in H_{2}(V^{\frac{1}{m}})$, where
$$X_{-1} = \begin{pmatrix} 0 & 1 \\  -1 & 0  \end{pmatrix} , \, \, \,X_{0} = \begin{pmatrix}-\theta & -1 \\ 0 & -\bar{\theta}  \end{pmatrix}\ \ \hbox{and}\ \  X_{i} = \begin{pmatrix} \alpha_{i} & 0 \\ 0 & \bar{\alpha}_{i}  \end{pmatrix}\ \ 
\hbox{for}\ \ i=1,\ldots,n.$$
The root lattice of this system of simple roots will be denoted by
$$\Lambda^{++} = \sum_{i=-1}^{n} \mathbb{Z} X_{i} \subseteq H_{2}(V^{\frac{1}{m}}). $$
\begin{Lem}
With the notation as above, one has $\Lambda^{++} = H_{2}(\Lambda)$, where
$$H_{2}(\Lambda) = \left\{\begin{pmatrix}v & n_{1}  \\ n_{2} & \bar{v}  \end{pmatrix} \middle| \, v \in \Lambda \text{ and } n_{1}, n_{2} \in \mathbb{Z} \right\} \subseteq M_{2}(\mathcal{C}(V^{\frac{1}{m}})). $$
\end{Lem}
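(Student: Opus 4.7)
The plan is to prove both inclusions directly; both are essentially straightforward verifications once one sets up the right identifications.

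\textbf{First, the easy inclusion $\Lambda^{++} \subseteq H_2(\Lambda)$.} Since $H_2(\Lambda)$ is closed under addition and $\mathbb{Z}$-scaling, it suffices to check that every generator $X_i$ lies in $H_2(\Lambda)$. For $i \ge 1$, the matrix $X_i$ already has the form $\left(\begin{smallmatrix} \alpha_i & 0 \\ 0 & \bar\alpha_i \end{smallmatrix}\right)$ with $\alpha_i \in \Lambda$ and $n_1=n_2=0$. For $X_{-1}$, we have $v=0 \in \Lambda$, $n_1=1$, $n_2=-1$. For $X_0$, since $\theta \in \Lambda$ (it is the highest root, a $\mathbb{Z}$-linear combination of the $\alpha_i$), we have $-\theta \in \Lambda$, with $n_1=-1$, $n_2=0$, and bottom-right entry $-\bar\theta = \overline{-\theta}$, exactly as required.

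\textbf{Second, the reverse inclusion $H_2(\Lambda) \subseteq \Lambda^{++}$.} The strategy is to show that $\Lambda^{++}$ contains enough ``building blocks'' to realize an arbitrary matrix $\left(\begin{smallmatrix} v & n_1 \\ n_2 & \bar v \end{smallmatrix}\right)$ with $v \in \Lambda$ and $n_1,n_2 \in \mathbb{Z}$. First, since $v \in \Lambda$ means $v = \sum_{i=1}^n m_i \alpha_i$ with $m_i \in \mathbb{Z}$, the identity
\begin{equation*}
\sum_{i=1}^n m_i X_i = \begin{pmatrix} v & 0 \\ 0 & \bar v \end{pmatrix}
\end{equation*}
shows that every ``diagonal part'' lies in $\Lambda^{++}$. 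In particular, taking $v=\theta$, the matrix $X_\theta := \left(\begin{smallmatrix} \theta & 0 \\ 0 & \bar\theta \end{smallmatrix}\right) \in \Lambda^{++}$.

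\textbf{Third, handle the off-diagonal entries.} A direct computation gives
\begin{equation*}
-X_0 - X_\theta = \begin{pmatrix} \theta & 1 \\ 0 & \bar\theta \end{pmatrix} - \begin{pmatrix} \theta & 0 \\ 0 & \bar\theta \end{pmatrix} = \begin{pmatrix} 0 & 1 \\ 0 & 0 \end{pmatrix},
\end{equation*}
so the elementary matrix $E_{12}$ is in $\Lambda^{++}$. Subtracting $X_{-1}$ from this yields
\begin{equation*}
E_{12} - X_{-1} = \begin{pmatrix} 0 & 1 \\ 0 & 0 \end{pmatrix} - \begin{pmatrix} 0 & 1 \\ -1 & 0 \end{pmatrix} = \begin{pmatrix} 0 & 0 \\ 1 & 0 \end{pmatrix} = E_{21},
\end{equation*}
so $E_{21} \in \Lambda^{++}$ as well. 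Combining these, an arbitrary element of $H_2(\Lambda)$ decomposes as
\begin{equation*}
\begin{pmatrix} v & n_1 \\ n_2 & \bar v \end{pmatrix} = \sum_{i=1}^n m_i X_i + n_1 E_{12} + n_2 E_{21} \in \Lambda^{++},
\end{equation*}
which completes the proof.

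\textbf{Expected obstacle.} There is no real obstacle here; the only subtlety is bookkeeping the Clifford conjugation correctly (recalling that $\bar\alpha_i = -\alpha_i$ and $\bar\theta = -\theta$ for vectors in $V$), so that the $(1,1)$ and $(2,2)$ entries of every $X_i$ are genuine conjugate pairs in the sense required by the definition of $H_2(\Lambda)$.
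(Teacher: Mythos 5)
Your proof is correct and is essentially the same computation as the paper's: the paper writes down the single identity
$\bigl(\begin{smallmatrix}0 & n_1 \\ n_2 & 0\end{smallmatrix}\bigr) = -(n_1+n_2)X_0 - n_2 X_{-1} - (n_1+n_2)X_\theta$,
which is precisely what you get by unpacking $n_1 E_{12} + n_2 E_{21}$ using your expressions $E_{12}=-X_0-X_\theta$ and $E_{21}=-X_0-X_\theta-X_{-1}$. Both approaches rely on the same observation that $X_\theta \in \Lambda^{++}$ (since $\theta$ is a $\mathbb{Z}$-linear combination of the $\alpha_i$), and your breakdown into the elementary matrices $E_{12}, E_{21}$ is a slightly more transparent way of organizing the same calculation.
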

\begin{proof}
The inclusion $\Lambda^{++} \subseteq H_{2}(\Lambda)$ is clear.  The other inclusion follows from the identity
\begin{equation*}
\begin{pmatrix}
0 & n_{1}  \\  n_{2} & 0
\end{pmatrix}
= -(n_{1} + n_{2})X_{0} - n_{2}X_{-1} - (n_{1} + n_{2}) \begin{pmatrix} \theta & 0 \\ 0 & \bar{\theta}  \end{pmatrix} \in \Lambda^{++}.
\end{equation*}
\end{proof}
We let
$$O(H_{2}(\Lambda)) = \left\{\sigma \in O(H_{2}(V^{\frac{1}{m}})) \, | \, \sigma\left(H_{2}(\Lambda) \right) = H_{2}(\Lambda) \right\}, $$
be the group of units of this lattice.  We also let
$$O^{+}(H_{2}(\Lambda)) = O(H_{2}(\Lambda)) \cap O^{+}(H_{2}(V^{\frac{1}{m}})). $$
The groups $SO(H_{2}(\Lambda))$ and $SO^{+}(H_{2}(\Lambda))$ are defined similarly.

From now on, we let $\mathcal{O}$ be the $\mathbb{Z}$-algebra generated by the $\alpha_{i}$, that is $\mathcal{O} = \mathbb{Z}[\alpha_{1},\ldots,\alpha_{n}] \subseteq \mathcal{C}(V^{\frac{1}{m}})$.  Then, $\mathcal{O}$ is an order in the finite dimensional $\mathbb{Q}$-algebra $\mathcal{C}(V^{\frac{1}{m}})$.  Note that $\mathcal{O}$ is closed under the (anti) involutions $', ^{*}$, and $\bar{}\,$.  
\begin{Lem} \label{inters}
The notation being as above and assuming that $m=2$, a $\mathbb{Z}$-basis for $\mathcal{O}$ is given by the elements $\{\alpha_{I} \, | \, I \in \Omega \}$.
\end{Lem}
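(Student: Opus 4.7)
The plan is to prove the two defining properties of a $\mathbb{Z}$-basis separately: that $\{\alpha_{I} \mid I \in \Omega\}$ is $\mathbb{Z}$-linearly independent and that it $\mathbb{Z}$-spans $\mathcal{O}$. The hypothesis $m=2$ will be used only for the spanning part; the hypothesis is what makes the structure constants of the Clifford relations among the $\alpha_{i}$'s integral.

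For linear independence, since $(\alpha_{1},\ldots,\alpha_{n})$ is a $\mathbb{Q}$-basis of $V^{\frac{1}{m}}$, the ordered products $\{\alpha_{I} \mid I \in \Omega\}$ form a $\mathbb{Q}$-basis of $\mathcal{C}(V^{\frac{1}{m}})$. This is a standard fact about Clifford algebras (it does not require the basis to be orthogonal, and can be obtained either by passing to the associated graded, which is the exterior algebra, or by counting: the $\alpha_{I}$ clearly span and there are $2^{n} = \dim_{\mathbb{Q}}\mathcal{C}(V^{\frac{1}{m}})$ of them). In particular, the $\alpha_{I}$ are $\mathbb{Z}$-linearly independent.

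For the spanning part, I first compute the Clifford relations satisfied by the $\alpha_{i}$ in $\mathcal{C}(V^{\frac{1}{m}})$. The bilinear form on $V^{\frac{1}{m}}$ equals $\kappa/m$, so
\begin{equation*}
\alpha_{i}\alpha_{j} + \alpha_{j}\alpha_{i} = -\frac{2}{m}\kappa(\alpha_{i},\alpha_{j}) = -\frac{2 b_{ij}}{m}\qquad (i \neq j), \qquad \alpha_{i}^{2} = -\frac{b_{ii}}{m} = -\frac{2}{m}.
\end{equation*}
Since $C$ is symmetric, the normalization of \S\ref{normalization} gives $D = I$, so $B = C \in M_{n}(\mathbb{Z})$; now specializing to $m=2$ yields $\alpha_{i}^{2} = -1$ and $\alpha_{i}\alpha_{j} + \alpha_{j}\alpha_{i} = -b_{ij} = -c_{ij} \in \mathbb{Z}$.

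These integral relations are all that is needed. By definition, $\mathcal{O}$ is $\mathbb{Z}$-spanned by arbitrary (unordered, possibly repeating) monomials $\alpha_{i_{1}}\alpha_{i_{2}}\cdots \alpha_{i_{k}}$. Using $\alpha_{i}\alpha_{j} = -\alpha_{j}\alpha_{i} - b_{ij}$ to swap adjacent factors with distinct indices, one picks up a $\mathbb{Z}$-coefficient and a shorter monomial; using $\alpha_{i}^{2} = -1$ to collapse equal adjacent indices, one strictly shortens the monomial with sign $-1$. A straightforward induction on length, with inner induction on the number of inversions of the index sequence, therefore expresses every monomial as a $\mathbb{Z}$-linear combination of the ordered $\alpha_{I}$. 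This gives $\mathcal{O} \subseteq \sum_{I \in \Omega}\mathbb{Z}\cdot \alpha_{I}$; the reverse containment is trivial.

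There is no real obstacle; the only point deserving emphasis is the role of the hypothesis $m=2$, since for $m > 2$ the structure constants $2b_{ij}/m$ become fractional and the $\mathbb{Z}$-span of $\{\alpha_{I}\}$ would be strictly smaller than $\mathcal{O}$. The hypothesis is therefore essential, which is consistent with the list of symmetric cases ($A_{n}^{++}, D_{n}^{++}, E_{n}^{++}$) where $m=2$ noted at the end of the preceding section.
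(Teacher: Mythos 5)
Your proof is correct and takes essentially the same approach as the paper: observe that the $\alpha_I$ are a $\mathbb{Q}$-basis for $\mathcal{C}(V^{1/m})$ (hence $\mathbb{Z}$-independent), and that when $m=2$ the Clifford structure constants $\alpha_i\alpha_j+\alpha_j\alpha_i=-\kappa(\alpha_i,\alpha_j)$ lie in $\mathbb{Z}$, so straightening any monomial yields only integer coefficients. Your inversion-counting induction merely makes explicit a reduction that the paper leaves to the reader.
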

\begin{proof}
Note that the elements $\{\alpha_{I} \, | \, I \in \Omega \}$ form a basis for the $\mathbb{Q}$-vector space $\mathcal{C}(V^{\frac{1}{m}})$.  In fact, if $m=2$ these elements form a $\mathbb{Z}$-basis for $\mathcal{O}$.  Indeed, it suffices to show that they generate $\mathcal{O}$ as a $\mathbb{Z}$-module, but this follows from the relation
$$\alpha_{i} \alpha_{j} + \alpha_{j} \alpha_{i} = -2\frac{1}{m}\kappa(\alpha_{i},\alpha_{j}) = - \kappa(\alpha_{i},\alpha_{j}), $$
and noting that $\kappa(\Lambda,\Lambda) \subseteq \mathbb{Z}$.  
\end{proof}
\emph{For the remainder of \S \ref{symmetric}, we assume that $m=2$ so that we are dealing with the simply laced canonical Lorentzian extensions (meaning that they have a symmetric Cartan matrix).}  We then set
$$\mathcal{V}(\mathcal{O}) = \mathcal{V}(V^{\frac{1}{2}}) \cap M_{2}(\mathcal{O})^{\times} \qquad\hbox{and}\qquad\mathcal{V}^{+}(\mathcal{O}) = \mathcal{V}^{+}(V^{\frac{1}{2}}) \cap M_{2}(\mathcal{O})^{\times}. $$
We define similarly the groups $S\mathcal{V}(\mathcal{O})$ and $S\mathcal{V}^{+}(\mathcal{O})$.
\begin{The} \label{vahlenexpandO}
Let 
\begin{equation*}
A =
\begin{pmatrix}
a & b \\
c & d
\end{pmatrix} \in M_{2}(\mathcal{O}).
\end{equation*}
Then $A \in \mathcal{V}(\mathcal{O})$ if and only if the following conditions are satisfied.
\begin{enumerate}
\item $ad^{*} - bc^{*} = d^{*}a - b^{*}c = \pm 1$, \label{1un}
\item $ba^{*} - ab^{*} = cd^{*} - dc^{*} = 0$, \label{2deux}
\item $a^{*}c - c^{*}a = d^{*}b - b^{*}d = 0$, \label{3trois}
\item $a \bar{a}, b \bar{b}, c \bar{c}, d \bar{d} \in \mathbb{Z}$, \label{4quatre}
\item $b \bar{d}, a \bar{c} \in \Lambda$, \label{5cinq}
\item $av\bar{b} + b\bar{v}\bar{a}, cv\bar{d} + d\bar{v}\bar{c} \in \mathbb{Z}$ for all $v \in \Lambda$, \label{6six}
\item $av\bar{d} + b\bar{v} \bar{c} \in \Lambda$ for all $v \in \Lambda$. \label{7sept}
\end{enumerate}
\end{The}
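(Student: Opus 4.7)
The plan is to deduce the theorem directly from its field-theoretic analogue, Theorem \ref{vahlenexpand} (applied with $V^{1/2}$ over $F=\mathbb{Q}$), by tracking integrality. Two facts about the order $\mathcal{O}$ will power the argument: (i) $\mathcal{O}$ is closed under each of the (anti)involutions $',{}^*$ and Clifford conjugation (so also $\gamma$ preserves $M_2(\mathcal{O})$); (ii) by Lemma \ref{inters}, the $\mathbb{Z}$-basis $\{\alpha_I\mid I\in\Omega\}$ splits $\mathcal{O}$ by Clifford degree, whence $\mathcal{O}\cap\mathbb{Q}=\mathbb{Z}$ and $\mathcal{O}\cap V^{1/2}=\Lambda$.

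For the forward implication, I would take $A\in\mathcal{V}(\mathcal{O})=\mathcal{V}(V^{1/2})\cap M_2(\mathcal{O})^{\times}$. Theorem \ref{vahlenexpand} gives (\ref{2deux}) and (\ref{3trois}) immediately, and provides the rational/$V^{1/2}$ versions of (\ref{4quatre})--(\ref{7sept}). Each quantity appearing in (\ref{4quatre})--(\ref{7sept}) is a polynomial in entries of $A$, their involutive images, and a vector $v\in\Lambda\subseteq\mathcal{O}$; by (i) all of them lie in $\mathcal{O}$. Intersecting the $\mathbb{Q}$-valued (resp.\ $V^{1/2}$-valued) field-theoretic constraints with $\mathcal{O}$ and invoking (ii) upgrades them to $\mathbb{Z}$-valued (resp.\ $\Lambda$-valued) statements, which is exactly the content of (\ref{4quatre})--(\ref{7sept}).

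The key remaining point is the refinement of (\ref{1un}) from $\lambda:=ad^*-bc^*\in\mathbb{Q}^{\times}$ to $\lambda=\pm 1$; this is the step where I would genuinely use $A^{-1}\in M_2(\mathcal{O})$ rather than just $A\in M_2(\mathcal{O})$. By Lemma \ref{invertible}, $A^{-1}=\lambda^{-1}\gamma(A)$, and since $\gamma$ is an $F$-linear involution on $M_2(\mathcal{C}(V^{1/2}))$, we have $\gamma(A^{-1})=\lambda^{-1}A$. Hence $N(A^{-1})=A^{-1}\gamma(A^{-1})=\lambda^{-1}$. Both $\lambda=N(A)$ and $\lambda^{-1}=N(A^{-1})$ are polynomial expressions in the entries of matrices lying in $M_2(\mathcal{O})$, so they belong to $\mathcal{O}\cap\mathbb{Q}=\mathbb{Z}$, forcing $\lambda=\pm 1$. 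I expect this norm argument to be the main obstacle; the rest is essentially bookkeeping.

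Conversely, assume (\ref{1un})--(\ref{7sept}) hold. Since $\mathbb{Z}\subseteq\mathbb{Q}$ and $\Lambda\subseteq V^{1/2}$, every hypothesis of Theorem \ref{vahlenexpand} is met, so $A\in\mathcal{V}(V^{1/2})$. By (i), $\gamma(A)\in M_2(\mathcal{O})$, and $A\cdot\gamma(A)=\pm I_2$ gives $A^{-1}=\pm\gamma(A)\in M_2(\mathcal{O})$. Thus $A\in\mathcal{V}(V^{1/2})\cap M_2(\mathcal{O})^{\times}=\mathcal{V}(\mathcal{O})$, completing the proof.
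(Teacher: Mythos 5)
Your proposal is correct and fills in exactly the details the paper leaves to the reader: the reduction to Theorem \ref{vahlenexpand} via $\mathcal{O}\cap\mathbb{Q}=\mathbb{Z}$, $\mathcal{O}\cap V^{1/2}=\Lambda$ (from Lemma \ref{inters}), closure of $\mathcal{O}$ under the (anti)involutions, and the norm argument $N(A),N(A^{-1})\in\mathbb{Z}$ with $N(A)N(A^{-1})=1$ forcing $\lambda=\pm1$. One small point in the converse: the inclusion $\Lambda\subseteq V^{1/2}$ alone does not upgrade conditions (\ref{6six}) and (\ref{7sept}) from ``for all $v\in\Lambda$'' to ``for all $v\in V^{1/2}$'', since a universal statement over a smaller set is a priori weaker; you should add that the expressions $av\bar b+b\bar v\bar a$, etc., are $\mathbb{Q}$-linear in $v$ and $\Lambda$ spans $V^{1/2}$ over $\mathbb{Q}$, so the field-level conditions follow.
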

\begin{proof}
When $m=2$, it follows from Lemma \ref{inters} that $\mathbb{Q} \cap \mathcal{O} = \mathbb{Z}$ and $V \cap \mathcal{O} = \Lambda$.  The rest of the proof is similar to the proof of Theorem \ref{vahlenexpand}, and the details are left to the reader.
\end{proof}

From this last theorem, we obtain the following result.
\begin{Pro}
With the notation being as above, one has
\begin{enumerate}
\item $\eta\left(\mathcal{V}(\mathcal{O}) \right) \subseteq O(H_{2}(\Lambda))$,
\item $\eta\left(\mathcal{V}^{+}(\mathcal{O}) \right) \subseteq O^{+}(H_{2}(\Lambda))$
\item $\eta\left(S\mathcal{V}(\mathcal{O}) \right) \subseteq SO(H_{2}(\Lambda))$
\item $\eta\left(S\mathcal{V}^{+}(\mathcal{O}) \right) \subseteq SO^{+}(H_{2}(\Lambda))$
\end{enumerate}
\end{Pro}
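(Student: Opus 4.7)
The plan is to prove part (1) by a direct entry-by-entry computation using the seven integrality conditions in Theorem \ref{vahlenexpandO}, and then to deduce parts (2)--(4) formally by intersecting with the appropriate real Vahlen subgroups.

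For part (1), I would take $A = \bigl(\begin{smallmatrix} a & b \\ c & d\end{smallmatrix}\bigr) \in \mathcal{V}(\mathcal{O})$ and $X = \bigl(\begin{smallmatrix} v & n_1 \\ n_2 & \bar v\end{smallmatrix}\bigr) \in H_2(\Lambda)$. Condition (\ref{1un}) of Theorem \ref{vahlenexpandO} gives $\lambda = ad^{*} - bc^{*} = \pm 1$, so
\[
A^{\sharp} = \frac{1}{\lambda}\beta(A) = \pm \begin{pmatrix} \bar d & \bar b \\ \bar c & \bar a \end{pmatrix}
\]
has all entries in $\mathcal{O}$. Multiplying out $\eta(A)(X) = AXA^{\sharp}$, the $(1,1)$ entry becomes $\pm\bigl[(av\bar d + b\bar v\bar c) + n_1(a\bar c) + n_2(b\bar d)\bigr]$, which lies in $\Lambda$ by conditions (\ref{5cinq}) and (\ref{7sept}). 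The $(1,2)$ entry becomes $\pm\bigl[(av\bar b + b\bar v\bar a) + n_1(a\bar a) + n_2(b\bar b)\bigr]$, which lies in $\mathbb{Z}$ by (\ref{4quatre}) and (\ref{6six}), and symmetrically for the $(2,1)$ entry. Since we already know from the surrounding theory that $\eta(A)(X) \in H_2(V^{1/2})$, the $(2,2)$ entry is forced to equal the Clifford conjugate of the $(1,1)$ entry, so no separate check is needed. Hence $\eta(A)(X) \in H_2(\Lambda)$, and since $A^{-1}$ also lies in $\mathcal{V}(\mathcal{O})$ (the same formula produces the inverse, still with entries in $\mathcal{O}$), $\eta(A)$ is an automorphism of the lattice $H_2(\Lambda)$.

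Parts (2), (3), (4) then follow almost formally. By definition $\mathcal{V}^{+}(\mathcal{O}) \subseteq \mathcal{V}^{+}(V^{1/2})$, and $\eta\bigl(\mathcal{V}^{+}(V^{1/2})\bigr) = O^{+}(H_2(V^{1/2}))$ from the identification $\phi$ of the pin theory with the Vahlen setup in \S \ref{Vahlen}. Combined with (1) this gives
\[
\eta\bigl(\mathcal{V}^{+}(\mathcal{O})\bigr) \subseteq O(H_2(\Lambda)) \cap O^{+}(H_2(V^{1/2})) = O^{+}(H_2(\Lambda)),
\]
which is (2). The same argument with $SO$ in place of $O$, using Lemma \ref{evenodd} to see that the even Vahlen group $S\mathcal{V}(\mathcal{O})$ lands in $SO(H_2(V^{1/2}))$, yields (3) and (4).

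I do not expect any real obstacle: the content of the proposition is essentially bookkeeping, because the integrality conditions (\ref{4quatre})--(\ref{7sept}) of Theorem \ref{vahlenexpandO} were packaged precisely so that the action $X \mapsto AXA^{\sharp}$ preserves $H_2(\Lambda)$. The only point that requires a touch of care is ensuring $\mathbb{Q}\cap\mathcal{O}=\mathbb{Z}$ and $V\cap\mathcal{O}=\Lambda$ so that one can conclude integrality of each entry from the computed elements, but this was already recorded in the proof of Theorem \ref{vahlenexpandO} using Lemma \ref{inters} (which relies critically on the assumption $m=2$ imposed in this section).
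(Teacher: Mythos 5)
Your proof is correct and follows exactly the route the paper intends: the paper's own proof just says the proposition ``follows directly from Theorem \ref{vahlenexpandO}'' and leaves the details to the reader, and your argument is precisely that omitted computation, plus the formal reduction of parts (2)--(4) to part (1) via the definitions $O^{+}(H_{2}(\Lambda)) = O(H_{2}(\Lambda)) \cap O^{+}(H_{2}(V^{1/2}))$ and similarly for $SO$ and $SO^{+}$.
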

\begin{proof}
This follows directly from Theorem \ref{vahlenexpandO}.  The details are left to the reader.
\end{proof}

Since we are assuming that $m=2$,  we have $Q(X_{i})=1$ for all $i=-1,0,1,\ldots,n$.  Therefore, the corresponding Weyl group, which we denote here by $\cW(C^{++})$, is a subgroup of 
$O^{+}(V^{\frac{1}{2}})$.  Letting
$$\Gamma = \langle X_{i} \, | \, i=-1,0,\ldots,m \rangle \le \mathcal{V}^{+}(\mathcal{O}), $$
we have $\eta(\Gamma) = \cW(C^{++})$ because of (\ref{niceref}).  Therefore, we have the following chain of subgroups:
\begin{equation} \label{fundincl}
\cW(C^{++}) \subseteq \eta\left( \mathcal{V}^{+}(\mathcal{O})\right) \subseteq O^{+}(H_{2}(\Lambda)).
\end{equation}
From now on, we let $\cP\mathcal{V}(\mathcal{O}) = \mathcal{V}(\mathcal{O})/\{\pm 1 \}$, and we define similarly $\cP\mathcal{V}^{+}(\mathcal{O})$, $\cP S\mathcal{V}(\mathcal{O})$, and $\cP S\mathcal{V}^{+}(\mathcal{O})$.

\begin{The} \label{main}
With the notation as above, one has that $\eta$ induces an isomorphism of groups
$$\eta:\cP\mathcal{V}^{+}(\mathcal{O}) \stackrel{\simeq}{\longrightarrow} \cW(C^{++}) $$
for the following hyperbolic canonical Lorentzian extensions:
\begin{enumerate}
\item $A_{n}^{++}$ for $n=1,2,3,4,5,6$,
\item $D_{n}^{++}$ for $n=5,6,7,8$,
\item $E_{n}^{++}$ for $n=6,7,8$.
\end{enumerate}
\end{The}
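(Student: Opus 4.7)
The plan is to break the isomorphism into (a) injectivity on the projective quotient $\cP\mathcal{V}^{+}(\mathcal{O})$, and (b) surjectivity onto $\cW(C^{++})$. Part (a) is essentially a corollary of the Pin-group framework from \S \ref{pinspin}: by the Vahlen-group analogue of Proposition \ref{kernel}, one has $\ker(\eta)\cap\mathcal{V}^{+}(V^{\frac{1}{2}})=\{\pm1\}$, and intersecting with $M_{2}(\mathcal{O})^{\times}$ does not enlarge the kernel, so $\ker(\eta)\cap\mathcal{V}^{+}(\mathcal{O})=\{\pm1\}$ and $\eta$ descends to an injection modulo $\{\pm1\}$. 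The chain (\ref{fundincl}) records the easy half of (b), namely $\cW(C^{++})\subseteq \eta(\mathcal{V}^{+}(\mathcal{O}))$, because each simple root $X_{i}$ lies in $\mathcal{V}^{+}(\mathcal{O})$ and maps under $\eta$ to the generating reflection $r_{X_{i}}$.

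The heart of the argument is to establish the reverse inclusion $\eta(\mathcal{V}^{+}(\mathcal{O}))\subseteq \cW(C^{++})$. Fix $A\in\mathcal{V}^{+}(\mathcal{O})$ and let $\sigma=\eta(A)\in O^{+}(H_{2}(\Lambda))$. Then $\sigma(X_{-1})$ is a vector of $H_{2}(\Lambda)$ with norm $Q(\sigma(X_{-1}))=1$. Here enters Kac's Corollary~5.10: for each symmetric hyperbolic Cartan matrix in the classes listed in the theorem, every norm-$1$ vector of $H_{2}(\Lambda)$ is a real root of $C^{++}$, and hence lies in a single Weyl orbit with the simple roots. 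Consequently there exists $w_{-1}\in\cW(C^{++})$ with $w_{-1}\sigma(X_{-1})=X_{-1}$; the element $w_{-1}\sigma$ then restricts to an isometry of the orthogonal sublattice $X_{-1}^{\perp}\cap H_{2}(\Lambda)$, which is itself an integral lattice of smaller rank whose real roots are precisely its norm-$1$ vectors by the same input.

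One then iterates: at each stage the same transitivity of the (restricted) Weyl group on norm-$1$ vectors allows us to produce $w_{i}$ sending the next simple root's image back to itself while preserving all simple roots already fixed. After $n+2$ steps we obtain a word $w=w_{n}\cdots w_{-1}\in\cW(C^{++})$ such that $w\sigma$ fixes every simple root $X_{i}$, hence is the identity on the spanning lattice $\Lambda^{++}=H_{2}(\Lambda)$. Thus $\sigma=w^{-1}\in\cW(C^{++})$. The main obstacle I anticipate is controlling the inductive step: verifying that the norm-one-is-a-real-root conclusion of Kac's Corollary~5.10 persists through each orthogonal restriction, and that the stabilizer of the already-fixed simple roots inside $\cW(C^{++})$ still acts transitively on the relevant norm-$1$ vectors. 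This transitivity is precisely the property that isolates the classes $A_{n}^{++}$ $(1\le n\le 6)$, $D_{n}^{++}$ $(5\le n\le 8)$, $E_{n}^{++}$ $(6\le n\le 8)$ among the symmetric hyperbolic extensions, with $A_{7}^{++}$ and $D_{4}^{++}$ excluded exactly because it fails there.
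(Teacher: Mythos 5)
Your injectivity step and the easy half of surjectivity (the chain~(\ref{fundincl})) match the paper, but the heart of your argument --- the inductive orbit-transitivity proof of the reverse inclusion --- both misquotes the key input and would, if it worked, prove too much.

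Kac's Corollary $5.10$ is \emph{not} the statement that every norm-$1$ vector of $H_{2}(\Lambda)$ is a real root; it is the structural decomposition
$$O(H_{2}(\Lambda)) = \pm {\rm Aut}(C^{++}) \ltimes \cW(C^{++}),$$
where ${\rm Aut}(C^{++})$ is the group of Dynkin diagram automorphisms. This immediately shows that your proposed argument cannot be correct as stated: your iterative ``fix one simple root at a time by Weyl transitivity'' procedure never uses the fact that $\sigma = \eta(A)$ lies in $O^{+}$ rather than all of $O(H_{2}(\Lambda))$, so it would ``prove'' that every lattice isometry belongs to $\cW(C^{++})$. That is false whenever ${\rm Aut}(C^{++})$ is nontrivial, which is the case for all types here except $A_1^{++}$, $E_7^{++}$, $E_8^{++}$. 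Concretely, your step producing $w_{-1}$ with $w_{-1}\sigma(X_{-1})=X_{-1}$ is legitimate when $\sigma(X_{-1})$ is a real root, but the subsequent reduction to an ``orthogonal sublattice of smaller rank'' where ``the same input'' applies is not well-founded: $X_{-1}^{\perp}\cap H_{2}(\Lambda)$ is not the root lattice of a hyperbolic system, so there is no Corollary $5.10$ (or transitivity statement) to cite there, and stabilizers of partial simple-root data in $\cW(C^{++})$ do not act transitively on the relevant norm-$1$ vectors. You also misdiagnose why $A_{7}^{++}$ and $D_{4}^{++}$ are excluded: it is not that transitivity fails, but that in those two cases there exist nontrivial diagram automorphisms with \emph{trivial} spinor norm, hence lying in $O^{+}(H_{2}(\Lambda))$, forcing $O^{+}(H_{2}(\Lambda)) \supsetneq \cW(C^{++})$.

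The paper's actual argument is short and quite different: it cites Corollary $5.10$ in the form above, observes $-{\rm id}\notin O^{+}(H_{2}(\Lambda))$, and then computes (via PARI, as in \S\ref{spinorouter}) the spinor norm $\vartheta(\pm a)$ for each outer automorphism $a$. Since every simple reflection $r_{X_i}$ has $Q(X_i)=1$ and hence trivial spinor norm, the Weyl group sits inside $O^{+}(H_{2}(\Lambda))$, and $O^{+}(H_{2}(\Lambda)) = \cW(C^{++})$ holds precisely when every nonidentity element of $\pm{\rm Aut}(C^{++})$ has nontrivial spinor norm. That computation identifies the listed families and, combined with~(\ref{fundincl}), gives $\cW(C^{++}) = \eta(\mathcal{V}^{+}(\mathcal{O})) = O^{+}(H_{2}(\Lambda))$. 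You should replace your orbit argument with this spinor-norm analysis; as written your proof has a genuine gap that cannot be patched without abandoning the inductive scheme.
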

\begin{proof}
Corollary $5.10$ of \cite{Ka90} shows that for each hyperbolic canonical Lorentzian extension with symmetric Cartan matrix, one has
$$O(H_{2}(\Lambda)) = \pm {\rm Aut}(C^{++}) \ltimes \cW(C^{++}), $$
where ${\rm Aut}(C^{++})$ is the group of outer automorphisms of the corresponding Dynkin diagram.  It is not hard
to see that $-id \notin O^{+}(H_{2}(\Lambda))$.  For each hyperbolic canonical Lorentzian extension with a symmetric Cartan matrix, we computed the spinor norm of all the automorphism $\pm a$, where $a$ is an outer automorphism.  It turns out that these spinor norms are non-trivial exactly in the cases listed in the theorem.  Thus, in those cases, the chain of subgroups (\ref{fundincl}) induces the following equalities
$$\cW(C^{++}) = \eta\left( \mathcal{V}^{+}(\mathcal{O})\right) = O^{+}(H_{2}(\Lambda)).$$
Our result follows from this last equality.  We did not find a conceptual way of computing the spinor norm of the outer automorphisms, so we used the software PARI (\cite{PARI}) to calculate those.  We explain these calculations in \S \ref{spinorouter} below.
\end{proof}

\subsection{Spinor norm of outer autormorphisms} \label{spinorouter}
Given an automorphism of a Dynkin diagram $\sigma$, it induces an automorphism of the root lattice via the formula $\sigma(\alpha_{i}) = \alpha_{\sigma(i)}$.  For example, the permutation $(1 \, 4)(2 \, 3)$ of the Dynkin diagram corresponding to $A_{4}^{++}$ is such an outer automorphism.  For each hyperbolic canonical Lorentzian extension with a symmetric Cartan matrix, we calculated the spinor norm of the automorphisms $\pm a$, where $a$ runs through the outer automorphisms, using PARI.  Each time, we found an orthogonal basis for the orthogonal geometry $V^{\frac{1}{2}}$, and we used Theorem \ref{CartanDieudonne} to write the outer automorphism as a product of reflections.  One can then calculate the spinor norm of the outer automorphisms.  We summarize our calculations below.

\begin{enumerate}
\item $A_{1}^{++}$.  There are no outer automorphism.
\item $A_{2}^{++}$.  There is a unique non-trivial outer automorphism $a$ and $\vartheta(a) = 3, \vartheta(-a) = -1$.
\item $A_{3}^{++}$.  There is a unique non-trivial outer automorphism $a$ and $\vartheta(a) = 2, \vartheta(-a) = -1$.
\item $A_{4}^{++}$.  There is a unique non-trivial outer automorphism $a$ and $\vartheta(a) = 5, \vartheta(-a) = -1$.
\item $A_{5}^{++}$.  There is a unique non-trivial outer automorphism $a$ and $\vartheta(a) = 3, \vartheta(-a) = -1$.
\item $A_{6}^{++}$.  There is a unique non-trivial outer automorphism $a$ and $\vartheta(a) = 7, \vartheta(-a) = -1$.
\item $A_{7}^{++}$.  There is a unique non-trivial outer automorphism $a$ and $\vartheta(a) = 1, \vartheta(-a) = -1$.
\item $D_{4}^{++}$.  There are $5$ non-trivial outer automorphisms corresponding to the following permutations of the simple roots:  $(1 \, 4), (1 \, 3), (3 \, 4), (1 \, 3 \, 4)$ and $(1 \, 4 \, 3)$.  We calculated $\vartheta((1 \, 4)) = \vartheta((1 \, 3)) = \vartheta((3 \, 4)) =2$.  Hence, $\vartheta((1 \, 3 \, 4)) = \vartheta((1 \, 4 \, 3)) = 1$.  Moreover, $\vartheta(-id) = -1$.
\item $D_{5}^{++}$.  There is a unique non-trivial outer automorphism $a$ and $\vartheta(a) = 2, \vartheta(-a) = -1$.
\item $D_{6}^{++}$.  There is a unique non-trivial outer automorphism $a$ and $\vartheta(a) = 2, \vartheta(-a) = -2$.
\item $D_{7}^{++}$.  There is a unique non-trivial outer automorphism $a$ and $\vartheta(a) = 2, \vartheta(-a) = -1$.
\item $D_{8}^{++}$.  There is a unique non-trivial outer automorphism $a$ and $\vartheta(a) = 2, \vartheta(-a) = -2$.
\item $E_{6}^{++}$.  There is a unique non-trivial outer automorphism $a$ and $\vartheta(a) = 3, \vartheta(-a) = -1$.
\item $E_{7}^{++}$ and $E_{8}^{++}$.  There are no outer automorphism.
\end{enumerate}

\section{Connections with previous descriptions of the Weyl group} \label{connections}
In this section, we would like to explain the connection between our approach and some previous results contained in \cite{FeKlNi09}.

\subsection{Paravectors}  \label{paravector}
Throughout \S \ref{paravector}, we let $U$ be a non-singular orthogonal geometry and $\mathcal{C}$  a universal Clifford algebra for $U$.  We denote the corresponding quadratic form on $U$ by $q$.

\begin{Def}
The paravectors in $\mathcal{C}$ are defined to be the $F$-vector space $U_{para} = F \oplus U \subseteq \mathcal{C}$.
\end{Def} 
Note that if $x \in U_{para}$, then $x^{*} = x$ and $x' = \bar{x}$.  We define a symmetric $F$-bilinear form of $U_{para}$ via the formula
$$S_{para}(x,y) =\frac{1}{2}(x\bar{y}+y\bar{x}), $$
whenever $x,y \in U_{para}$.  The corresponding quadratic form will be denoted by $q_{para}$; hence, $q_{para}(x) = x \bar{x}$ for all $x \in U_{para}$ and one can check that $q_{para}(x) = x\bar{x} = \bar{x}x = q_{para}(\bar{x})$ whenever $x \in U_{para}$.  It is also simple to check the equality
\begin{equation} \label{usebelow}
S_{para}(x,\bar{y}) = S_{para}(\bar{x},y),
\end{equation}  
whenever $x,y \in U_{para}$.
If $x = \lambda_{1} + u_{1}$ and $y = \lambda_{2} + u_{2}$ for some $\lambda_{i} \in F$ and $u_{i} \in U$, then
$$S_{para}(x,y)= \lambda_{1} \lambda_{2} + S(u_{1},u_{2}). $$
It follows that if $u_{1},u_{2} \in U$, then $S_{para}(u_{1},u_{2}) = S(u_{1},u_{2})$, and if $\lambda_{1},\lambda_{2} \in F$, then $S_{para}(\lambda_{1},\lambda_{2}) = \lambda_{1}\lambda_{2}$.  Moreover, if $\lambda \in F$ and $u \in U$, then $S_{para}(\lambda,u) = 0$. Therefore, we have $U_{para} = F \perp U$.  Recall that $\mathcal{C}^{\times}$ acts on $\mathcal{C}$ via the Atiyah-Bott-Shapiro action.
\begin{Def}
Similarly as we did for $U$, we define a Clifford group as follows.  We let
$$\Gamma(U_{para}) = \{x \in \mathcal{C}^{\times} \, | \, x * U_{para} \subseteq U_{para} \}. $$
\end{Def}
Here is the relationship between $\Gamma(U)$ and $\Gamma(U_{para})$.
\begin{Pro}
With the notation as above, we always have $\Gamma(U) \subseteq \Gamma(U_{para})$.
\end{Pro}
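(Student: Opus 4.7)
The plan is to reduce the statement to checking non-isotropic vectors of $U$ by invoking Corollary \ref{clifford_generate}, which tells us that $\Gamma(U)$ is generated by such vectors. Since $\Gamma(U_{para})$ is by definition a subgroup of $\mathcal{C}^{\times}$, it will suffice to verify that every non-isotropic $v \in U$ satisfies $v * U_{para} \subseteq U_{para}$, i.e., $v \in \Gamma(U_{para})$.

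Fix such a non-isotropic $v \in U$. First I would record that, since $v \in U \subseteq \mathcal{C}^{1}$, the principal involution gives $v' = -v$, and so $(v')^{-1} = -v^{-1}$ (noting that $v \in \mathcal{C}^{\times}$ because $v^{2} = -q(v) \in F^{\times}$). Next, pick an arbitrary $y \in U_{para}$ and decompose it as $y = \lambda + u$ with $\lambda \in F$ and $u \in U$, using $U_{para} = F \perp U$. A direct computation then gives
\[
v * y \;=\; v\,y\,(v')^{-1} \;=\; v(\lambda + u)(-v^{-1}) \;=\; -\lambda \;-\; v\,u\,v^{-1}.
\]
The first summand clearly lies in $F \subseteq U_{para}$. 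For the second, observe that $-v u v^{-1} = v u (v')^{-1} = v * u$, which lies in $U$ because $v \in \Gamma(U)$; hence $v u v^{-1} \in U$ as well. Consequently $v * y \in F \oplus U = U_{para}$, which proves $v \in \Gamma(U_{para})$.

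Finally, I would close the argument by combining the two ingredients: $\Gamma(U_{para})$ is a group, every non-isotropic vector of $U$ lies in $\Gamma(U_{para})$ by the computation above, and these non-isotropic vectors generate $\Gamma(U)$ by Corollary \ref{clifford_generate}. Therefore $\Gamma(U) \subseteq \Gamma(U_{para})$, as desired.

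There is no real obstacle here; the only point requiring care is tracking the sign introduced by the principal involution when passing from the action $x * y = x y (x')^{-1}$ on $U$ to the same action on the enlarged space $F \oplus U$. Once the identity $(v')^{-1} = -v^{-1}$ is recorded for $v \in U$, everything reduces to the observation that the scalar part of $U_{para}$ is central and that the ABS action of a vector on a vector already lands in $U$.
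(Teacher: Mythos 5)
Your proof is correct, and it rests on the same two ingredients as the paper's: the decomposition $y=\lambda+u$ of a paravector into scalar and vector parts, and Corollary \ref{clifford_generate}. The only organizational difference is that you invoke the corollary up front to reduce to a single non-isotropic generator $v$ and then track the sign $(v')^{-1}=-v^{-1}$ explicitly, whereas the paper works with an arbitrary $x\in\Gamma(U)$, dispatches the vector part $xu(x')^{-1}\in U$ directly from the definition of $\Gamma(U)$, and only invokes the corollary to show $x(x')^{-1}\in F$ for the scalar part; both routes are equally rigorous and of comparable length.
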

\begin{proof}
Let $x \in \Gamma(U)$ and let $y = \lambda + u \in U_{para}$.  Then, we have $xy(x')^{-1} = x\lambda(x')^{-1} + xu(x')^{-1}$.  Since $x \in \Gamma(U)$, we have $xu(x')^{-1} \in U$ and we just have to show that $x\lambda (x')^{-1} \in F$, but this follows directly from Corollary \ref{clifford_generate}.
\end{proof}

From now on, we let $\rho_{para}$ denote the group morphism $\Gamma(U_{para}) \longrightarrow GL(U_{para})$ induced by the Atiyah-Bott-Shapiro action of $\Gamma(U_{para})$ on $U_{para}$.  In fact, $\rho_{para}(\Gamma(U_{para})) \subseteq O(U_{para})$.  Indeed, if $x \in \Gamma(U_{para})$ and $y \in U_{para}$, then
\begin{equation*}
q_{para}(xy(x')^{-1}) = xy(x')^{-1}(xy(x')^{-1})'= xyy'x^{-1} = q_{para}(y).
\end{equation*}
It follows that we have a group morphism $\rho_{para}:\Gamma(U_{para}) \longrightarrow O(U_{para})$.  If $x \in U_{para}$ is a non-isotropic vector, then $x \in \mathcal{C}^{\times}$ and its inverse is obviously given by
$$x^{-1} = \frac{\bar{x}}{q_{para}(x)}. $$
\begin{Pro} \label{imagenoniso}
For $x \in U_{para}$ non-isotropic and $y \in U_{para}$, we have $\rho_{para}(x)(y) = -r_{x}(\bar{y})$.
\end{Pro}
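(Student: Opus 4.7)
The plan is to unwind the Atiyah--Bott--Shapiro action and reduce everything to a short computation involving the polarization identity for $q_{para}$. First, I would note that for $x \in U_{para}$ one has $x' = \bar{x}$ (stated just after the definition of paravectors), so by definition
$$\rho_{para}(x)(y) \;=\; x y (x')^{-1} \;=\; x y \bar{x}^{-1}.$$
Since $x$ is non-isotropic, $q_{para}(x) = x \bar{x} = \bar{x} x \in F^{\times}$, so $\bar{x}^{-1} = x / q_{para}(x)$, and the identity I need to establish becomes
$$\frac{x y x}{q_{para}(x)} \;=\; -r_{x}(\bar{y}) \;=\; -\bar{y} + \frac{2\, S_{para}(\bar{y},x)}{q_{para}(x)}\, x.$$

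Next, I would derive a Clifford-type commutation identity for paravectors. By the very definition of $S_{para}$ one has $z \bar{w} + w \bar{z} = 2 S_{para}(z,w)$ for all $z, w \in U_{para}$. Applying this with $w = \bar{y}$ and using $\bar{\bar{y}} = y$ gives
$$x y + \bar{y}\, \bar{x} \;=\; 2\, S_{para}(x, \bar{y}).$$
Multiplying on the right by $x$ and using $\bar{x}\, x = q_{para}(x)$ yields
$$x y x \;=\; 2\, S_{para}(x,\bar{y})\, x \;-\; q_{para}(x)\, \bar{y}.$$

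Dividing by $q_{para}(x)$ and using the symmetry of $S_{para}$ (so that $S_{para}(x,\bar{y}) = S_{para}(\bar{y},x)$) exactly matches the target formula for $-r_{x}(\bar{y})$. There is no real obstacle: the content of the statement is the polarization identity $z\bar{w} + w\bar{z} = 2S_{para}(z,w)$ together with the two paravector-specific facts $x' = \bar{x}$ and $\bar{x} x = q_{para}(x)$; the rest is bookkeeping. The only mild subtlety is being consistent about conjugating vs.\ not conjugating the argument (which is why $\bar{y}$ appears on the right-hand side), and this is dictated precisely by the substitution $w = \bar{y}$ in the polarization identity.
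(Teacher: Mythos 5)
Your proof is correct and is essentially the same computation the paper performs: unwind the Atiyah--Bott--Shapiro action using $x' = \bar x$ and $\bar x^{-1} = x/q_{para}(x)$, then apply the polarization identity $z\bar w + w\bar z = 2S_{para}(z,w)$ to rewrite $xyx$. The only cosmetic difference is that the paper parenthesizes as $x\cdot(yx)$, substituting $z=y,\,w=\bar x$, and therefore invokes the auxiliary identity $S_{para}(y,\bar x)=S_{para}(\bar y,x)$ from (\ref{usebelow}), whereas your parenthesization $(xy)\cdot x$ with $z=x,\,w=\bar y$ needs only the ordinary symmetry of $S_{para}$.
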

\begin{proof}
This is a simple calculation using (\ref{usebelow}) above and it goes as follows:
\begin{equation*}
x \cdot y \cdot (x')^{-1} = x \cdot y \cdot \frac{x}{q_{para}(x)} = \frac{1}{q_{para}(x)} x \cdot (- \bar{x} \cdot \bar{y} + 2 S_{para}(y,\bar{x})) = -r_{x}(\bar{y}).
\end{equation*}
\end{proof}
In particular, it follows from this last proposition that the non-isotropic vector $x \in U_{para}$ are in $\Gamma(U_{para})$.  Also, if $x \in U_{para}$ is non-isotropic, then $\rho_{para}(x) \in SO(U_{para})$.  We also have the following useful result:
\begin{Cor} \label{imageeven}
With the notation as above, if $x_{1}, x_{2} \in U_{para}$ are non-isotropic, then $\rho_{para}(x_{1} \cdot \bar{x}_{2}) = r_{x_{1}} \circ r_{x_{2}}$.
\end{Cor}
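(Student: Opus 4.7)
The plan is to exploit that $\rho_{para}$ is a group morphism so that the desired identity reduces to two applications of Proposition~\ref{imagenoniso}, with the $\bar{x}_{2}$ factor chosen precisely to absorb the bars that appear there. First I would observe that for $x_2 \in U_{para}$ one has $q_{para}(\bar{x}_2) = \bar{x}_2 x_2 = x_2 \bar{x}_2 = q_{para}(x_2) \neq 0$, so $\bar{x}_2$ is also non-isotropic and in particular lies in $\Gamma(U_{para})$ by (the remark following) Proposition~\ref{imagenoniso}. Hence $x_1 \cdot \bar{x}_2 \in \Gamma(U_{para})$ and
\[
\rho_{para}(x_{1} \bar{x}_{2}) = \rho_{para}(x_{1}) \circ \rho_{para}(\bar{x}_{2}).
\]

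Next, for $y \in U_{para}$, I would apply Proposition~\ref{imagenoniso} twice to get
\[
\rho_{para}(x_{1}\bar{x}_{2})(y) \;=\; -\,r_{x_{1}}\!\left(\overline{-r_{\bar{x}_{2}}(\bar{y})}\right)
\;=\; r_{x_{1}}\!\left(\overline{r_{\bar{x}_{2}}(\bar{y})}\right).
\]
So the whole statement reduces to the identity
\[
\overline{r_{\bar{x}_{2}}(\bar{y})} \;=\; r_{x_{2}}(y) \qquad \text{for all } y \in U_{para}.
\]

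To establish this, I would note that the Clifford conjugation restricted to $U_{para}$ coincides with the orthogonal involution $\mathrm{id}_{F} \perp (-\mathrm{id}_{U})$ of the decomposition $U_{para} = F \perp U$, so it is an isometry of $(U_{para}, q_{para})$. For any isometry $\phi$ and any non-isotropic vector $v$, one has the standard identity $r_{\phi(v)} = \phi \circ r_{v} \circ \phi^{-1}$; applied with $\phi$ the conjugation (which is its own inverse) and $v = x_{2}$, this yields $r_{\bar{x}_{2}}(z) = \overline{r_{x_{2}}(\bar{z})}$ for all $z \in U_{para}$. Substituting $z = \bar{y}$ and then conjugating both sides gives $\overline{r_{\bar{x}_{2}}(\bar{y})} = r_{x_{2}}(y)$, as required, and the corollary follows.

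No step looks like a real obstacle; the only mild subtlety is verifying that conjugation is an isometry (equivalently, that $S_{para}$ is invariant under $\bar{\cdot}$), which I would prove either by the orthogonal decomposition $U_{para} = F \perp U$ or directly from formula~(\ref{usebelow}) together with $q_{para}(\bar{x}) = q_{para}(x)$.
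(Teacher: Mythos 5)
Your proof is correct. The paper states this corollary without proof, but your argument — factor $\rho_{para}(x_1\bar{x}_2)=\rho_{para}(x_1)\circ\rho_{para}(\bar{x}_2)$, apply Proposition~\ref{imagenoniso} twice, and cancel the two Clifford conjugations via the observation that conjugation restricted to $U_{para}$ is the isometry $\mathrm{id}_F\perp(-\mathrm{id}_U)$, which conjugates $r_{x_2}$ to $r_{\bar{x}_2}$ — is exactly the intended derivation and every step checks out.
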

It follows from this last corollary and Theorem \ref{CartanDieudonne} that the group morphism
$$\rho_{para}:\Gamma(U_{para}) \longrightarrow SO(U_{para}) $$
is surjective.  Now, the exact same argument as in the proof of Proposition \ref{kernel} shows that ${\rm ker}(\rho_{para}) = F^{\times}$.  Thus, we have a short exact sequence

\begin{equation}
1 \longrightarrow F^{\times} \longrightarrow \Gamma(U_{para}) \stackrel{\rho_{para}}{\longrightarrow} SO(U_{para}) \longrightarrow 1.
\end{equation}

\begin{Cor} \label{generatorsforpara}
The group $\Gamma(U_{para})$ is generated by the non-isotropic vector $x \in U_{para}$.
\end{Cor}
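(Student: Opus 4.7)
The plan is to mimic the proof strategy of Corollary \ref{clifford_generate}, but replacing the role of the ordinary Clifford representation with $\rho_{para}$ and using the even-parity analogue given by Corollary \ref{imageeven}. The key point is that $\rho_{para}$ lands in $SO(U_{para})$ rather than the full orthogonal group, so by Cartan--Dieudonn\'{e} any element in the image decomposes into an \emph{even} number of hyperplane reflections on $U_{para}$, and Corollary \ref{imageeven} realizes each such pair as $\rho_{para}$ of a product of two non-isotropic paravectors.

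More concretely, I would proceed as follows. Fix $x \in \Gamma(U_{para})$ and set $\sigma = \rho_{para}(x) \in SO(U_{para})$. Since $(U_{para},S_{para})$ is a non-singular orthogonal geometry, Theorem \ref{CartanDieudonne} provides non-isotropic paravectors $y_{1},\ldots,y_{k}$ with $\sigma = r_{y_{1}} \circ \ldots \circ r_{y_{k}}$; because $\det(\sigma) = 1$, the integer $k = 2m$ is even. Grouping consecutive pairs and applying Corollary \ref{imageeven} yields
\[
\sigma = (r_{y_{1}} \circ r_{y_{2}}) \circ \ldots \circ (r_{y_{2m-1}} \circ r_{y_{2m}}) = \rho_{para}(y_{1} \bar{y}_{2}) \cdots \rho_{para}(y_{2m-1}\bar{y}_{2m}) = \rho_{para}(y_{1} \bar{y}_{2} \cdots y_{2m-1} \bar{y}_{2m}).
\]
By the short exact sequence
\[
1 \longrightarrow F^{\times} \longrightarrow \Gamma(U_{para}) \stackrel{\rho_{para}}{\longrightarrow} SO(U_{para}) \longrightarrow 1
\]
established just above, there exists $\lambda \in F^{\times}$ with $x = \lambda \cdot y_{1} \bar{y}_{2} \cdots y_{2m-1} \bar{y}_{2m}$.

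To finish, I need to observe two routine facts. First, paravectors are preserved by Clifford conjugation: if $y = \mu + u$ with $\mu \in F$ and $u \in U$, then $\bar{y} = \mu - u \in U_{para}$, and $q_{para}(\bar{y}) = \bar{y} y = q_{para}(y) \neq 0$, so each $\bar{y}_{i}$ is itself a non-isotropic paravector. Second, any non-zero scalar $\lambda \in F^{\times} \subseteq U_{para}$ is a non-isotropic paravector since $q_{para}(\lambda) = \lambda^{2} \neq 0$. Hence $x$ has been exhibited as a product of non-isotropic paravectors, which is what is required.

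I do not anticipate any real obstacle; the only subtlety is remembering to incorporate the scalar factor $\lambda$ from the kernel of $\rho_{para}$ into the factorization, which is handled by treating $\lambda$ itself as a non-isotropic paravector of length $1$.
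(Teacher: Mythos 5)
Your argument is correct and is essentially the approach implicit in the paper: the corollary appears without an explicit proof immediately after the short exact sequence for $\rho_{para}$, and the intended reasoning is exactly your combination of Cartan--Dieudonn\'{e} (Theorem \ref{CartanDieudonne}), Corollary \ref{imageeven}, and the identification $\ker(\rho_{para}) = F^{\times}$. The two bookkeeping observations you add to close the argument, namely that $\bar{y}$ is again a non-isotropic paravector whenever $y$ is, and that scalars in $F^{\times}$ are themselves non-isotropic paravectors, are both correct and are precisely what is needed.
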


We get a group morphism $N_{para}:\Gamma(U_{para}) \longrightarrow F^{\times}$ defined by $x \mapsto N_{para}(x) = x \bar{x}$.  We are now in the setting of section \S \ref{abstractspin}, and we define $Spin^{+}(U_{para}) = Spin^{+}(\rho_{para},N_{para})$.  As before, we have the exact sequence
$$1 \longrightarrow \{\pm 1\} \longrightarrow Spin^{+}(U_{para}) \stackrel{\rho_{para}}{\longrightarrow} SO(U_{para}) \stackrel{\vartheta}{\longrightarrow} F^{\times}/F^{\times 2}.$$
From now on, we let $SO^{+}(U_{para}) = \rho_{para}(Spin^{+}(U_{para}))$.

Following \cite{Po95}, we will now explain a connection between $\Gamma(U_{para})$ and $\Gamma^{0}(V)$ for a suitable orthogonal geometry $V$.  Let $L$ be a non-singular line with basis, say $e$.  Assume that the orthogonal geometry on $L$ is given by $S_{L}(\lambda_{1}e,\lambda_{2}e) = \lambda_{1}\lambda_{2}$ so that the associated quadratic form is given by $q_{L}(\lambda e) = \lambda^{2}$.  Then, consider $V = U \perp L$.  We clearly have an $F$-linear morphism $\xi:U \longrightarrow \mathcal{C}^{0}(V)$, given by $v \mapsto e v$.  Moreover, $(ev)^{2} = -q(v)$ and therefore, we get a morphism of $F$-algebras $\xi:\mathcal{C}(U) \longrightarrow \mathcal{C}^{0}(V)$ which we denote by the same symbol.  It is simple to check that if $x \in \mathcal{C}(U)$ is written as $x = x_{0} + x_{1}$ for some unique $x_{i} \in \mathcal{C}^{i}(U)$, then we have
\begin{equation} \label{fund}
\xi(x) = x_{0} + e x_{1}. 
\end{equation}
It follows that $\xi$ is an isomorphism of $F$-algebras.  Moreover, one also obtains from (\ref{fund}) the equality
\begin{equation} \label{fundeq}
e \cdot \xi(x') = \xi(x) \cdot e, 
\end{equation}
for all $x \in \mathcal{C}(U)$.  We have an obvious isometry $\sigma: U_{para} \longrightarrow V$ defined by $\lambda +u \mapsto \lambda e + u$.  A simple computation shows that
\begin{equation} \label{fundeq2}
e\cdot \xi(y') = \sigma(y) \text{ for }y \in U_{para}.
\end{equation}

\begin{Lem} \label{inordertocommute}
For $x \in \Gamma(U_{para})$ and $y \in U_{para}$ with $v = \sigma(y)$, one has $\xi(x) \cdot v \cdot \xi(x)^{-1} = \sigma(xy(x')^{-1})$.
\end{Lem}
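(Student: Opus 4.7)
The plan is to push the computation through formulas (\ref{fundeq}) and (\ref{fundeq2}) in a direct algebraic manner, using that $\xi$ is an isomorphism of $F$-algebras (and so in particular multiplicative on inverses) and that the principal involution $'$ on $\mathcal{C}(U)$ is an involutive algebra automorphism.

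First I would unpack the right-hand side via (\ref{fundeq2}). Since $x \in \Gamma(U_{para})$ and $y \in U_{para}$, by definition $z := x y (x')^{-1}$ lies in $U_{para}$, so (\ref{fundeq2}) applies and gives
\begin{equation*}
\sigma\bigl(xy(x')^{-1}\bigr) \;=\; \sigma(z) \;=\; e \cdot \xi(z').
\end{equation*}
Because $'$ is an algebra automorphism of $\mathcal{C}(U_{para})$ that squares to the identity, we have $((x')^{-1})' = x^{-1}$, hence $z' = x' y' x^{-1}$. So it suffices to show
\begin{equation*}
\xi(x) \cdot v \cdot \xi(x)^{-1} \;=\; e \cdot \xi(x' y' x^{-1}).
\end{equation*}

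For the left-hand side, I would rewrite $v = \sigma(y) = e \cdot \xi(y')$ by (\ref{fundeq2}), giving
\begin{equation*}
\xi(x) \cdot v \cdot \xi(x)^{-1} \;=\; \xi(x) \cdot e \cdot \xi(y') \cdot \xi(x)^{-1}.
\end{equation*}
Now the key move is to slide $e$ past $\xi(x)$ using (\ref{fundeq}), which reads $e \cdot \xi(x') = \xi(x) \cdot e$, i.e. $\xi(x) \cdot e = e \cdot \xi(x')$. Applying this and using that $\xi$ is an algebra morphism (so $\xi(x')\xi(y')\xi(x)^{-1} = \xi(x' y' x^{-1})$, with $\xi(x)^{-1} = \xi(x^{-1})$ since $\xi$ is an isomorphism), we get
\begin{equation*}
\xi(x) \cdot v \cdot \xi(x)^{-1} \;=\; e \cdot \xi(x') \cdot \xi(y') \cdot \xi(x^{-1}) \;=\; e \cdot \xi(x' y' x^{-1}),
\end{equation*}
which matches the right-hand side computed above.

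There is no serious obstacle: once one recognizes that (\ref{fundeq}) is precisely the commutation relation needed to ferry $e$ across $\xi(x)$ (at the cost of applying the principal involution), the whole identity is a four-line manipulation. The only bookkeeping point to be careful about is the interaction of the involution $'$ with inversion, namely $((x')^{-1})' = x^{-1}$, which is what makes the two sides of the equation line up after the commutation is performed.
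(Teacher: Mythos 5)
Your proof is correct and follows essentially the same route as the paper's one-line calculation: express $v = e\cdot\xi(y')$ via (\ref{fundeq2}), slide $e$ past $\xi(x)$ using (\ref{fundeq}), and use that $\xi$ is an algebra morphism. The only difference is that you spell out two points the paper leaves implicit — that $xy(x')^{-1}\in U_{para}$ so that (\ref{fundeq2}) is applicable to it, and that $\bigl(xy(x')^{-1}\bigr)' = x'y'x^{-1}$ — which is sound bookkeeping rather than a different argument.
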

\begin{proof}
Using (\ref{fundeq}) and (\ref{fundeq2}) above, we calculate
\begin{equation*}
\xi(x) \cdot v \cdot \xi(x)^{-1} =\xi(x)\cdot e \cdot\xi(y') \xi(x)^{-1} = e \cdot \xi(x'y'x^{-1}) = \sigma(xy(x')^{-1}).
\end{equation*}
\end{proof}
It follows from Lemma \ref{inordertocommute} that
$\xi(\Gamma(U_{para})) \subseteq \Gamma^{0}(V).$
If we let $\Sigma:SO(U_{para}) \stackrel{\simeq}{\longrightarrow} SO(V)$ be the isomorphism defined by $g \mapsto \sigma \cdot g \cdot \sigma^{-1}$, then it also follows from Lemma \ref{inordertocommute} that we have the commutative diagram
\begin{equation} \label{unyep}
  \begin{CD}
                 1 @>>> F^{\times} @>>> \Gamma(U_{para})       @>{\rho_{para}}>> SO(U_{para}) @>>>1 \\
                 & & @VV{\|}V  @VV{\xi}V       @VV{\Sigma}V   \\
                 1 @>>> F^{\times} @>>> \Gamma^{0}(V)       @>{\rho}>> SO(V) @>>> 1,
        \end{CD}
\end{equation}  
whose rows are exact and the vertical maps are all isomorphisms of groups.

\subsection{Vahlen groups for paravectors} \label{vahlenpara}
Let $U$ be a non-singular orthogonal geometry and let $\mathcal{C}(U)$ be a universal Clifford algebra for $U$.  As before, we let $W = U \perp P$, where $P$ is a hyperbolic plane.  We have the isomorphism of $F$-algebras $\phi:\mathcal{C}(W) \stackrel{\simeq}{\longrightarrow} M_{2}(\mathcal{C}(U))$ which leads us to the following definition.
\begin{Def}
With the notation as above, we define the following subgroup of $M_{2}(\mathcal{C}(U))^{\times}$:
$$\mathcal{V}(U_{para}) = \phi(\Gamma(W_{para})).$$
\end{Def}
We also let $H_{2}(U_{para}) = \phi(W_{para})$, that is
$$H_{2}(U_{para}) = \left\{X=\begin{pmatrix}x & \lambda_{1} \\ \lambda_{2} & \bar{x} \end{pmatrix} \middle| \, x \in U_{para} \text{ and } \lambda_{1}, \lambda_{2} \in F \right\} \subseteq M_{2}(\mathcal{C}(U)). $$
Then $\mathcal{V}(U_{para})$ acts on $H_{2}(U_{para})$ via $A \cdot X = A X A^{\sharp}$.  We get a representation 
$$\eta: \mathcal{V}(U_{para}) \longrightarrow SO(H_{2}(U_{para})), $$
which fits into the following commutative diagram:
\begin{equation} \label{deuxyep}
  \begin{CD}
                 1    @>>>  F^{\times}      @>>>   \Gamma(W_{para})       @>{\rho_{para}}>> SO(W_{para}) @>>> 1  \\
                 & & @|   @VV{\phi}V       @VV{\Phi}V   \\
                 1    @>>>   F^{\times}      @>>>        \mathcal{V}(U_{para})       @>{\eta}>> SO(H_{2}(U_{para})) @>>> 1 ,
        \end{CD}
\end{equation}  
whose vertical arrows are isomorphisms of groups.  For the exact same reason as in the non-paravector situation, we have the following more concrete characterization of Vahlen groups.
\begin{The} \label{vahlenpara1}
Let 
\begin{equation*}
A =
\begin{pmatrix}
a & b \\
c & d
\end{pmatrix} \in M_{2}(\mathcal{C}(U)).
\end{equation*}
Then $A \in \mathcal{V}(U_{para})$ if and only if the following conditions are satisfied.
\begin{enumerate}
\item $ad^{*} - bc^{*} = d^{*}a - b^{*}c = \lambda\in F^{\times}$, \label{un1}
\item $ba^{*} - ab^{*} = cd^{*} - dc^{*} = 0$, \label{deux1}
\item $a^{*}c - c^{*}a = d^{*}b - b^{*}d = 0$, \label{trois1}
\item $a \bar{a}, b \bar{b}, c \bar{c}, d \bar{d} \in F$, \label{quatre1}
\item $b \bar{d}, a \bar{c} \in U_{para}$, \label{cinq1}
\item $ax\bar{b} + b\bar{x}\bar{a}, cx\bar{d} + d\bar{x}\bar{c} \in F$ for all $x \in U_{para}$, \label{six1}
\item $ax\bar{d} + b\bar{x} \bar{c} \in U_{para}$ for all $x \in U_{para}$. \label{sept1}
\end{enumerate}
\end{The}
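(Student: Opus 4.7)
The plan is to mirror the proof of Theorem \ref{vahlenexpand} step for step, with the vector space $V$ replaced throughout by the paravector space $U_{para} = F \oplus U$. The starting point is the definitional identity $\mathcal{V}(U_{para}) = \phi(\Gamma(W_{para}))$ together with $H_{2}(U_{para}) = \phi(W_{para})$: since $\phi$ is an algebra isomorphism $\mathcal{C}(W) \stackrel{\simeq}{\to} M_{2}(\mathcal{C}(U))$ that intertwines $',^{*},\bar{\;}$ with $\alpha,\beta,\gamma$ by Lemma \ref{alpbetgam}, membership $A \in \mathcal{V}(U_{para})$ is equivalent to $A \in M_{2}(\mathcal{C}(U))^{\times}$ together with the stability $A \cdot H_{2}(U_{para}) \cdot \alpha(A)^{-1} \subseteq H_{2}(U_{para})$.

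For the forward direction, Corollary \ref{generatorsforpara} expresses any lift $x \in \Gamma(W_{para})$ of $A$ as a product of non-isotropic paravectors, so $N_{para}(x) = x\bar{x} \in F^{\times}$; transporting through $\phi$ yields $A \gamma(A) = \lambda I_{2}$ with $\lambda \in F^{\times}$, and Lemma \ref{invertible} then reads off conditions (\ref{un1}), (\ref{deux1}) and (\ref{trois1}). To extract (\ref{quatre1})--(\ref{sept1}), I would test the stability condition against an $F$-spanning set of $H_{2}(U_{para})$, namely the images $E_{12} = \phi(f_{1})$, $E_{21} = \phi(f_{2})$, and $\phi(x) = \bigl(\begin{smallmatrix} x & 0 \\ 0 & \bar{x} \end{smallmatrix}\bigr)$ as $x$ runs over $U_{para}$. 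Expanding $A B \alpha(A)^{-1} = \lambda^{-1} A B \beta(A)$ and demanding that the result have the shape $\bigl(\begin{smallmatrix} y & \mu_{1} \\ \mu_{2} & \bar{y} \end{smallmatrix}\bigr)$ with $y \in U_{para}$ and $\mu_{1}, \mu_{2} \in F$ produces exactly the remaining four conditions; the $I_{2}$ case (peculiar to the paravector setting) is already absorbed into (\ref{six1}) and (\ref{sept1}) by specializing $x = 1$.

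For the converse, assume (\ref{un1})--(\ref{sept1}). Conditions (\ref{un1})--(\ref{trois1}) together with Lemma \ref{invertible} give $A \in M_{2}(\mathcal{C}(U))^{\times}$ with $A^{-1} = \lambda^{-1} \gamma(A)$, hence $\alpha(A)^{-1} = \lambda^{-1} \beta(A)$. A direct computation of $A B \alpha(A)^{-1}$ on the spanning set above then shows, using (\ref{quatre1})--(\ref{sept1}) and the anti-involutive identity $\overline{uv} = \bar{v}\bar{u}$ to match the $(2,2)$-entry with the Clifford conjugate of the $(1,1)$-entry, that the product lies in $H_{2}(U_{para})$. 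By $F$-linearity, $A \cdot H_{2}(U_{para}) \cdot \alpha(A)^{-1} \subseteq H_{2}(U_{para})$, and therefore $A \in \mathcal{V}(U_{para})$ as claimed.

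The only delicate point is bookkeeping: one must check that the $(2,2)$-entry of $A B \alpha(A)^{-1}$ automatically equals the Clifford conjugate of the $(1,1)$-entry for every choice of $B$ in the spanning set, so that no hidden extra constraint appears beyond (\ref{un1})--(\ref{sept1}). This is a routine consequence of the anti-automorphism property of $\bar{\;}$ together with the symmetry of conditions (\ref{un1})--(\ref{trois1}) under the involution $\gamma$; I expect no conceptual obstacle, only careful matrix multiplication of the sort already exhibited in the proof of Theorem \ref{vahlenexpand}.
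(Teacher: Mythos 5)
Your proof is correct and follows the paper's approach, which simply declares the result to hold ``for the exact same reason as in the non-paravector situation'' (i.e.\ by repeating the argument of Theorem~\ref{vahlenexpand}); you have filled in that expansion explicitly and correctly isolated the only new features of the paravector setting, namely that the $(2,2)$-entry of $AB\alpha(A)^{-1}$ is automatically the Clifford conjugate of the $(1,1)$-entry (so no hidden eighth condition appears) and that the $B=I_{2}$ case is subsumed in (\ref{six1})--(\ref{sept1}) at $x=1$. One minor citation slip: Lemma~\ref{invertible} does not ``read off'' (\ref{un1})--(\ref{trois1}); those come from expanding $A\gamma(A)=\gamma(A)A=\lambda I_{2}$ entrywise, and Lemma~\ref{invertible} is then what supplies $A^{-1}=\lambda^{-1}\gamma(A)$, hence $\alpha(A)^{-1}=\lambda^{-1}\beta(A)$.
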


We also have in this context a group morphism $N:\mathcal{V}(U_{para}) \longrightarrow F^{\times}$ given by $A \mapsto N(A) = A \cdot \gamma(A)$.  This puts us in the context of \S \ref{abstractspin}, and we let
$$S\mathcal{V}^{+}(U_{para}) = Spin^{+}(\eta,N).$$
More concretely, we have the following result:
\begin{The}
One has $A \in S\mathcal{V}^{+}(U_{para})$ if and only if (\ref{deux1}) through (\ref{sept1}) of Theorem \ref{vahlenpara1} are satisfied and (\ref{un1}) is replaced by $ad^{*} - bc^{*} = d^{*}a - b^{*}c = 1 $.
\end{The}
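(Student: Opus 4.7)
The plan is to unpack the definition $S\mathcal{V}^+(U_{para}) = Spin^+(\eta, N) = \ker(N)$ from \S \ref{abstractspin}, where $N: \mathcal{V}(U_{para}) \to F^\times$ sends $A \mapsto A \cdot \gamma(A)$ (viewed as a scalar multiple of $I_2$ once we know $A \in \mathcal{V}(U_{para})$). By Theorem \ref{vahlenpara1}, membership in $\mathcal{V}(U_{para})$ is already equivalent to conditions (\ref{un1})--(\ref{sept1}) holding for some $\lambda \in F^\times$, so the task reduces to showing that $N(A)$ equals precisely this $\lambda$; the characterization of $S\mathcal{V}^+(U_{para})$ then follows by replacing ``some $\lambda$'' with ``$\lambda = 1$''.

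First I would compute the product directly:
\[
A \cdot \gamma(A) = \begin{pmatrix}a & b\\ c & d\end{pmatrix}\begin{pmatrix}d^* & -b^*\\ -c^* & a^*\end{pmatrix} = \begin{pmatrix}ad^* - bc^* & ba^* - ab^*\\ cd^* - dc^* & da^* - cb^*\end{pmatrix}.
\]
Condition (\ref{deux1}) makes the off-diagonal entries vanish. For the bottom-right entry, since $*$ is an anti-automorphism with $(x^*)^* = x$, we have $(ad^* - bc^*)^* = da^* - cb^*$; combined with (\ref{un1}), which gives $ad^* - bc^* = \lambda \in F^\times$ and the fact that $*$ fixes $F$, this forces $da^* - cb^* = \lambda$ also. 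Hence $A \cdot \gamma(A) = \lambda \cdot I_2$, so $N(A) = \lambda$.

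Both directions of the theorem then follow at once. If $A \in S\mathcal{V}^+(U_{para})$, then $A \in \mathcal{V}(U_{para})$ forces conditions (\ref{un1})--(\ref{sept1}) by Theorem \ref{vahlenpara1}, and $N(A) = 1$ forces $\lambda = 1$. Conversely, if (\ref{deux1})--(\ref{sept1}) hold along with $ad^* - bc^* = d^*a - b^*c = 1$, then $A \in \mathcal{V}(U_{para})$ by Theorem \ref{vahlenpara1}, and the computation above gives $N(A) = 1$, so $A \in \ker(N) = S\mathcal{V}^+(U_{para})$.

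The main subtlety, distinguishing this from the non-paravector analog Theorem \ref{vahlenfurther}, is that no additional even/odd parity condition on $a, b, c, d$ is required. In the non-paravector setting, one must intersect with $\mathcal{C}^0$ to pass from $\Gamma(W)$ mapping to $O(W)$ down to $\Gamma^0(W)$ mapping to $SO(W)$. In the paravector setting, however, Proposition \ref{imagenoniso} and Corollary \ref{imageeven} ensure that $\eta$ already lands in $SO(H_2(U_{para}))$, so the entire group $\mathcal{V}(U_{para})$ is of ``spin type'' and no further parity restriction arises.
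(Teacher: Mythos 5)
Your proof is correct and follows precisely the approach the paper leaves implicit here (and sketches for the non-paravector analogue in Theorem \ref{vahlenfurther}): compute $A\cdot\gamma(A)$, identify the resulting scalar $\lambda I_{2}$ as the spinor norm, and impose $\lambda=1$. Your closing remark also correctly pinpoints why no parity condition on $a,b,c,d$ appears here, in contrast to $S\mathcal{V}^{+}(V)$: in the paravector setting $\rho_{para}$ already maps $\Gamma(U_{para})$ onto $SO$, so there is no passage from $O$ to $SO$ requiring an intersection with $\mathcal{C}^{0}$.
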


The reader should compare our group $S\mathcal{V}^{+}(U_{para})$ to the group ${\rm SV}_{n}(F,q)$ of Definition $3.1$ of \cite{ElGrMe87}.  The analogous result to Proposition \ref{imagenoniso} is given by the following proposition:
\begin{Pro} \label{formula_for_reflections_para}
Let $X,Y \in H_{2}(U_{para})$ with $X$ non-isotropic.  Then $\eta(X)(Y) = -r_{X}(\gamma(Y))$.
\end{Pro}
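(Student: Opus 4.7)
The plan is to transport the claimed identity back to the setting of the paravector Clifford group $\Gamma(W_{para})$ via the algebra isomorphism $\phi:\mathcal{C}(W)\stackrel{\simeq}{\longrightarrow}M_{2}(\mathcal{C}(U))$, apply Proposition \ref{imagenoniso} there, and push the resulting formula forward through $\phi$.

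First I would write $X = \phi(w)$ and $Y = \phi(w')$ for the unique paravectors $w, w' \in W_{para}$, which is possible because $H_{2}(U_{para}) = \phi(W_{para})$ by definition. Since $\phi$ restricts to an isometry $W_{para} \stackrel{\simeq}{\longrightarrow} H_{2}(U_{para})$, the hypothesis that $X$ is non-isotropic is equivalent to $w$ being non-isotropic, so $w \in \Gamma(W_{para})$. The commutative diagram (\ref{deuxyep}) then gives
\[ \eta(X)(Y) \;=\; \Phi(\rho_{para}(w))(Y) \;=\; \phi\bigl(\rho_{para}(w)(w')\bigr). \]

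Next I would apply Proposition \ref{imagenoniso} (with the role of $U_{para}$ played by $W_{para}$), which yields $\rho_{para}(w)(w') = -r_{w}(\bar{w}')$, the bar being the Clifford conjugation in $\mathcal{C}(W)$. Lemma \ref{alpbetgam} ensures that $\phi(\bar{w}') = \gamma(\phi(w')) = \gamma(Y)$, and the isomorphism $\Phi$ sends $r_{w}$ to $r_{\phi(w)} = r_{X}$ because it is realised as conjugation by the isometry $\phi|_{W_{para}}$. Combining these observations,
\[ \phi\bigl(r_{w}(\bar{w}')\bigr) \;=\; \Phi(r_{w})\bigl(\phi(\bar{w}')\bigr) \;=\; r_{X}(\gamma(Y)), \]
so $\eta(X)(Y) = -r_{X}(\gamma(Y))$, as claimed.

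The only point requiring care is the compatibility between the Clifford conjugation on $\mathcal{C}(W)$ and the anti-involution $\gamma$ on $M_{2}(\mathcal{C}(U))$; that compatibility is precisely the content of Lemma \ref{alpbetgam}, so it is bookkeeping rather than a genuine obstacle. The remainder of the argument is just transport of structure through the commutative diagram (\ref{deuxyep}).
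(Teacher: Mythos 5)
Your proof is correct. The paper's own proof simply states that the argument is ``exactly the same as the one for Proposition~\ref{imagenoniso},'' i.e.\ a direct computation of $XYX^{\sharp}=XY\alpha(X)^{-1}$ inside $M_{2}(\mathcal{C}(U))$ mirroring the calculation already done in $\mathcal{C}(U)$ for paravectors; you instead transport the already-proved Proposition~\ref{imagenoniso} (applied to $W_{para}\subseteq\mathcal{C}(W)$) across the isomorphism $\phi$ using diagram~(\ref{deuxyep}) and Lemma~\ref{alpbetgam}, which is the same idea made explicit as a transport-of-structure argument rather than a repeated computation.
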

\begin{proof}
The proof is exactly the same as the one for Proposition \ref{imagenoniso}.
\end{proof}
As a corollary, we obtain the following result.
\begin{Cor} \label{formulaforgen}
Given $X_{1},X_{2} \in H_{2}(U_{para})$, both non-isotropic, then $\eta(X_{1} \cdot \gamma(X_{2})) = r_{X_{1}} \circ r_{X_{2}}$.
\end{Cor}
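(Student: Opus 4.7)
The plan is to compute $\eta(X_1 \cdot \gamma(X_2))(Y)$ directly for arbitrary $Y \in H_2(U_{para})$ by applying Proposition \ref{formula_for_reflections_para} twice and then simplifying using the involutive-isometry properties of $\gamma$. A brief preliminary is needed to ensure that $\gamma(X_2) \in \mathcal{V}(U_{para})$: by Lemma \ref{alpbetgam}, $\gamma$ corresponds under $\phi$ to Clifford conjugation on $\mathcal{C}(W)$, which restricted to the paravectors $W_{para} = F \oplus W$ is the map $\lambda + w \mapsto \lambda - w$; hence $\gamma$ preserves $H_2(U_{para})$ together with its quadratic form $Q$, so $\gamma(X_2)$ is non-isotropic and therefore lies in $\mathcal{V}(U_{para})$ by the analogue of Corollary \ref{generatorsforpara} transported through $\phi$ (applied with $W$ in place of $U$).

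Using that $\eta$ is a group morphism and applying Proposition \ref{formula_for_reflections_para} twice then gives
\begin{equation*}
\eta(X_1 \cdot \gamma(X_2))(Y) = \eta(X_1)\bigl(-r_{\gamma(X_2)}(\gamma(Y))\bigr) = r_{X_1}\bigl(\gamma(r_{\gamma(X_2)}(\gamma(Y)))\bigr),
\end{equation*}
where the two sign flips from the $-1$ prefactors combine to $+1$. It remains to establish the identity $\gamma \circ r_{\gamma(X_2)} \circ \gamma = r_{X_2}$. This follows from the general conjugation rule $\tau \circ r_Z \circ \tau^{-1} = r_{\tau(Z)}$ valid for any isometry $\tau$, applied with $\tau = \gamma$ and $Z = \gamma(X_2)$, together with the easy fact that $\gamma^2 = \mathrm{id}$ (Clifford conjugation is an involution on $\mathcal{C}(W)$). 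Substituting this into the displayed equation yields $\eta(X_1 \cdot \gamma(X_2))(Y) = r_{X_1}(r_{X_2}(Y))$, which is exactly the claim.

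The only real technical point is the dovetailing of the $-1$ factors in Proposition \ref{formula_for_reflections_para} with the $\gamma$-twists: both effects cancel in precisely the right way when two reflections are composed, and this is why the product on the left-hand side is forced to contain $\gamma(X_2)$ rather than $X_2$. There is no serious obstacle; the argument is the direct paravector analogue of the step from Proposition \ref{imagenoniso} to Corollary \ref{imageeven}, and once the bookkeeping with $\gamma$ is in place the result is immediate.
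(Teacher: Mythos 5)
Your proof is correct and follows the natural route the paper itself indicates when it labels the statement a corollary of Proposition \ref{formula_for_reflections_para}: apply that proposition twice, use that $\gamma$ restricted to $H_{2}(U_{para})$ is an involutive isometry (the $\phi$-conjugate of Clifford conjugation on $W_{para}$), and invoke the conjugation rule for reflections to cancel the $\gamma$-twists and the two signs. The preliminary check that $\gamma(X_{2}) \in \mathcal{V}(U_{para})$ is exactly the point that needs noting, and you handle it correctly; the paper gives no explicit argument, but this is the direct paravector analogue of passing from Proposition \ref{imagenoniso} to Corollary \ref{imageeven}.
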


Combining (\ref{unyep}) and (\ref{deuxyep}), we obtain the following commutative diagram 
\begin{equation} 
  \begin{CD}
                 1    @>>>  F^{\times}      @>>>   \mathcal{V}(U_{para})       @>{\eta}>> SO(H_{2}(U_{para})) @>>> 1  \\
                 & & @|   @VV{\Xi}V       @VV{\Sigma}V   \\
                 1    @>>>   F^{\times}      @>>>        \mathcal{V}^{0}(V)       @>{\eta}>> SO(H_{2}(V) @>>> 1 ,
        \end{CD}
\end{equation}  
whose vertical arrows are all isomorphisms of groups and where $V = U \perp L$ and $L =Fe$ is a line whose quadratic form is given by $q(e) = 1$.  The isometry $\sigma:U_{para} \longrightarrow V$ induces another obvious isometry $H_{2}(U_{para}) \longrightarrow H_{2}(V)$ which in turns induces the group isomorphism $\Sigma$ of this last commutative diagram.  The group isomorphism $\Xi$ comes from the group isomorphism $\xi$ of (\ref{fund}) and is explicitly given by the following formula.  If 
\begin{equation*}
A = 
\begin{pmatrix}
a & b \\
c & d
\end{pmatrix}
\in \mathcal{V}(U_{para})
\qquad
\hbox{then}
\qquad
\Xi(A)=
\begin{pmatrix}
g_{1}a + g_{2}a' & g_{1}b - g_{2}b' \\
g_{2}c - g_{1}c' & g_{2}d + g_{1}d'
\end{pmatrix},
\end{equation*}
where
$$g_{1} = \frac{1 + e}{2} \text{ and } g_{2} = \frac{1-e}{2}. $$
Note that $g_{i} \in \mathcal{C}(V)$ and $g_{1}' = g_{2}$ and $g_{2}' = g_{1}$.

One difference between our approach and the one adopted in \cite{FeKlNi09} is that we work in a non-paravector situation whereas they work in a paravector situation.  We shall explain this further for $A_{1}^{++}$ and $A_{2}^{++}$ below.

\subsubsection{$A_{1}^{++}$}
In this section, we let $F = \mathbb{Q}$ and we consider $V = \mathbb{Q}$ as an orthogonal geometry as we did in \S \ref{symmetric} for the Cartan matrix of finite type $C = (2)$ corresponding to $A_{1}$.  Note that $V^{\frac{1}{2}}$ is isometric to the line $L  =\mathbb{Q} e$, whose quadratic form is given by $q(e) = 1$.  Letting $U = \{0 \}$ with the trivial quadratic form, we have $V^{\frac{1}{2}} = U \perp L$.  Note that $\mathcal{C}(U)$ is isomorphic to $\mathbb{Q}$ as $\mathbb{Q}$-algebras and $U_{para} = \mathcal{C}(U) = \mathbb{Q}$.  It is simple to check that
\begin{enumerate}
\item $\mathcal{V}(U_{para}) = GL(2,\mathbb{Q})$,
\item $S\mathcal{V}^{+}(U_{para}) = SL(2,\mathbb{Q})$.
\end{enumerate}
The $\mathbb{Q}$-algebra $\mathcal{C}(V^{\frac{1}{2}})$ is isomorphic to $\mathbb{Q}(i)$ and the embedding $V^{\frac{1}{2}} = L \hookrightarrow \mathcal{C}(V^{\frac{1}{2}}) = \mathbb{Q}(i)$ is given by $xe \mapsto xi$. Moreover, one can check that
\begin{equation*}
\mathcal{V}^{0}(V^{\frac{1}{2}}) = \left \{\begin{pmatrix} a & b \\ c & d \end{pmatrix} \in GL(2,\mathbb{Q}(i)) \, \Big|\,  a,d \in \mathbb{Q} \text{ and } b,c \in \mathbb{Q}i \right \}.
\end{equation*}
The group isomorphism $\Xi:GL(2,\mathbb{Q}) \longrightarrow \mathcal{V}^{0}(V^{\frac{1}{2}})$ of the last section is given explicitly by
\begin{equation*}
\Xi \left(\begin{pmatrix} a & b \\ c & d \end{pmatrix} \right) = \begin{pmatrix} a & ib \\ -ic & d\end{pmatrix}.
\end{equation*}
Note also that the order $\mathcal{O}$ in $\mathcal{C}(V^{\frac{1}{2}}) = \mathbb{Q}(i)$ is nothing else than $\mathbb{Z}[i]$.  Therefore,
\begin{equation*}
\mathcal{SV}^{+}(\mathcal{O}) = \left \{\begin{pmatrix} a & ib \\ -ic & d \end{pmatrix} \in SL(2,\mathbb{Q}(i)) \, \Big|\,  a,b,c,d \in \mathbb{Z} \right \},
\end{equation*}
and we clearly have $\Xi\left(SL(2,\mathbb{Z}) \right) = \mathcal{SV}^{+}(\mathcal{O})$.

In summary, there are two possibilities in order to study the even part of the Weyl group of the hyperbolic Kac-Moody algebra $A_{1}^{++}$.  One can work with the short exact sequence
$$1 \longrightarrow \mathbb{Q}^{\times} \longrightarrow \mathcal{V}^{0}(V^{\frac{1}{2}}) \stackrel{\eta}{\longrightarrow} SO(H_{2}(V^{\frac{1}{2}})) \longrightarrow 1,$$
as we did or one can work with the short exact sequence
$$1 \longrightarrow \mathbb{Q}^{\times} \longrightarrow GL(2,\mathbb{Q}) \stackrel{\eta}{\longrightarrow} SO(H_{2}(\mathbb{Q})) \longrightarrow 1,$$
where
\begin{equation*}
H_{2}(\mathbb{Q}) = \left \{ \begin{pmatrix} x & \lambda_{1} \\ \lambda_{2} & x \end{pmatrix} \, \Big| \, \lambda_{1},\lambda_{2},x \in \mathbb{Q} \right \}.
\end{equation*}
In \cite{FeKlNi09}, they use the second approach.  Moreover, they are not working with $H_{2}(\mathbb{Q})$, but rather with the two by two symmetric matrices with rational coefficients.  We will explain this other option in \S \ref{parahermitian}.

\subsubsection{$A_{2}^{++}$}
In this section, we let $F = \mathbb{Q}$ and we consider $V = \mathbb{Q}^{2}$ as an orthogonal geometry as we did in \S \ref{symmetric} for the Cartan matrix of finite type 
\begin{equation*}
C = 
\begin{pmatrix}
2 & -1 \\ -1 & 2
\end{pmatrix}
\end{equation*}
corresponding to $A_{2}$.  Note that an orthogonal basis for $V^{\frac{1}{2}}$ is given by $(\beta_{1},\beta_{2})$, where $\beta_{1} = \alpha_{1}$ and $\beta_{2} = \alpha_{1} + 2\alpha_{2}$.  Note also that 
$$\frac{1}{2}\kappa(\beta_{2},\beta_{2}) = 3. $$
This leads us to consider $U = \mathbb{Q}$ with the quadratic form given by $q(x) = 3x^{2}$.  Then, we have $V^{\frac{1}{2}} = \mathbb{Q}\alpha_{1} \perp U$.  Then, the Clifford algebra $\mathcal{C}(U)$ is isomorphic as a $\mathbb{Q}$-algebra to $\mathbb{Q}(\sqrt{-3})$.  The embedding $\mathbb{Q} \hookrightarrow \mathbb{Q}(\sqrt{-3})$ is given be $x \mapsto x\sqrt{-3}$.  Moreover $U_{para} = \mathbb{Q}(\sqrt{-3})$.  It is simple to check that
\begin{enumerate}
\item $\mathcal{V}(U_{para}) = GL(2,\mathbb{Q}(\sqrt{-3})) \cap {\rm det}^{-1}(\mathbb{Q}^{\times})$,
\item $S\mathcal{V}^{+}(U_{para}) = SL(2,\mathbb{Q}(\sqrt{-3}))$.
\end{enumerate}
Now, the $\mathbb{Q}$-algebra $\mathcal{C}(V^{\frac{1}{2}})$ is isomorphic as a $\mathbb{Q}$-algebra to the quaternion algebra $\left(\frac{-1\,,\,-3}{\mathbb{Q}} \right)$, the embedding is given by $\beta_{1} \mapsto i$ and $\beta_{2} \mapsto j$ so that $i^{2}=-1$ and $j^{2} = -3$.  If $w + xi + yj +zk \in \left(\frac{-1\,,\,-3}{\mathbb{Q}} \right)$, then the (anti)-involutions are given by
$$(w + xi + yj +zk)^{*} = w + xi + yj -zk, \, (w + xi + yj +zk)^{'} = w - xi - yj +zk,$$
and
$$\overline{w + xi + yj +zk} = w - xi - yj -zk.$$
The even and odd parts of the Clifford algebra are given by
$$\left(\frac{-1 \, , \, -3}{\mathbb{Q}} \right)^{0} = \left\{w + zk \, \Big| \, w,z \in \mathbb{Q} \right\}\qquad\hbox{and}\qquad\left(\frac{-1 \, , \, -3}{\mathbb{Q}} \right)^{1} = \left\{xi + yj \, \Big| \, x,y \in \mathbb{Q} \right\}.$$
The Clifford algebra $\mathcal{C}(U)$ embeds in the Clifford algebra $\mathcal{C}(V^{\frac{1}{2}})$ via $\sqrt{-3} \mapsto j$ and therefore, we will identify $\sqrt{-3}$ with $j$.  One can check that
\begin{equation*}
\mathcal{V}^{0}(V^{\frac{1}{2}}) = \left\{\begin{pmatrix} a & b \\ c & d\end{pmatrix} \in M_{2}\left(\left(\frac{-1 \, , \, -3}{\mathbb{Q}} \right) \right)\, \Big| \, a,d \in \left(\frac{-1 \, , \, -3}{\mathbb{Q}} \right)^{0}, b,c \in \left(\frac{-1 \, , \, -3}{\mathbb{Q}} \right)^{1}, ad^{*} - bc \in \mathbb{Q}^{\times} \right\}.
\end{equation*}
Moreover, the group isomorphism $\Xi: \mathcal{V}(U_{para}) \longrightarrow \mathcal{V}^{0}(V^{\frac{1}{2}})$ is explicitly given by
\begin{equation*}
\Xi\left(\begin{pmatrix}x_{1} + y_{1}j & x_{2} + y_{2}j \\ x_{3} + y_{3}j & x_{4} + y_{4}j \end{pmatrix} \right) = \begin{pmatrix}x_{1} + y_{1}k & x_{2}i + y_{2}j \\ -x_{3}i + y_{3}j & x_{4} - y_{4}k \end{pmatrix}.
\end{equation*}
One can also check that the order $\mathcal{O} =\mathbb{Z}[\alpha_{1},\alpha_{2}] \subseteq \mathcal{C}(V^{\frac{1}{2}}) = \left(\frac{-1 \, , \, -3}{\mathbb{Q}} \right)$ is given by
$$\mathcal{O} = \left\langle 1,i,\frac{j-i}{2},\frac{k+1}{2} \right\rangle, $$
and that $\Xi(SL(2,O_{-3})) = S\mathcal{V}^{+}(\mathcal{O})$, where $O_{-3}$ is the ring of integers of the number fields $\mathbb{Q}(j)$.

So again, there are two possibilities in order to study the even part of the Weyl group of the hyperbolic Kac-Moody algebra $A_{2}^{++}.$  One can work with the short exact sequence
$$1 \longrightarrow \mathbb{Q}^{\times} \longrightarrow \mathcal{V}^{0}(V^{\frac{1}{2}}) \stackrel{\eta}{\longrightarrow} SO(H_{2}(V^{\frac{1}{2}})) \longrightarrow 1,$$
as we did or one can work with the short exact sequence
$$1 \longrightarrow \mathbb{Q}^{\times} \longrightarrow GL(2,\mathbb{Q}(\sqrt{-3}))\cap {\rm det}^{-1}(\mathbb{Q}^{\times}) \stackrel{\eta}{\longrightarrow} SO(H_{2}(\mathbb{Q}(\sqrt{-3}))) \longrightarrow 1,$$
where
\begin{equation*}
H_{2}(\mathbb{Q}(\sqrt{-3})) = \left \{ \begin{pmatrix} x & \lambda_{1} \\ \lambda_{2} & x^{j} \end{pmatrix} \, \Big| \, \lambda_{1},\lambda_{2} \in \mathbb{Q}, x \in \mathbb{Q}(\sqrt{-3}) \right \}.
\end{equation*}
In this last equation, $j$ is the unique non-trivial Galois automorphism of $\mathbb{Q}(\sqrt{-3})/\mathbb{Q}$.  In \cite{FeKlNi09}, they use the second approach.  Moreover, they are not working with $H_{2}(\mathbb{Q}(\sqrt{-3}))$, but rather with the two by two Hermitian matrices with coefficients in $\mathbb{Q}(\sqrt{-3})$.  We explain this below in \S \ref{parahermitian}.

\subsection{Hermitian matrices} \label{parahermitian}
Let $U$ be a non-singular orthogonal geometry.  It has been more customary to work with Hermitian matrices rather than $H_{2}(U_{para})$.  We now explain the connection between the two.  We explain this in the context of paravectors, but a similar phenomenon is happening in the non-paravector situation as well.  

We let
$$\widetilde{H}_{2}(U_{para}) = \left\{X =\begin{pmatrix} \lambda_{1} & x \\ \bar{x} & \lambda_{2} \end{pmatrix} \middle| \, x \in U_{para} \text{ and } \lambda_{1}, \lambda_{2} \in F \right\}\subseteq M_{2}(\mathcal{C}(U)). $$
We have an obvious isomorphism of $F$-vector spaces $\psi:H_{2}(U_{para}) \longrightarrow \widetilde{H}_{2}(U_{para})$ given by
\begin{equation*}
\begin{pmatrix}
x & \lambda_{1}\\ \lambda_{2} & \bar{x}
\end{pmatrix}
\mapsto 
\psi\left( \begin{pmatrix}
x & \lambda_{1}\\ \lambda_{2} & \bar{x}
\end{pmatrix} \right)=
\begin{pmatrix}
\lambda_{1} & x\\  \bar{x} & \lambda_{2}
\end{pmatrix}
\end{equation*}
Hence, the $F$-vector space $\widetilde{H}_{2}(U_{para})$ inherits an orthogonal geometry from $H_{2}(U_{para})$ whose corresponding quadratic form is given by $Q(X) = x\bar{x}-\lambda_{1}\lambda_{2}$.  The corresponding symmetric $F$-bilinear form will be denoted by $S$.  From now on, we let
\begin{equation*}
E_{2} =
\begin{pmatrix}
0 & 1\\
1 & 0
\end{pmatrix}.
\end{equation*}
Note that if $X \in H_{2}(U_{para})$ then $\psi(X) = XE_{2}$.  It is simple to check that $E_{2} \in \mathcal{V}(U_{para})$. 

Given
\begin{equation*}
A =
\begin{pmatrix}
a & b \\
c & d
\end{pmatrix} \in \mathcal{V}(U_{para})
\qquad
\hbox{we let}
\qquad
A^{\dagger} = \frac{1}{\lambda} 
\begin{pmatrix}
\bar{a} & \bar{c}\\
\bar{b} & \bar{d}
\end{pmatrix},
\qquad \text{where } \lambda = ad^{*} - bc^{*}.  
\end{equation*}
Note that if $A \in \mathcal{V}(U_{para})$, then $A^{\dagger} = E_{2} A^{\sharp} E_{2}$.  Since $\mathcal{V}(U_{para})$ acts on $H_{2}(U_{para})$, it also acts on $\widetilde{H}_{2}(U_{para})$ via $\psi$.  It is a simple matter to check that the action is given by $A \cdot X = AXA^{\dagger}$ whenever $A \in \mathcal{V}(U_{para})$ and $X \in \widetilde{H}_{2}(U_{para})$.  Hence, we get a representation 
$$\tilde{\eta}:\mathcal{V}(U_{para}) \longrightarrow SO(\widetilde{H}_{2}(U_{para})).$$
Moreover, the isometry $\psi$ induces an isomorphism of groups $\Psi:SO(H_{2}(U_{para})) \longrightarrow SO(\widetilde{H}_{2}(U_{para}))$ given by $\sigma \mapsto \Psi(\sigma) = \psi \circ \sigma \circ \psi^{-1}$.  We then obtain the following commutative diagram:
\begin{equation} \label{shortexact3}
  \begin{CD}
                 1    @>>>  F^{\times}      @>>>   \mathcal{V}(U_{para})       @>{\eta}>> SO(H_{2}(U_{para})) @>>> 1  \\
                 & & @|   @|       @VV{\Psi}V   \\
                 1    @>>>   F^{\times}      @>>>        \mathcal{V}(U_{para})       @>{\tilde{\eta}}>> SO(\widetilde{H}_{2}(U_{para})) @>>> 1 ,
        \end{CD}
\end{equation}  
whose rows are exact.

The reader will have no difficulty in checking that Proposition \ref{formula_for_reflections_para} is valid here as well.
\begin{Pro} \label{imp1}
Let $X,Y \in \widetilde{H}_{2}(U_{para})$ with $X$ non-isotropic, then $\tilde{\eta}(X)(Y) = - r_{X}(\gamma(Y))$.
\end{Pro}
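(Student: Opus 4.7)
The plan is to imitate the one-line computation of Proposition \ref{imagenoniso}, now written in terms of $2\times 2$ matrices, exactly as was done for Proposition \ref{formula_for_reflections_para}. Writing an arbitrary $X \in \widetilde{H}_{2}(U_{para})$ as
\[
X = \begin{pmatrix} \mu_1 & x \\ \bar{x} & \mu_2 \end{pmatrix}, \qquad x \in U_{para},\ \mu_1, \mu_2 \in F,
\]
the first step is to compute the two auxiliary objects that appear in the formula $\tilde{\eta}(X)(Y) = X Y X^{\dagger}$. A direct evaluation gives $\lambda = \mu_1 \mu_2 - x\bar{x} = -Q(X)$, whence
\[
X^{\dagger} \;=\; -\tfrac{1}{Q(X)}\, X,
\]
and an equally direct $2\times 2$ calculation, using $x\bar{x} = \bar{x} x = q_{para}(x)$, yields the crucial identity
\[
X \cdot \gamma(X) \;=\; -Q(X)\, I_{2}.
\]

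Next I would establish the Clifford-type identity that plays the role of equation (\ref{usebelow}) for matrices: for all $X, Y \in \widetilde{H}_{2}(U_{para})$,
\[
Y X + \gamma(X)\, \gamma(Y) \;=\; -2\, S(Y, \gamma(X))\, I_{2},
\]
where $S$ is the bilinear form on $\widetilde{H}_{2}(U_{para})$. This is a mechanical entry-by-entry check whose off-diagonal entries vanish and whose diagonal entries reduce to the paravector identity $y \bar{x} + x \bar{y} = 2 S_{para}(x,y)$, together with the explicit formula $S(Y, \gamma(X)) = -S_{para}(x,y) - \tfrac{1}{2}(\mu_1 \nu_1 + \mu_2 \nu_2)$ that one reads off from $Q$.

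With these two identities in hand, substitute $Y X = -\gamma(X)\gamma(Y) - 2 S(Y, \gamma(X))\, I_{2}$ into $X Y X$ and use $X \gamma(X) = -Q(X) I_{2}$ to get
\[
X Y X \;=\; Q(X)\, \gamma(Y) \;-\; 2\, S(Y, \gamma(X))\, X,
\]
so that
\[
\tilde{\eta}(X)(Y) \;=\; -\tfrac{1}{Q(X)}\, X Y X \;=\; -\gamma(Y) \;+\; \tfrac{2\, S(Y, \gamma(X))}{Q(X)}\, X.
\]
To finish, note that $\gamma$ is an involution preserving $S$ on $\widetilde{H}_{2}(U_{para})$ (checked by direct computation, or transported from $H_{2}(U_{para})$ via the isometry $\psi$), so $S(Y, \gamma(X)) = S(\gamma(Y), X)$; the right-hand side above is then exactly $-r_{X}(\gamma(Y))$ by the reflection formula.

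The only real work is verifying the Clifford-type identity and the compatibility $S(Y, \gamma(X)) = S(\gamma(Y), X)$; both are routine bookkeeping with $2\times 2$ blocks and paravector arithmetic, and the argument is structurally identical to the proof of Proposition \ref{formula_for_reflections_para} — the sign discrepancy relative to the $H_2$ setting (namely $X \gamma(X) = -Q(X) I_2$ rather than $+Q(X) I_2$, and $X^{\dagger} = -X/Q(X)$ rather than $X/Q(X)$) cancels out between the two occurrences and leaves the final statement unchanged.
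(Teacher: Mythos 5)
Your proof is correct and takes essentially the same approach the paper intends: it is a matrix-level transcription of the one-line Clifford computation in Proposition \ref{imagenoniso}, spelled out explicitly for $\widetilde{H}_{2}(U_{para})$ where the paper merely cites that the argument carries over from Proposition \ref{formula_for_reflections_para}. The sign bookkeeping (namely $X\gamma(X) = -Q(X)I_{2}$, $X^{\dagger} = -X/Q(X)$, and the identity $S(Y,\gamma(X)) = S(\gamma(Y),X)$) all checks out.
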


\begin{Cor} \label{imp2}
Given $X_{1},X_{2} \in \widetilde{H}_{2}(U_{para})$, both non-isotropic, then $\tilde{\eta}(X_{1} \cdot \gamma(X_{2})) = r_{X_{1}} \circ r_{X_{2}}$.
\end{Cor}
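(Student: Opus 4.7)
The plan is to apply Proposition \ref{imp1} twice, exploiting the fact that $\tilde{\eta}$ is a group homomorphism from $\mathcal{V}(U_{para})$ to $SO(\widetilde{H}_{2}(U_{para}))$. The preliminary step is to verify that both factors $X_{1}$ and $\gamma(X_{2})$ actually lie in $\mathcal{V}(U_{para})$. For $X_{1}$, note that the $F$-linear bijection $\psi^{-1}:\widetilde{H}_{2}(U_{para}) \to H_{2}(U_{para})$ is an isometry, so $\psi^{-1}(X_{1})$ is a non-isotropic element of $H_{2}(U_{para}) = \phi(W_{para})$, hence of the form $\phi(y)$ for some non-isotropic $y \in W_{para}$. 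By Corollary \ref{generatorsforpara} (applied to $W$ in place of $U$), $y \in \Gamma(W_{para})$, so $\psi^{-1}(X_{1}) \in \mathcal{V}(U_{para})$. Since $E_{2} \in \mathcal{V}(U_{para})$ and $X_{1} = \psi^{-1}(X_{1})\cdot E_{2}$, we conclude $X_{1} \in \mathcal{V}(U_{para})$. A direct calculation on the entries shows that $\gamma$ restricts to an $F$-linear involution of $\widetilde{H}_{2}(U_{para})$ that preserves $Q$, so $\gamma(X_{2})$ is again non-isotropic and the same argument yields $\gamma(X_{2}) \in \mathcal{V}(U_{para})$.

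With these memberships in hand, for any $Y \in \widetilde{H}_{2}(U_{para})$ the homomorphism property together with two invocations of Proposition \ref{imp1} give
\begin{equation*}
\tilde{\eta}(X_{1} \cdot \gamma(X_{2}))(Y) \;=\; \tilde{\eta}(X_{1})\bigl(\tilde{\eta}(\gamma(X_{2}))(Y)\bigr) \;=\; -r_{X_{1}}\Bigl(\gamma\bigl(-r_{\gamma(X_{2})}(\gamma(Y))\bigr)\Bigr).
\end{equation*}
Because $\gamma$ is $F$-linear the two minus signs cancel, leaving $r_{X_{1}}\bigl(\gamma(r_{\gamma(X_{2})}(\gamma(Y)))\bigr)$. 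To finish I would invoke the standard identity that if $g$ is an isometry of a non-singular orthogonal geometry and $Z$ is non-isotropic then $g \circ r_{Z} \circ g^{-1} = r_{g(Z)}$ (both sides being the reflection through the hyperplane $g(Z^{\perp}) = g(Z)^{\perp}$). Taking $g = \gamma = \gamma^{-1}$ and $Z = \gamma(X_{2})$ gives $\gamma \circ r_{\gamma(X_{2})} \circ \gamma = r_{X_{2}}$, hence $\gamma\bigl(r_{\gamma(X_{2})}(\gamma(Y))\bigr) = r_{X_{2}}(Y)$, and the whole expression collapses to $r_{X_{1}}(r_{X_{2}}(Y))$, as required.

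The only genuinely delicate point is checking that the signs coming from the two applications of Proposition \ref{imp1} interact correctly with the two applications of the anti-involution $\gamma$; this is precisely what forces the statement to feature $X_{1}\cdot\gamma(X_{2})$ rather than $X_{1}\cdot X_{2}$, and it is the same cancellation that makes Corollary \ref{imageeven} fall out of Proposition \ref{imagenoniso} in the paravector setting. Once this sign tracking is done, the rest is routine bookkeeping with the isometries $\psi$ and $\gamma$.
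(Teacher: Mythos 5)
Your proof is correct and follows the same route the paper intends: apply Proposition~\ref{imp1} twice via the homomorphism property of $\tilde{\eta}$, cancel the two signs, and use the conjugation identity $\gamma \circ r_{\gamma(X_{2})} \circ \gamma = r_{X_{2}}$ (valid since $\gamma$ restricts to an isometric involution of $\widetilde{H}_{2}(U_{para})$), mirroring exactly how Corollary~\ref{imageeven} is derived from Proposition~\ref{imagenoniso}. Your preliminary check that $X_{1}$ and $\gamma(X_{2})$ actually lie in $\mathcal{V}(U_{para})$, via the isometry $\psi$, the fact that $E_{2} \in \mathcal{V}(U_{para})$, and the observation that non-isotropic elements of $W_{para}$ lie in $\Gamma(W_{para})$, is a worthwhile piece of rigor that the paper leaves implicit.
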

We also have the group morphism $N:\mathcal{V}(U_{para}) \longrightarrow F^{\times}$ defined as before by $A \mapsto A \cdot \gamma(A)$.  Then, we obtain the important exact sequence
\begin{equation} \label{paraspinplusexact}
1 \longrightarrow \{\pm 1 \} \longrightarrow S\mathcal{V}^{+}(U_{para}) \stackrel{\tilde{\eta}}{\longrightarrow} SO(\widetilde{H}_{2}(U_{para})) \stackrel{\vartheta}{\longrightarrow} F^{\times}/F^{\times 2}.
\end{equation}

Note that $\vartheta$ satisfies $\vartheta(r_{X_{1}} \circ r_{X_{2}}) = Q(X_{1})Q(X_{2}) \cdot F^{\times 2}$ whenever $X_{1},X_{2} \in \widetilde{H}_{2}(U_{para})$ are non-isotropic vectors.

One should compare the above with \S $3.1$ and \S $3.2$ of \cite{FeKlNi09}.  There, a different formula than the one of Proposition \ref{imp1} is presented.  One can check that the matrices coming from Corollary \ref{imp2} are the same up to a sign as the ones of Theorem $2$ of \cite{FeKlNi09}.  Our treatment here is simplified a little, since we are using the anti-involution $\gamma$ instead of $X \mapsto \bar{X}$ which is neither an involution nor an anti-involution in general, unless we are in a commutative setting.

In summary, if $V$ is one of the orthogonal geometries of \S \ref{symmetric}, and one writes $V^{\frac{1}{2}} = L \perp U$ for some orthogonal geometry $U$, where $L = \mathbb{Q}e$ is a line with quadratic form $q(e) = 1$, then the authors of \cite{FeKlNi09} are working with the short exact sequence
$$1 \longrightarrow \mathbb{Q}^{\times} \longrightarrow \mathcal{V}(U_{para}) \stackrel{\tilde{\eta}}{\longrightarrow} SO(\widetilde{H}_{2}(U_{para})) \longrightarrow 1,$$
whereas we are rather working with the following short exact sequence
$$1 \longrightarrow \mathbb{Q}^{\times} \longrightarrow \mathcal{V}^{0}(V^{\frac{1}{2}}) \stackrel{\eta}{\longrightarrow} SO(H_{2}(V^{\frac{1}{2}})) \longrightarrow 1.$$
Our approach has the advantage that we can study directly the Weyl group instead of only the even part of the Weyl group.  Moreover, the formulas giving the reflections are simpler in our situation.  To be precise, the authors of \cite{FeKlNi09} also work over $\mathbb{R}$ instead of $\mathbb{Q}$, but it seems to clarify the picture quite a bit if one works over $\mathbb{Q}$ instead of over $\mathbb{R}$.  As a last remark, we also point out that we are not dealing with an explicit set of known generators for the groups $\mathcal{V}(V^{\frac{1}{2}})$ as was done in \cite{FeKlNi09} for $SL(2,\mathbb{Z})$ and $SL(2,O_{-3})$ for instance.  We are replacing this argument with the use of a corollary of Kac as we explained in the proof of Theorem \ref{main}.

\end{document}